\documentclass[11pt,a4paper,twoside]{article}
\usepackage{graphics}
\usepackage{amssymb}
\usepackage{amsmath}
\usepackage{mathrsfs}
\usepackage{makeidx}
\usepackage{color}
\usepackage{graphicx}
\usepackage{subfigure}
\usepackage{multicol}
\usepackage{multirow}
\usepackage{float}
\usepackage{booktabs}
\usepackage{braket}
\usepackage[colorlinks,
            linkcolor=red,
            anchorcolor=blue,
            citecolor=blue
            ]{hyperref}
\usepackage{caption}
\usepackage{algorithm}
\usepackage{algorithmic}
\usepackage{latexsym, cite, bm, amsthm, amsmath}
\usepackage{amsthm}
\usepackage{marginnote}
\usepackage{mathrsfs}
\usepackage{epsfig}
\usepackage{epstopdf}
\usepackage[T1]{fontenc}

\hoffset=0truemm    \voffset=0truemm  \topmargin=0truemm
\oddsidemargin=0truemm    \evensidemargin=0truemm
\textheight=226truemm       \textwidth=158truemm
\newtheorem{thm}{Theorem}[section]

\newtheorem{lem}{Lemma}[section]
\newtheorem{prop}{Proposition}[section]
\newtheorem{defn}{Definition}[section]

\newtheorem{rem}{Remark}[section]

\newtheorem{assu}{Assumption}[section]
\newtheorem{exam}{Example}[section]
\numberwithin{equation}{section}

\DeclareMathOperator{\diag}{diag}
\DeclareMathOperator{\sgn}{sgn}
\DeclareMathOperator{\Res}{Res}
\DeclareMathOperator{\Iter}{Iter}
\DeclareMathOperator{\Cpu}{Cpu}
\DeclareMathOperator{\co}{co}
\DeclareMathOperator{\vecv}{vec}

\title{A non-monotone smoothing Newton algorithm for solving the system of generalized absolute value equations}

\usepackage[marginal]{footmisc}
\usepackage{authblk}
\author[a]{Cairong Chen\thanks{Supported partially by the National Natural Science Foundation of China (Grant No. 11901024) and  the Natural Science Foundation of Fujian Province (Grand No. 2021J01661). Email address: cairongchen@fjnu.edu.cn.}}
\author[b]{Dongmei Yu\thanks{Corresponding author. Supported partially by the Natural Science Foundation of Liaoning Province (Nos. 2020-MS-301 and 2019-BS-118), the Liaoning Provincial Department of Education (Nos. LJ2020ZD002 and LJ2019ZL001) and the China Postdoctoral Science Foundation (2019M650449). Email address: yudongmei1113@163.com.}}
\author[c]{Deren Han\thanks{Supported partially by the National Natural Science Foundation of China (Grant Nos. 11625105 and 11926358). Email address: handr@buaa.edu.cn.}}
\author[a]{Changfeng Ma\thanks{Supported partially by the National Key Research and Development Program of China (Nos.
2019YFC0312003). Email address: macf@fjnu.edu.cn.}}
\affil[a]{School of Mathematics and Statistics, FJKLMAA and Center for Applied Mathematics of Fujian Province, Fujian Normal University, Fuzhou, 350007, P.R. China.}
\affil[b]{Institute for Optimization and Decision Analytics, Liaoning Technical University, Fuxin, 123000, P.R. China.}
\affil[c]{LMIB of the Ministry of Education, School of Mathematical Sciences, Beihang University, Beijing, 100191, P.R. China.}
%%%%%%%%%%%%%%%%%%%%%%%%%%%%%%%%%%%%%%%%%%%%%%%%
\begin{document}
\date{\today}
\maketitle

%\double

%\begin{center}
%%%%%%%%%%%%%%%%%%%%%%%%%%%%%%%%%%%%%%%%%%%%%%
\begin{quote}
{\bf Abstract:} The system of generalized absolute value equations (GAVE) has attracted more and more attention in the optimization community. In this paper, by introducing a smoothing function, we develop a smoothing Newton algorithm with non-monotone line search to solve the GAVE. We show that the non-monotone algorithm is globally and locally quadratically convergent under a weaker assumption than those given in most existing algorithms for solving the GAVE. Numerical results are given to demonstrate the viability and efficiency of the approach.

{\small
\medskip
{\em 2000 Mathematics Subject Classification}. 65F10, 65H10, 90C30

\medskip
{\em Keywords}.
Generalized absolute value equations; Smoothing function; Smoothing Newton algorithm; Non-monotone line search; Global and local quadratic convergence.
}

\end{quote}
%\end{center}
%%%%%%%%%%%%%%%%%%%%%%%%%%%%%%%%%%%%%%%%%%%%
\section{Introduction}\label{sec:intro}
The system of generalized absolute value equations (GAVE) is to find a vector $x \in \mathbb{R}^n$ such that
\begin{equation}\label{eq:gave}
Ax + B|x| - b = 0\footnote{In the literature, GAVE also occurs in the form of $Ax - B|x| = b$. In this paper, we do not make a distinction between it and \eqref{eq:gave} and put it down to GAVE~\eqref{eq:gave}.},
\end{equation}
where $A\in \mathbb{R}^{n\times n}$ and $0\neq B\in \mathbb{R}^{n\times n}$ are two known matrices, $b\in \mathbb{R}^n$ is a known vector, and $|x|$ denotes the componentwise absolute value of $x\in \mathbb{R}^n$. To the best of our knowledge, GAVE~\eqref{eq:gave} was first introduced by Rohn in \cite{rohn2004} and further investigated in \cite{prok2009,mang2007,hladik2018,mezzadri2020,hu2011,miao2017} and references therein. Obviously, when $B = -I$ with $I$ being the identity matrix, GAVE~\eqref{eq:gave} becomes the system of absolute value equations (AVE)
\begin{equation}\label{eq:ave}
Ax - |x| - b = 0,
\end{equation}
which is the subject of numerous research works; see, e.g., \cite{mang2006,zamani2021,zhang2009,caccetta2011,mang2007a,haghani2015} and references therein.

GAVE~\eqref{eq:gave} and AVE~\eqref{eq:ave} have attracted considerable attention in the field of optimization for almost twenty years, and the primary reason is that they are closely related to the linear complementarity problem (LCP) \cite{mang2006,prok2009} and the horizontal LCP (HLCP) \cite{mezzadri2020}, which encompass many mathematical programming problems and have many practical applications \cite{cottle2009,mezzadri2020m}. In addition, GAVE~\eqref{eq:gave} and AVE~\eqref{eq:ave} are also
bound up with the system of linear interval equations \cite{rohn1989}.

Due to the combinatorial character introduced by the absolute value operator, solving GAVE~\eqref{eq:gave} is generally NP-hard \cite[Proposition~2]{mang2007}.  Moreover, if GAVE~\eqref{eq:gave} is solvable, checking whether it has a unique solution or multiple solutions is NP-complete \cite[Proposition~2.1]{prok2009}. Recently, GAVE~\eqref{eq:gave} and AVE~\eqref{eq:ave} have been extensively investigated in the literature, and the main research effort can be summarized to the following two aspects.

On the theoretical side, one of the main branches is to investigate conditions for existence, non-existence and uniqueness of solutions of GAVE~\eqref{eq:gave} or AVE~\eqref{eq:ave}; see, e.g., \cite{rohn2004,mezzadri2020,wu2019,mang2006,hu2010,prok2009,hladik2018,wu2021,wu2018} and references therein. Specially, the following necessary and sufficient conditions that ensure the existence and uniqueness of solution of GAVE~\eqref{eq:gave} can be found in \cite{mezzadri2020,wu2021} (see section~\ref{sec:pre} for the definition of the column $\mathcal{W}$-property).

\begin{thm}(\cite[Theorem~1]{mezzadri2020})\label{thm:unique}
The following statements are equivalent:
\begin{itemize}
  \item [(i)] GAVE~\eqref{eq:gave} has a unique solution for any $b\in \mathbb{R}^n;$

  \item [(ii)] $\{A+B,A-B\}$ has the column $\mathcal{W}$-property;

  \item [(iii)] for arbitrary nonnegative diagonal matrices $D_1,D_2\in \mathbb{R}^{n\times n}$ with $D_1 + D_2>0,$
      \begin{equation*}
      \det\left[ (A+B)D_1 + (A-B)D_2\right]\neq 0;
      \end{equation*}

  \item [(iv)] $A + B$ is nonsingular and $\{I,(A+B)^{-1}(A-B)\}$ has the column $\mathcal{W}$-property.
\end{itemize}
\end{thm}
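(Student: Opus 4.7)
The plan is to establish the chain of equivalences (i) $\Leftrightarrow$ (iii), (ii) $\Leftrightarrow$ (iii), and (iii) $\Leftrightarrow$ (iv). The conceptual backbone is the well-known reformulation of GAVE as a horizontal linear complementarity problem, after which the characterizations in (ii)--(iv) appear as standard algebraic properties of the pair of matrices $\{A+B,A-B\}$.

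For (i) $\Leftrightarrow$ (iii), the key step is the substitution $x=x^+-x^-$, where $x^+=\max(x,0)$ and $x^-=\max(-x,0)$ are taken componentwise, so that $x^\pm\geq 0$, $(x^+)^\top x^-=0$, and $|x|=x^++x^-$. Inserting these into \eqref{eq:gave} turns GAVE into the HLCP
\begin{equation*}
(A+B)u-(A-B)v=b,\qquad u,v\geq 0,\qquad u^\top v=0,
\end{equation*}
with $u=x^+$ and $v=x^-$; and since the decomposition $x=x^+-x^-$ with these sign/complementarity conditions is unique, uniqueness for GAVE for every $b$ is equivalent to unique solvability of this HLCP for every $b$. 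I then invoke the classical characterization of HLCP unique solvability in terms of the column representatives of $\{A+B,A-B\}$, whose cleanest analytic form is precisely that $(A+B)D_1+(A-B)D_2$ is nonsingular for every pair of nonnegative diagonal matrices $D_1,D_2$ with $D_1+D_2>0$. The main obstacle here is justifying this HLCP fact: if I were to prove it from scratch I would argue one direction by a piecewise-linear degree/homotopy argument (the map $(u,v)\mapsto (A+B)u-(A-B)v$ restricted to each orthant-like piece has the prescribed Jacobian $(A+B)D_1+(A-B)D_2$), and the other by showing that failure of (iii) yields two distinct complementary solutions for some $b$.

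For (ii) $\Leftrightarrow$ (iii), I would work directly from the definition of the column $\mathcal{W}$-property recalled in Section~\ref{sec:pre}. Every column representative of $\{A+B,A-B\}$ is obtained by a $0$--$1$ diagonal selection, and conversely any matrix of the form $(A+B)D_1+(A-B)D_2$ with $D_1+D_2>0$ is a positive-diagonal scaling of a convex combination of column representatives. A standard multilinearity-in-the-columns argument then shows that the nonvanishing (with common sign) of all such determinants is equivalent to the nonvanishing of the family in (iii); this step is routine linear algebra.

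Finally, for (iii) $\Leftrightarrow$ (iv), choosing $D_1=I$ and $D_2=0$ in (iii) immediately forces $\det(A+B)\neq 0$, so $A+B$ is invertible. Left-multiplying $(A+B)D_1+(A-B)D_2$ by $(A+B)^{-1}$ preserves nonsingularity and yields $I\cdot D_1+(A+B)^{-1}(A-B)\cdot D_2$, which via the equivalence (ii) $\Leftrightarrow$ (iii) already established (applied to the pair $\{I,(A+B)^{-1}(A-B)\}$) is exactly the column $\mathcal{W}$-property in (iv); the reverse implication is obtained by multiplying back by $A+B$. Thus the four statements are equivalent, and the only substantive piece is the HLCP characterization used in the (i) $\Leftrightarrow$ (iii) step.
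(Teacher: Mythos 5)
The paper does not prove this theorem at all: it is imported verbatim from \cite[Theorem~1]{mezzadri2020}, so there is no in-paper argument to compare against. Your sketch is nevertheless a faithful reconstruction of how the result is actually established in that reference. The reduction $x=x^{+}-x^{-}$, $|x|=x^{+}+x^{-}$, giving $(A+B)x^{+}-(A-B)x^{-}=b$ with $x^{\pm}\ge 0$ complementary, is exactly the GAVE-to-HLCP correspondence (cf.\ \eqref{eq:hlcp} and \cite[Proposition~1]{mezzadri2020}), and the bijection between solutions is correct, so (i) does transfer to unique solvability of the HLCP with the pair $\{A+B,A-B\}$; the remaining equivalences with (ii)--(iii) are then the Sznajder--Gowda characterization \cite[Theorem~2]{sznajder1995}, which you are entitled to cite just as the sources do. Two small points of precision. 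First, in your (ii) $\Leftrightarrow$ (iii) step, multilinearity of the determinant in the columns only gives (ii) $\Rightarrow$ (iii): writing $\det\left[(A+B)D_1+(A-B)D_2\right]$ as a nonnegative, not-all-zero combination of the $2^n$ representative determinants shows it cannot vanish when those determinants share a sign. The converse needs an extra connectedness argument: setting $D_1,D_2$ to complementary $0$--$1$ matrices shows each representative is nonsingular, and constancy of the sign follows because the determinant is continuous and nonvanishing along the segment $D_1(t)+D_2(t)=I$ joining any two such selections; ``multilinearity'' alone does not deliver the common sign. Second, the degree/homotopy argument you gesture at for proving the HLCP characterization from scratch is genuinely nontrivial (it is the content of \cite{sznajder1995}), so as written your proof is complete only modulo that citation --- which is acceptable here, since the paper itself treats the whole theorem as a citation.
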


\begin{thm}(\cite[Theorem~3.2]{wu2021})\label{thm:unique2}
GAVE~\eqref{eq:gave} has a unique solution for any $b\in \mathbb{R}^n$ if and only if matrix $A + BD$ is nonsingular for any diagonal matrix $D = \diag(d_i)$ with $d_i \in [-1,1]$.
\end{thm}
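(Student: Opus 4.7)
The plan is to prove Theorem \ref{thm:unique2} by showing it is equivalent to condition (iii) of Theorem \ref{thm:unique}, since that theorem already guarantees unique solvability is equivalent to the nonvanishing determinant condition. So the task reduces to a purely linear-algebraic equivalence between two parametrized families of matrices.

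First, I would rewrite the matrix appearing in (iii). For nonnegative diagonal matrices $D_1,D_2$ with $D_1+D_2>0$, observe
\begin{equation*}
(A+B)D_1+(A-B)D_2 = A(D_1+D_2) + B(D_1-D_2).
\end{equation*}
Because $D_1+D_2$ is a positive diagonal matrix, it is invertible, so
\begin{equation*}
(A+B)D_1+(A-B)D_2 = \bigl[A + B(D_1-D_2)(D_1+D_2)^{-1}\bigr](D_1+D_2).
\end{equation*}
Setting $D:=(D_1-D_2)(D_1+D_2)^{-1}$, its $i$-th diagonal entry is $(d_{1,i}-d_{2,i})/(d_{1,i}+d_{2,i})$, which lies in $[-1,1]$ since $d_{1,i},d_{2,i}\geq 0$ and not both zero. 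Thus $\det[(A+B)D_1+(A-B)D_2]\neq 0$ is equivalent to $\det(A+BD)\neq 0$ for the associated $D$.

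Conversely, given any diagonal $D=\diag(d_i)$ with $d_i\in[-1,1]$, I would set $D_1=\tfrac12\diag(1+d_i)$ and $D_2=\tfrac12\diag(1-d_i)$. These are nonnegative with $D_1+D_2=I>0$ and $D_1-D_2=D$, so $A+BD=A(D_1+D_2)+B(D_1-D_2)=(A+B)D_1+(A-B)D_2$. This realizes every $A+BD$ of the theorem's form as an instance of the matrix family in condition (iii).

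The two constructions together show that the family $\{(A+B)D_1+(A-B)D_2:D_1,D_2\geq 0,\ D_1+D_2>0\}$ and the family $\{(A+BD)(D_1+D_2):D=\diag(d_i),\ d_i\in[-1,1]\}$ cover the same determinantal condition (up to the invertible factor $D_1+D_2$). Combining this with the equivalence (i)$\Leftrightarrow$(iii) of Theorem \ref{thm:unique} yields the claim. I do not expect any serious obstacle: the content is the factorization and the bijection $D\leftrightarrow (D_1,D_2)$ modulo positive scaling, both of which are elementary.
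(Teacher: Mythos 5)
Your proposal is correct and follows essentially the same route the paper takes: although the paper only cites this theorem from the literature, its proof of Lemma~\ref{lem:ns} establishes exactly your equivalence between condition (iii) of Theorem~\ref{thm:unique} and the nonsingularity of $A+BD$ for all $D=\diag(d_i)$ with $d_i\in[-1,1]$, via the same factorization $(A+B)D_1+(A-B)D_2=\bigl[A+B(D_1-D_2)(D_1+D_2)^{-1}\bigr](D_1+D_2)$. The only cosmetic difference is that your uniform choice $D_1=\tfrac12\diag(1+d_i)$, $D_2=\tfrac12\diag(1-d_i)$ replaces the paper's three-case construction for realizing each $d_i\in[-1,1]$ as $(\bar d_i-\tilde d_i)/(\bar d_i+\tilde d_i)$, which is a slight simplification.
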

It is easy to conclude that Theorem~\ref{thm:unique} and Theorem~\ref{thm:unique2} imply that $\{A+B,A-B\}$ has the column $\mathcal{W}$-property if and only if matrix $A + BD$ is nonsingular for any diagonal matrix $D = \diag(d_i)$ with $d_i \in [-1,1]$ (see Lemma~\ref{lem:ns} for more details).

On the numerical side, there are various algorithms for solving AVE~\eqref{eq:ave} or GAVE~\eqref{eq:gave}. For example, Mangasarian proposed concave minimization method \cite{mang2007a}, generalized Newton method \cite{mang2009}, and successive linear programming method \cite{mang2009k}, for solving AVE~\eqref{eq:ave}.
Zamani and Hlad\'{i}k proposed a new concave minimization algorithm for AVE~\eqref{eq:ave} \cite{zamani2021}, which solves a deficiency of the method proposed in \cite{mang2007a}. Zainali and Lotfi modified the generalized Newton method and developed a stable and quadratic convergent method for AVE~\eqref{eq:ave} \cite{zainali2018}. Cruz et al. proposed an inexact semi-smooth Newton method for AVE~\eqref{eq:ave} \cite{cruz2016}. Shahsavari and Ketabchi proposed two types of proximal algorithms to solve AVE~\eqref{eq:ave} \cite{shahsavari2021}. Haghani introduced generalized Traub's method for AVE~\eqref{eq:ave} \cite{haghani2015}. Ke and Ma proposed an SOR-like iteration method for AVE~\eqref{eq:ave} \cite{ke2017}. Caccetta et al. proposed a smoothing Newton method for AVE~\eqref{eq:ave} \cite{caccetta2011}. Saheya et al. summarized several systematic ways of constructing smoothing functions and proposed a unified neural network model for solving AVE~\eqref{eq:ave} \cite{saheya2019}. Zhang and Wei proposed a generalized Newton method which combines the semismooth and the smoothing Newton steps for AVE~\eqref{eq:ave} \cite{zhang2009}. In  \cite{lian2018}, Lian et al. further considered the generalized Newton method for GAVE~\eqref{eq:gave} and presented some weaker convergent conditions compared to the results in \cite{mang2009,hu2011}.  Wang et al. proposed modified Newton-type iteration methods for GAVE~\eqref{eq:gave} \cite{wang2019}. Zhou et al. established Newton-based matrix splitting methods for GAVE~\eqref{eq:gave} \cite{zhou2021}. Jiang and Zhang proposed a smoothing-type algorithm for GAVE~\eqref{eq:gave} \cite{jiang2013}. Tang and Zhou proposed a quadratically convergent descent method for GAVE~\eqref{eq:gave} \cite{tang2019}. Hu et al. proposed a generalized Newton method for absolute value equations associated with second order cones (SOCAVE), which is an extension of GAVE~\eqref{eq:gave} \cite{hu2011}. For more numerical algorithms, one can refer to \cite{achache2018,chen2021,gu2017,edhs2017,yuch2020,ke2020,maer2018} and references therein.

By looking into the mathematical format of GAVE~\eqref{eq:gave}, non-differentiability is caused by the absolute value operator. Smoothing algorithms have been successfully applied to solve GAVE~\eqref{eq:gave} \cite{jiang2013,tang2019}. However, monotone line search techniques were used in the methods proposed in \cite{jiang2013,tang2019}. Recently, great attention has been paid to smoothing algorithms with non-monotone line search; see, e.g., \cite{tang2021,tang2018,huang2009,zhu2011} and references therein. Non-monotone line search schemes can improve the likelihood of finding a global optimum and improve convergence speed in cases where a monotone line search scheme is forced to creep along the bottom of a narrow curved valley \cite{zhang2004}. It is therefore interesting to develop non-monotone smoothing algorithms for solving GAVE~\eqref{eq:gave}. This motivates us to develop a non-monotone smoothing Newton algorithm for solving GAVE~\eqref{eq:gave}. Our work here is inspired by recent studies on weighted complementarity problem \cite{tang2021,tang2021q}.

The rest of this paper is organized as follows. In section~\ref{sec:pre}, we provide some concepts and results used throughout the paper. In section~\ref{sec:nsna}, we develop a non-monotone smoothing Newton algorithm for solving GAVE~\eqref{eq:gave}, while section~\ref{sec:convergence} is devoted to discussing the convergence. Numerical experiments are given in section~\ref{sec:numer}. Finally, section~\ref{sec:conclusion} concludes this paper.

\textbf{Notation.} $\mathbb{R}^{m\times m}$ is the set of all $m\times m$ real matrices,
$\mathbb{R}^n=\mathbb{R}^{n\times 1}$, and
$\mathbb{R}=\mathbb{R}^1$. $\mathbb{R}_+$ and $\mathbb{R}_{++}$ denote the nonnegative and positive real number, respectively.  $I_n$ (or simply $I$ if its dimension
is clear from the context) is the $n\times n$ identity matrix. The
superscript  ``${\cdot}^{\top}$'' takes transpose. For $X\in \mathbb{R}^{m\times n}$, $X_{i,j}$ refers to its $(i,j)$th entry, $|X|$ is in $\mathbb{R}^{m\times n}$ with its $(i,j)$th entry $|X_{i,j}|$. Inequality $X\le Y$ means $X_{i,j}\le Y_{i,j}$ for all $(i,j)$, and similarly for $X<Y$.
We use $t \downarrow 0$ to denote the case that a positive scalar $t$ tends to $0$. We use $\alpha = O(\beta)$ to mean $\frac{\alpha}{\beta}$ is bounded uniformly as $\beta \rightarrow 0$. For any $a\in \mathbb{R}$, we define $\sgn(a) := \left\{\begin{array}{lr} 1,& \text{if}~a >0, \\ 0, & \text{if}~a =0,\\ -1,& \text{if}~a <0. \end{array}\right.$ We denote the diagonal matrix whose $i$th diagonal element is $x_i$ by $\diag(x_i)$ and define $D(x) := \diag(\sgn(x_i))$. The symbol $\|\cdot\|$ stands for the $2$-norm. For a matrix $P\in \mathbb{R}^{m\times n}$, we use $\sigma_{\min}(P)$ and $\sigma_{\max}(P)$ to denote the smallest singular value and the largest singular value, respectively. For a differentiable mapping $G:\mathbb{V}\subset\mathbb{R}^n\rightarrow \mathbb{R}^n$, we denote $G'(x)$ by the Jacobian of $G$ at $x\in \mathbb{V}$ and $\nabla G(x) = G'(x)^\top$ denotes the gradient of $G$ at $x$. For $x\in \mathbb{R}^n$, we also denote $x$ by $\vecv(x_i)$.

\section{Preliminaries}\label{sec:pre}
In this section, we collect some basic notions as well as corresponding assertions, which are useful in this paper.

\begin{defn}(\cite{sznajder1995})
Let $\mathcal{M}:= \{M,N\}$ be a set of matrices $M,\,N\in \mathbb{R}^{n\times n}$, a matrix $R\in \mathbb{R}^{n\times n}$ is called a column representative of $\mathcal{M}$ if
$$
R_{\cdot j} \in \{M_{\cdot j},N_{\cdot j}\},\quad j = 1,2,\cdots,n,
$$
where $R_{\cdot j},\,M_{\cdot j}$ and $N_{\cdot j}$ denote the $j$th column of $R,\,M$ and $N$, respectively. $\mathcal{M}$ is said to have the column $\mathcal{W}$-property if the determinants of all column representative matrices of $\mathcal{M}$ are all positive or all negative.
\end{defn}

\begin{defn}(\cite{rohn1989})
An interval matrix $A^I$ is defined by $A^I := [\underline{A},\bar{A}]=\{X:\underline{A}\le X\le \bar{A}\}$. A square interval matrix $A^I$ is called regular if each $X\in A^I$ is nonsingular.
\end{defn}

\begin{defn}\label{defn:dd}(See, e.g., \cite{fischer1997})
The classic (one-sided) directional derivative of a function $f:\mathbb{R}^n \rightarrow \mathbb{R}$ at $x$ in the direction $y$ is defined by
\begin{equation*}%\label{eq:dd}
f'(x;y) = \lim_{t\downarrow 0}\frac{f(x + ty)-f(x)}{t},
\end{equation*}
provided that the limit exists. Accordingly, $F'(x;y) = [F'_1(x;y), \cdots, F'_m(x;y)]^\top$ denotes the directional derivative for the vector-valued function $F:\mathbb{R}^n \rightarrow \mathbb{R}^m$.
\end{defn}

\begin{defn}(\cite{fischer1997})
A vector-valued function $F:\mathbb{R}^n\rightarrow \mathbb{R}^m$ is said to be Lipschitz continuous on a set $\mathcal{S} \subset \mathbb{R}^n$ if there is a constant $L>0$ such that
\begin{equation*}%\label{ie:lc}
\|F(x) - F(y)\|\le L\|x-y\|,\quad x,y\in S.
\end{equation*}
Moreover, $F$ is called locally Lipschitz continuous on $\mathbb{R}^n$ if it is Lipschitz continuous on all compact subsets $\mathcal{S} \subset \mathbb{R}^n$.
\end{defn}

If $F:\mathbb{R}^n\rightarrow \mathbb{R}^m$ is locally Lipschitz continuous, by Rademacher's Theorem, $F$ is differentiable almost everywhere \cite{qi1993}. Let $D_F$ be the set where $F$ is differentiable, then the generalized Jacobian of $F$ at $x$ in the sense of Clarke \cite{clarke1983} is
$$
\partial F(x) = \co\left\{ \lim_{\begin{array}{c}x^{(k)}\in D_F\\x^{(k)} \rightarrow x\end{array}}\nabla F(x^{(k)})\right\},
$$
where ``$\co$'' denotes the convex hull.

\begin{defn}(\cite{qi2000})
A locally Lipschitz continuous vector-valued function $F:\mathbb{R}^n\rightarrow \mathbb{R}^m$ is called semismooth at $x$ if
$$
\lim_{\begin{array}{c}V\in \partial F(x+td')\\d'\rightarrow d,t\downarrow 0\end{array}} \{Vd'\}
$$
exists for any $d\in \mathbb{R}^n$.
\end{defn}

\begin{lem}(\cite{qi2000}\label{lem:dd})
Let $F:\mathbb{R}^n\rightarrow \mathbb{R}^m$, then the directional derivative $F'(x;d)$ exists for any $d\in \mathbb{R}^n$ if $F$ is semismooth at $x$.
\end{lem}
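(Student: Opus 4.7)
The plan is to fix an arbitrary direction $d\in\mathbb{R}^n$ and an arbitrary null sequence $t_k\downarrow 0$, and prove that the quotient $\big(F(x+t_kd)-F(x)\big)/t_k$ converges to the common value $L$ of the limit appearing in the definition of semismoothness (specialized to $d'=d$). Since the sequence $t_k$ is arbitrary, this yields existence of $F'(x;d)=L$ in the sense of Definition~\ref{defn:dd}.

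The first step is to exploit the local Lipschitz continuity of $F$, which is built into the definition of semismoothness through the use of $\partial F$. By Rademacher's theorem $F$ is differentiable almost everywhere, so along the segment $\{x+sd:s\in[0,t]\}$ the mapping $s\mapsto F(x+sd)$ is absolutely continuous with classical derivative $F'(x+sd)\,d$ wherever $F'$ exists. Applying the fundamental theorem of calculus componentwise and changing variables via $s=\tau t$ yields
\begin{equation*}
\frac{F(x+td)-F(x)}{t}=\int_0^1 V_\tau(t)\,d\,d\tau,
\end{equation*}
where $V_\tau(t):=F'(x+\tau t d)$ is well defined for almost every $\tau\in(0,1]$ and satisfies $V_\tau(t)\in\partial F(x+\tau t d)$ there.

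Second, I would apply the semismoothness hypothesis fibrewise. Fix $\tau\in(0,1]$ at which $V_\tau(t_k)$ is defined for all $k$ (the exceptional $\tau$'s form a countable union of null sets, hence are negligible), and set $d'=d$ and $t=\tau t_k\downarrow 0$ in the definition: the vectors $V_\tau(t_k)\,d$ converge to the limit $L$ guaranteed by semismoothness, and $L$ is independent of $\tau$ because the semismooth limit is by assumption single-valued. Local Lipschitz continuity with constant $L_0$ furnishes a uniform bound $\|V_\tau(t_k)\,d\|\le L_0\|d\|$ valid for all large $k$ and a.e.\ $\tau$, so Lebesgue's dominated convergence theorem gives
\begin{equation*}
\frac{F(x+t_kd)-F(x)}{t_k}=\int_0^1 V_\tau(t_k)\,d\,d\tau\;\longrightarrow\;\int_0^1 L\,d\tau=L,
\end{equation*}
which completes the argument.

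The main obstacle is really just the integral representation in the first step: one must justify absolute continuity of $s\mapsto F(x+sd)$ for the vector-valued locally Lipschitz function $F$ and handle the measurable selection $V_\tau(t)\in\partial F(x+\tau t d)$ at the exceptional null set where $F$ fails to be differentiable. The essential boundedness of every such selection by $L_0\|d\|$ renders the particular choice irrelevant for the value of the integral, so this is a purely technical rather than a conceptual difficulty.
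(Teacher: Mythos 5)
The paper itself offers no proof of this lemma (it is quoted from \cite{qi2000}), so the comparison is with the standard argument in that reference, which is essentially the strategy you outline: represent the difference quotient as an average of generalized Jacobians along the segment $[x,x+td]$ and pass to the limit using semismoothness. Your second step --- specializing the semismooth limit to $d'=d$ and $t=\tau t_k$, noting the limit $L$ is independent of $\tau$, bounding $\|V_\tau(t_k)d\|$ by $L_0\|d\|$, and invoking dominated convergence --- is sound (in fact the convergence is uniform in $\tau$, so dominated convergence is more than you need).

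The first step, however, contains a genuine error. You claim that $V_\tau(t):=F'(x+\tau t d)$ --- the classical Jacobian of $F$ as a map on $\mathbb{R}^n$ --- is well defined for almost every $\tau\in(0,1]$, dismissing the exceptional set as ``a countable union of null sets.'' Rademacher's theorem only says that the non-differentiability set $N$ of $F$ has $n$-dimensional Lebesgue measure zero; for $n\ge 2$ the segment $\{x+sd: s\in[0,t]\}$ is itself a null set in $\mathbb{R}^n$, and nothing prevents $N$ from containing the entire segment. For example, $F(x_1,x_2)=|x_2|$ is non-differentiable at every point of the line $x_2=0$, so if $x$ and $d$ lie along that line your $V_\tau(t)$ is defined for no $\tau$ whatsoever, and the representation collapses. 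The correct route is to work with the one-dimensional map $g(s):=F(x+sd)$, which is Lipschitz in $s$, hence differentiable for a.e.\ $s$ with $g(t)-g(0)=\int_0^t g'(s)\,ds$; Clarke's chain rule then yields the inclusion $g'(s)\in\partial F(x+sd)\,d$ wherever $g'(s)$ exists, which supplies the selection $V_\tau(t)d$ with $V_\tau(t)$ an element of the \emph{generalized} Jacobian rather than the classical one. Equivalently, Clarke's mean-value theorem gives $F(x+td)-F(x)\in t\,\co\{\partial F(x+std)\,d : s\in[0,1]\}$ directly, after which semismoothness places every element of that set within $\varepsilon$ of $L$ for small $t$ and no integration is required. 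With either repair the remainder of your argument goes through unchanged.
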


\begin{lem}(\cite{qi2000}\label{lem:ssm})
Suppose that $F:\mathbb{R}^n\rightarrow \mathbb{R}^m$ is semismooth at $x$. Then it is called strongly semismooth at $x$ if
$$
Vd - F'(x;d) = O(\|d\|^2)
$$
for any $V\in \partial F(x+d)$ and $d\rightarrow 0$.
\end{lem}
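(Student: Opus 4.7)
The statement, despite its ``Lemma'' label, is phrased in definitional form: it introduces the term \emph{strongly semismooth} by appending the estimate $Vd - F'(x;d) = O(\|d\|^2)$ to the semismoothness hypothesis. Strictly speaking, no proof is required. If one reads it instead as asserting that this condition is equivalent to the classical Qi--Sun characterization, namely $F(x+d) - F(x) - Vd = O(\|d\|^2)$ for every $V \in \partial F(x+d)$ as $d \to 0$, then the plan below is how I would proceed.

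First, I would invoke Lemma~\ref{lem:dd} to guarantee that the directional derivative $F'(x;d)$ exists for every $d\in\mathbb{R}^n$, so that the expression $Vd - F'(x;d)$ is well defined. Next, using that $F$ is locally Lipschitz and therefore a.e.\ differentiable by Rademacher, I would exploit the generalized mean-value inclusion
$F(x+d) - F(x) \in \co\{Vd : V \in \partial F(x+sd),\ s \in [0,1]\}$, which lets one express the finite increment in terms of Clarke subgradients sampled along the segment $[x,x+d]$. Combined with the positive homogeneity of $F'(x;\cdot)$, this reduces the task to estimating $V(s)d - F'(x;d)$ for $V(s) \in \partial F(x+sd)$ uniformly in $s\in(0,1]$.

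The core step is to upgrade the endpoint estimate $Vd - F'(x;d) = O(\|d\|^2)$ into a bound that holds at every scale $sd$ along the segment; after substituting and integrating in $s$, one gets $F(x+d) - F(x) - F'(x;d) = O(\|d\|^2)$, and subtracting from the hypothesis recovers the Qi--Sun form. The reverse direction follows from the identity
$Vd - F'(x;d) = \bigl(Vd - F(x+d) + F(x)\bigr) + \bigl(F(x+d) - F(x) - F'(x;d)\bigr)$.
The main obstacle I anticipate is justifying the uniformity of the $O(\|d\|^2)$ bound along the whole segment rather than only at the endpoint; this requires the local uniformity of the estimate together with the upper semicontinuity and local boundedness of $\partial F$, so that the implicit constant in $O(\|d\|^2)$ can be chosen independently of $s\in(0,1]$.
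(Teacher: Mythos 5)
You are right that this ``lemma'' is really the definition of strong semismoothness imported from \cite{qi2000}, and the paper accordingly supplies no proof --- it is only a citation, exactly as you concluded. Your additional sketch of an equivalence with the increment-based characterization $F(x+d)-F(x)-Vd=O(\|d\|^2)$ is reasonable but goes beyond anything the paper attempts or needs, so no comparison is warranted.
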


Throughout the rest of this paper, we always assume that the following assumption holds.
\begin{assu}\label{assu:as}
Let matrices $A$ and $B$ satisfy
$\{A+B,A-B\}$ has the column $\mathcal{W}$-property.
\end{assu}
It is known that, if Assumption~\ref{assu:as} holds, GAVE~\eqref{eq:gave} has a unique solution for any $b\in \mathbb{R}^n$~\cite{mezzadri2020}. In addition, we have the following lemma, which is needed in the subsequent discussion.

\begin{lem}\label{lem:ns}
Assumption~\ref{assu:as} holds if and only if matrix $A + BD$ is nonsingular for any diagonal matrix $D = \diag (d_i)$ with $d_i\in [-1,1](i=1,2,\cdots,n)$.
\end{lem}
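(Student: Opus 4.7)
The plan is to derive the equivalence directly from Theorem~\ref{thm:unique}, specifically from the equivalence of its clauses (ii) and (iii). Clause (ii) is exactly Assumption~\ref{assu:as}, while clause (iii) states that $\det[(A+B)D_1+(A-B)D_2]\neq 0$ for every pair of nonnegative diagonal matrices $D_1,D_2$ with $D_1+D_2>0$. Thus it will suffice to show that clause (iii) is equivalent to the nonsingularity of $A+BD$ for every diagonal $D$ with entries in $[-1,1]$.

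The key algebraic tool I would use is the convex-combination identity
\[
A + BD \;=\; (A+B)\cdot\tfrac{1}{2}(I+D) \;+\; (A-B)\cdot\tfrac{1}{2}(I-D),
\]
which is verified by expanding both sides. Granting Assumption~\ref{assu:as}, for any diagonal $D=\diag(d_i)$ with $d_i\in[-1,1]$ the matrices $D_1:=\tfrac{1}{2}(I+D)$ and $D_2:=\tfrac{1}{2}(I-D)$ are nonnegative diagonal with $D_1+D_2=I>0$, and Theorem~\ref{thm:unique}(iii) then yields $\det(A+BD)\neq 0$, giving one direction.

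For the converse, given arbitrary nonnegative diagonal $D_1=\diag(a_i)$ and $D_2=\diag(b_i)$ with $a_i+b_i>0$ for every $i$, I would set $S:=D_1+D_2$ (a positive diagonal, hence nonsingular matrix) and introduce $D:=\diag(d_i)$ with $d_i:=(a_i-b_i)/(a_i+b_i)\in[-1,1]$. A short diagonal-matrix computation gives
\[
(A+B)D_1 + (A-B)D_2 \;=\; A(D_1+D_2) + B(D_1-D_2) \;=\; (A+BD)\,S,
\]
so nonsingularity of $A+BD$ for every admissible $D$, combined with nonsingularity of $S$, forces the determinant on the left to be nonzero. Invoking Theorem~\ref{thm:unique}(iii)$\Leftrightarrow$(ii) then delivers Assumption~\ref{assu:as}. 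I do not anticipate any real obstacle; the only mildly non-routine ingredient is spotting the convex-combination identity and its rescaled reverse form, after which the lemma reduces to direct diagonal bookkeeping.
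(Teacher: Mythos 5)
Your proposal is correct and follows essentially the same route as the paper: both reduce the lemma to the equivalence (ii)$\Leftrightarrow$(iii) of Theorem~\ref{thm:unique} via the identity $(A+B)D_1+(A-B)D_2=(A+BD)(D_1+D_2)$ with $D=(D_1-D_2)(D_1+D_2)^{-1}$. Your explicit choice $D_1=\tfrac{1}{2}(I+D)$, $D_2=\tfrac{1}{2}(I-D)$ in the forward direction is a minor streamlining, since it avoids the case analysis on $d_i=\pm 1$ that the paper performs when showing that every diagonal $D$ with entries in $[-1,1]$ arises from an admissible pair $(\bar{D},\tilde{D})$.
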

\begin{proof}
The result can be straightly derived from Theorem~\ref{thm:unique} and Theorem~\ref{thm:unique2}. Indeed, it follows from Theorem~\ref{thm:unique} that Assumption~\ref{assu:as} holds if and only if
\begin{equation*}%\label{eq:det}
\det\left[ (A+B)\bar{D} + (A-B)\tilde{D}\right]\neq 0
\end{equation*}
for any nonnegative diagonal matrices $\bar{D},\tilde{D}\in \mathbb{R}^{n\times n}$ with $\bar{D} + \tilde{D} > 0$, that is,
\begin{equation}\label{neq:eq}
\framebox{
\parbox{12cm}{
$\det\left[ A + B(\bar{D} - \tilde{D})(\bar{D} + \tilde{D})^{-1}\right]\neq 0$ for any nonnegative diagonal matrices $\bar{D},\tilde{D}\in \mathbb{R}^{n\times n}$ with $\bar{D} + \tilde{D} > 0$.
}
}
\end{equation}

Let
\begin{align*}
\mathcal{D}_1 &:= \{D\in \mathbb{R}^{n\times n}:D = \diag(d_i), d_i\in [-1,1](i=1,2,\cdots,n)\},\\
\mathcal{D}_2 &:= \{D\in \mathbb{R}^{n\times n}:D = (\bar{D}-\tilde{D})(\bar{D} + \tilde{D})^{-1}, \bar{D} =\diag(\bar{d}_i)\ge 0, \tilde{D} = \diag(\tilde{d}_i)\ge 0, \bar{D} + \tilde{D} >0\}.
\end{align*}
Then, on one hand, for any $D\in \mathcal{D}_2$, we have $|D_{i,i}| = \frac{|\bar{d}_i - \tilde{d}_i|}{|\bar{d}_i + \tilde{d}_i|}\le 1$. Thus, $\mathcal{D}_2 \subseteq \mathcal{D}_1$. On the other hand, for any $D = \diag(d_i)\in \mathcal{D}_1$, $d_i\in [-1,1]$ can be expressed by $d_i = \frac{\bar{d}_i - \tilde{d}_i}{\bar{d}_i + \tilde{d}_i}$ with
\begin{equation*}
\begin{cases}
&\bar{d}_i > 0, \tilde{d}_i = 0,\quad \text{if}~d_i = 1;\\
&\bar{d}_i = 0, \tilde{d}_i > 0,\quad \text{if}~d_i = -1;\\
&\bar{d}_i = \frac{(1+d_i)\tilde{d}_i}{1-d_i}>0, \quad \text{if}~d_i \in (-1,1).
\end{cases}
\end{equation*}
Hence, $\mathcal{D}_1 \subseteq \mathcal{D}_2$. It follows from the above discussion that $\mathcal{D}_1 = \mathcal{D}_2$. Then \eqref{neq:eq} is equivalent to
\begin{equation*}
\det(A + BD)\neq 0
\end{equation*}
for any $D = \diag (d_i)$ with $d_i\in [-1,1](i=1,2,\cdots,n)$. This completes the proof.
\end{proof}

\begin{rem}\label{rem:remark1}
For symmetric matrices $A$ and $B$, under the assumption that $\sigma_{\min}(A) > \sigma_{\max}(B)$, the authors in \cite[Lemma~1]{anane2020} proved the nonsingularity of $A + BD$ for any diagonal matrix $D$ whose elements are equal to $1,0$ or $-1$. We should mentioned that the symmetries of the matrices $A$ and $B$ can be relaxed there and our result here is more general than theirs.
\end{rem}

\begin{rem}
In \cite{jiang2013}, the authors used the assumption that $\sigma_{\min}(A) > \sigma_{\max}(B)$, while in \cite{tang2019}, the authors used the assumption that the interval matrix $[A-|B|, A+|B|]$ is regular. The interval matrix $[A-|B|, A+|B|]$ is regular is weaker than that $\sigma_{\min}(A) > \sigma_{\max}(B)$ and examples can be found in \cite[Examples~2.1~and~2.3]{zhang2009}. In addition, it is easy to prove that $[A-|B|, A+|B|]$ is regular implies that Assumption~\ref{assu:as} holds, but the reverse is not true. For instance, let
$$
A = \left[\begin{array}{cc} 1001 & -496\\ -994& 501\end{array} \right], \quad B = \left[\begin{array}{cc} 999 & -494\\ -995& 499\end{array} \right],
$$
then $\{A+B,A-B\}$ has the column $\mathcal{W}$-property \cite{mezzadri2020} while $[A - |B|, A + |B|]$ is not regular. Indeed, there exists a singular matrix
$
\left[\begin{array}{cc} 2 & -2\\ -2& 2\end{array} \right]\in [A - |B|, A + |B|].
$
In conclusion, our Assumption~\ref{assu:as} here is more general than those used in \cite{jiang2013,tang2019}.
\end{rem}

\section{The algorithm}\label{sec:nsna}
In this section, we develop a non-monotone smoothing Newton algorithm for solving GAVE~\eqref{eq:gave}. To this end, we first consider an equivalent reformulation of GAVE~\eqref{eq:gave} by introducing a smoothing function for the absolute value operator.

\subsection{A smoothing function for $|x|$ with $x\in \mathbb{R}$}\label{subsec:smooth}
In this subsection, we consider a smoothing function for $|x|$ with $x\in \mathbb{R}$ and discuss some of its properties, which lay the foundation of the next subsection.

Since $|x|$ is not differentiable at $x = 0$, in order to overcome the hurdle in analysis and application, researchers construct numerous smoothing functions for it \cite{saheya2019}. In this paper, we adopt the following smoothing function $\phi:\mathbb{R}^2\rightarrow \mathbb{R}$, defined by
\begin{equation}\label{eq:smooth}
\phi(\mu,x) = \sqrt{\mu^2 + x^2} - \mu,
\end{equation}
which can be derived from the perspective of the convex conjugate \cite{saheya2019}.

In the following, we give some properties related to the smoothing function \eqref{eq:smooth}.

\begin{prop}\label{prop:prop1}
Let $\phi$ be defined by \eqref{eq:smooth}, then we have
\begin{itemize}
  \item [(i)] $\phi(0,x) = |x|;$

  \item [(ii)] $\phi$ is continuously differentiable on $\mathbb{R}^2\setminus \{(0,0)\}$, and when $(\mu,x)\neq (0,0)$, we have
      $$
      \frac{\partial \phi}{\partial \mu} = \frac{\mu}{\sqrt{\mu^2 + x^2}}-1 \quad \text{and} \quad \frac{\partial \phi}{\partial x} = \frac{x}{\sqrt{\mu^2 + x^2}};
      $$

  \item [(iii)] $\phi$ is a convex function on $\mathbb{R}^2$, i.e., $\phi(\alpha(\bar{\mu},\bar{x}) + (1-\alpha)(\tilde{\mu},\tilde{x})) \le \alpha\phi(\bar{\mu},\bar{x}) + (1-\alpha)\phi(\tilde{\mu},\tilde{x})$ for all $(\bar{\mu},\bar{x}),(\tilde{\mu},\tilde{x})\in \mathbb{R}^2$ and $\alpha \in [0,1];$

  \item [(iv)] $\phi$ is Lipschitz continuous on $\mathbb{R}^2;$

  \item [(v)] $\phi$ is strongly semismooth on $\mathbb{R}^2$.
\end{itemize}
\end{prop}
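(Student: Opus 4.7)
The plan is to dispatch (i)–(iv) by direct computation/observation and then focus the main effort on (v). For (i), I simply substitute $\mu=0$ to get $\phi(0,x)=\sqrt{x^{2}}-0=|x|$. For (ii), I note that $g(\mu,x):=\mu^{2}+x^{2}$ is a $C^{\infty}$ map into $[0,\infty)$ which is strictly positive on $\mathbb{R}^{2}\setminus\{(0,0)\}$; composing with $\sqrt{\cdot}$ (which is $C^{\infty}$ on $(0,\infty)$) and subtracting $\mu$ gives a $C^{\infty}$ function on that punctured domain, and the displayed partial derivatives are then just the chain rule.

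For (iii), I would observe that $\phi(\mu,x)=\|(\mu,x)\|_{2}-\mu$ is the Euclidean norm minus a linear functional; norms are convex by the triangle inequality and positive homogeneity, and subtracting a linear function preserves convexity. Property (iv) follows from the reverse triangle inequality $\bigl|\|u\|_{2}-\|v\|_{2}\bigr|\le\|u-v\|_{2}$ together with the fact that the coordinate projection $(\mu,x)\mapsto\mu$ is $1$-Lipschitz, so $\phi$ is $2$-Lipschitz on all of $\mathbb{R}^{2}$.

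The substantive point is (v), and here the plan splits at the origin. Away from $(0,0)$, part (ii) already gives a $C^{\infty}$ (in fact, real-analytic) function whose gradient is locally Lipschitz; for any $(\mu_{0},x_{0})\neq(0,0)$ and sufficiently small $d$, the Clarke Jacobian is the singleton $\{\nabla\phi((\mu_{0},x_{0})+d)\}$, so a first-order Taylor expansion of $\nabla\phi$ yields
\begin{equation*}
Vd-\phi'((\mu_{0},x_{0});d)=\bigl(\nabla\phi((\mu_{0},x_{0})+d)-\nabla\phi(\mu_{0},x_{0})\bigr)^{\top}d=O(\|d\|^{2}),
\end{equation*}
which is the definition of strong semismoothness (Lemma~\ref{lem:ssm}). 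At the origin, I would compute everything by hand: for $d=(d_{1},d_{2})\neq 0$, differentiability at $d$ gives $\partial\phi(d)=\{V\}$ with $V=\bigl(\tfrac{d_{1}}{\|d\|}-1,\ \tfrac{d_{2}}{\|d\|}\bigr)$, so $Vd=\|d\|-d_{1}$. The one-sided directional derivative is $\phi'((0,0);d)=\lim_{t\downarrow 0}\tfrac{\sqrt{t^{2}\|d\|^{2}}-td_{1}}{t}=\|d\|-d_{1}$, and the two quantities agree \emph{exactly}, so the $O(\|d\|^{2})$ remainder is trivially zero.

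The only step that demands genuine care is the origin case of (v): one must verify that $\phi$ is semismooth there before applying Lemma~\ref{lem:ssm}, which requires exhibiting the limit in the definition of semismoothness. Since $\nabla\phi$ is continuous away from $0$ and bounded (its norm is controlled by $2$), and the directional derivative at $0$ is the positively homogeneous function $d\mapsto\|d\|-d_{1}$, the required limit is straightforwardly $\|d\|-d_{1}$; that verification, together with the exact equality $Vd=\phi'((0,0);d)$ above, completes the argument.
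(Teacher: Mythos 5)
Your proof is correct and follows essentially the same route as the paper's: the key step (v) is handled identically, splitting at the origin, using smoothness away from $(0,0)$, and verifying the exact cancellation $Vd-\phi'((0,0);d)=0$ there. The only notable difference is cosmetic: for (iii) you invoke that $\phi=\|\cdot\|_2-\mu$ is a norm minus a linear functional (and for semismoothness at the origin you check the defining limit directly), whereas the paper proves convexity by an explicit squaring computation and then cites the fact that convex functions are semismooth; both are valid and yours is arguably cleaner.
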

\begin{proof}
The proofs of (i) and (ii) are trivial.

Now we turn to the result (iii). For any $(\bar{\mu},\bar{x}),(\tilde{\mu},\tilde{x})\in \mathbb{R}^2$ and $\alpha \in [0,1]$, we have
\begin{align}\nonumber
&\phi(\alpha(\bar{\mu},\bar{x}) + (1-\alpha)(\tilde{\mu},\tilde{x})) - \alpha\phi(\bar{\mu},\bar{x}) - (1-\alpha)\phi(\tilde{\mu},\tilde{x})\\\nonumber
&\qquad =\sqrt{[\alpha\bar{\mu} + (1-\alpha)\tilde{\mu}]^2 + [\alpha \bar{x} + (1-\alpha)\tilde{x}]^2} - \alpha\bar{\mu} - (1-\alpha)\tilde{\mu}\\\nonumber
&\qquad \qquad  - \alpha\sqrt{\bar{\mu}^2 + \bar{x}^2} + \alpha\bar{\mu} -(1-\alpha)\sqrt{\tilde{\mu}^2 + \tilde{x}^2} + (1-\alpha)\tilde{\mu}\\\label{eq:convex}
& \qquad = \sqrt{[\alpha\bar{\mu} + (1-\alpha)\tilde{\mu}]^2 + [\alpha \bar{x} + (1-\alpha)\tilde{x}]^2} - \alpha\sqrt{\bar{\mu}^2 + \bar{x}^2}-(1-\alpha)\sqrt{\tilde{\mu}^2 + \tilde{x}^2}.
\end{align}
On one hand,
\begin{align}\nonumber
&\left(\sqrt{[\alpha\bar{\mu} + (1-\alpha)\tilde{\mu}]^2 + [\alpha \bar{x} + (1-\alpha)\tilde{x}]^2}\right)^2 \\\label{eq:lhs}
&\qquad = \alpha^2(\bar{\mu}^2+ \bar{x}^2) + (1-\alpha)^2(\tilde{\mu}^2 + \tilde{x}^2) + 2\alpha(1-\alpha)(\bar{\mu}\tilde{\mu} + \bar{x}\tilde{x}).
\end{align}
On the other hand,
\begin{align}\nonumber
&\left[\alpha\sqrt{\bar{\mu}^2 + \bar{x}^2} + (1-\alpha)\sqrt{\tilde{\mu}^2 + \tilde{x}^2}\right]^2\\\nonumber
 &\qquad = \alpha^2(\bar{\mu}^2+ \bar{x}^2) + (1-\alpha)^2(\tilde{\mu}^2 + \tilde{x}^2) +  2\alpha(1-\alpha) \sqrt{(\bar{\mu}^2 + \bar{x}^2)(\tilde{\mu}^2 + \tilde{x}^2)}\\\label{eq:rhs}
&\qquad \ge \alpha^2(\bar{\mu}^2+ \bar{x}^2) + (1-\alpha)^2(\tilde{\mu}^2 + \tilde{x}^2) + 2\alpha(1-\alpha)|\bar{\mu}\tilde{\mu} + \bar{x}\tilde{x}|.
\end{align}
Then the result (iii) follows from \eqref{eq:convex}-\eqref{eq:rhs}.

Consider the result (iv). For any $(\bar{\mu},\bar{x}),(\tilde{\mu},\tilde{x})\in \mathbb{R}^2$, we have
\begin{align*}
|\phi(\bar{\mu},\bar{x}) - \phi(\tilde{\mu},\tilde{x})|&= |\|(\bar{\mu},\bar{x})\| - \bar{\mu} - \|(\tilde{\mu},\tilde{x})\| + \tilde{\mu}|\\
&\le |\|(\bar{\mu},\bar{x})\| - \|(\tilde{\mu},\tilde{x})\|| + |\bar{\mu}-\tilde{\mu}|\\
&\le \|(\bar{\mu}-\tilde{\mu},\bar{x}-\tilde{x})\| + \|(\bar{\mu}-\tilde{\mu},\bar{x}-\tilde{x})\| \\
&= 2 \| (\bar{\mu}-\tilde{\mu},\bar{x}-\tilde{x})\|.
\end{align*}
Hence, $\phi$ is Lipschitz continuous with Lipschitz constant $2$.

Finally, we prove the result (v). It follows from the result (iii) that $\phi$ is semismooth on $\mathbb{R}^2$~\cite{qi2000}. Note that $\phi$ is arbitrarily many times differentiable for all $(\mu,x)\in \mathbb{R}^2$ with $(\mu,x)\neq (0,0)$ and hence strongly semismooth at these points. Therefore, it is sufficient to show that it is strongly semismooth at $(0,0)$. For any $(\mu,x)\in \mathbb{R}^2\backslash \{(0,0)\}$, $\phi$ is differentiable at $(\mu,x)$, and hence, $\partial\phi(\mu,x) = \nabla \phi(\mu,x) = \left[\frac{\partial \phi(\mu,x)}{\partial \mu},\frac{\partial \phi(\mu,x)}{\partial x}\right]^\top$. In addition, by Lemma~\ref{lem:dd}, the classic directional derivative of $\phi$ at $(0,0)$ exists and
$$
\phi'((0,0);(\mu,x)) = \lim_{t\downarrow 0} \frac{\phi((0,0) + t(\mu,x)) - \phi(0,0)}{t}
=\phi(\mu,x),
$$
from which we have
\begin{align*}
&\phi(\mu,x) - \left[\frac{\partial \phi(\mu,x)}{\partial \mu},\frac{\partial \phi(\mu,x)}{\partial x}\right]\left[\begin{array}{c}\mu\\x\end{array}\right]\\
&\qquad = \sqrt{\mu^2 + x^2} - \mu - \left(\frac{\mu}{\sqrt{\mu^2 + x^2}}-1\right)\mu - \frac{x}{\sqrt{\mu^2 + x^2}}x\\
&\qquad = 0\\
&\qquad = O(\|(\mu,x)\|^2).
\end{align*}
Then the result follows from Lemma~\ref{lem:ssm}.
\end{proof}

\subsection{The reformulation of GAVE~\eqref{eq:gave}}\label{subsec:reformu}
In this subsection, based on the earlier subsection, we will give a reformulation of GAVE~\eqref{eq:gave} and explore some of its properties.

Let $z := (\mu,x) \in \mathbb{R}\times \mathbb{R}^{n}$, we first define the function $H:\mathbb{R}\times \mathbb{R}^n \rightarrow \mathbb{R}\times \mathbb{R}^n$ as
\begin{equation}\label{eq:sf4ave}
H(z) := \left[ \begin{array}{c} \mu \\
Ax + B\Phi(\mu,x) - b \end{array}\right],
\end{equation}
where $\Phi:\mathbb{R}^{n+1} \rightarrow \mathbb{R}^n$ is defined by
\begin{equation*}
\Phi(\mu,x) := \left[ \begin{array}{c} \phi(\mu,x_1) \\
\phi(\mu,x_2)\\
\vdots\\
\phi(\mu,x_n) \end{array}\right]
\end{equation*}
with $\phi$ being the smoothing function given in \eqref{eq:smooth}. According to Proposition~\ref{prop:prop1}~(i), it holds that
\begin{equation}\label{eq:gavevsne}
H(z) = 0\quad \Leftrightarrow\quad \mu=0~\text{and}~x~\text{is a solution of GAVE~\eqref{eq:gave}}.
\end{equation}
Then it follows from \eqref{eq:gavevsne} that solving GAVE~\eqref{eq:gave} is equivalent to solving the system of nonlinear equations $H(z) = 0$. Before giving the algorithm for solving $H(z) = 0$, we will give some properties of the function $H$.

\begin{prop}\label{prop:prop2}
Let $H$ be defined by \eqref{eq:sf4ave}, then we have
\begin{itemize}
  \item [(i)] $H$ is continuously differentiable on $\mathbb{R}^{n+1}\backslash \{0\}$, and when $\mu=0$ and $x_i\neq 0$ (for all $i=1,2,\cdots,n$) or $\mu\neq 0$, the Jacobian matrix of $H$ is given by
      \begin{equation}\label{eq:jh}
H'(z) = \left[\begin{array}{cc}
1& 0\\
BV_1 & A + BV_2
\end{array}\right]
\end{equation}
with
\begin{equation}\label{eq:vd}
V_1 = \left[\begin{array}{c}
\frac{\mu}{\sqrt{\mu^2 + x_1^2}} - 1\\
\frac{\mu}{\sqrt{\mu^2 + x_2^2}} - 1\\
\vdots\\
\frac{\mu}{\sqrt{\mu^2 + x_n^2}} - 1
\end{array}\right], \quad
V_2 = \left[\begin{array}{cccc}
\frac{x_1}{\sqrt{\mu^2 + x_1^2}} & 0 & 0 & 0\\
0 & \frac{x_2}{\sqrt{\mu^2 + x_2^2}} &0 &0\\
\vdots &\vdots &\ddots &\vdots\\
0&0&0&\frac{x_n}{\sqrt{\mu^2 + x_n^2} }
\end{array}\right];
\end{equation}

  \item [(ii)] $H$ is strongly semismooth on $\mathbb{R}^{n+1}.$
\end{itemize}
\end{prop}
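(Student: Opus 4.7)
The plan is to reduce both parts of the proposition to the corresponding scalar properties of $\phi$ established in Proposition~\ref{prop:prop1}. For part (i), I would first observe that $H$ is assembled from three pieces: the coordinate projection $(\mu,x)\mapsto\mu$, the affine map $x\mapsto Ax-b$, and the componentwise map $\Phi(\mu,x)=\vecv(\phi(\mu,x_i))$. The first two are smooth on all of $\mathbb{R}^{n+1}$, so continuous differentiability of $H$ at a point $z=(\mu,x)$ reduces to that of each scalar $\phi(\mu,x_i)$. By Proposition~\ref{prop:prop1}(ii), $\phi$ is continuously differentiable on $\mathbb{R}^{2}\setminus\{(0,0)\}$; hence each $\phi(\mu,x_i)$ is continuously differentiable in a neighborhood of $z$ precisely when $(\mu,x_i)\neq(0,0)$, a condition that holds simultaneously for all $i$ exactly in the two regimes highlighted in the statement (either $\mu\neq 0$, or $\mu=0$ with every $x_i\neq 0$).

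Having settled differentiability, I would then apply the chain rule and the explicit partials $\partial\phi/\partial\mu=\mu/\sqrt{\mu^2+x^2}-1$ and $\partial\phi/\partial x=x/\sqrt{\mu^2+x^2}$ from Proposition~\ref{prop:prop1}(ii), using that $\phi(\mu,x_i)$ depends on the $x$-coordinates only through $x_i$. The first row of $H'(z)$ is $[1,\,0]$ because the first component of $H$ is just $\mu$; the lower block has a vector contribution $BV_1$ from differentiating $\Phi$ in $\mu$ and a matrix contribution $A+BV_2$ from differentiating $Ax+B\Phi$ in $x$, where $V_2$ is diagonal because of the componentwise coupling. This reproduces the block Jacobian in \eqref{eq:jh}.

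For part (ii), I would invoke two standard closure properties of the class of strongly semismooth functions: closure under composition with continuously differentiable (in particular linear) maps, and closure under finite Cartesian products and affine combinations. Proposition~\ref{prop:prop1}(v) provides strong semismoothness of $\phi$ on $\mathbb{R}^{2}$. Each scalar $\phi(\mu,x_i)$ is $\phi$ composed with the smooth linear projection $(\mu,x)\mapsto(\mu,x_i)$, hence is strongly semismooth on $\mathbb{R}^{n+1}$. Stacking the $n$ such components yields $\Phi$, and $H$ is then obtained from $\Phi$ by left-multiplication by the constant matrix $B$, addition of the smooth affine term $Ax-b$, and prepending the smooth coordinate $\mu$; all of these operations preserve strong semismoothness, giving the conclusion on all of $\mathbb{R}^{n+1}$.

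The only delicate point worth flagging is the behavior at ``mixed'' points where $\mu=0$ coincides with some but not all $x_i=0$. Such points sit inside $\mathbb{R}^{n+1}\setminus\{0\}$, yet ordinary differentiability genuinely fails there because $\phi$ is not differentiable at $(0,0)$; accordingly the Jacobian formula \eqref{eq:jh} is not asserted at those points. Strong semismoothness, however, survives at these exceptional points by the closure argument above, so part (ii) holds globally without exception. The rest is routine bookkeeping.
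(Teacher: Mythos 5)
Your proof is correct and follows essentially the same route as the paper's: part (i) reduces to Proposition~\ref{prop:prop1}(ii) plus the chain rule applied componentwise, and part (ii) combines Proposition~\ref{prop:prop1}(v) with the standard closure of strong semismoothness under composition with smooth maps and under componentwise/affine assembly. Your added remark that ordinary differentiability genuinely fails at points where $\mu=0$ and some but not all $x_i=0$ (so the clause ``continuously differentiable on $\mathbb{R}^{n+1}\backslash\{0\}$'' must be read as restricted to the set where $(\mu,x_i)\neq(0,0)$ for every $i$) is a worthwhile clarification that the paper's one-line argument glosses over.
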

\begin{proof}
The result (i) holds from Proposition~\ref{prop:prop1}~(ii).

Now we turn to prove the result (ii). Since $H$ is strongly semismooth on $\mathbb{R}^{n+1}$ if and only if its component function $H_i$, $i=1,2,\cdots,n$, are \cite{qi2000}, and the composition of strongly semismooth functions is a strongly semismooth function \cite[Theorem~19]{fischer1997}, the result (ii) follows from Proposition~\ref{prop:prop1}~(v) and the fact that a continuously differentiable function with a Lipschitz continuous gradient is strongly semismooth \cite{hu2009}.
\end{proof}

\subsection{The non-monotone smoothing Newton algorithm for GAVE~\eqref{eq:gave}}\label{subsuc:alg}
Now we are in position to develop a non-monotone smoothing Newton algorithm to solve the system of nonlinear equations $H(z) = 0$, and so is GAVE~\eqref{eq:gave}.

Let $H(z)$ be given in \eqref{eq:sf4ave} and define the merit function $\mathcal{M}:\mathbb{R}\times \mathbb{R}^n \rightarrow \mathbb{R}_+$ by
\begin{equation*}%\label{eq:merit}
\mathcal{M}(z) := \|H(z)\|^2.
\end{equation*}
Clearly, solving the system of nonlinear equations $H(z) = 0$ is equivalent to solving the following unconstrained optimization problem
\begin{equation*}%\label{eq:optim}
\min_{z\in \mathbb{R}^{n+1}}~\mathcal{M}(z)
\end{equation*}
with the vanished objective function value. We now propose a non-monotone smoothing Newton algorithm to solve $H(z) = 0$ by minimizing the merit function $\mathcal{M}(z)$, which is described in Algorithm~\ref{alg:NSNA}.

\begin{algorithm}
\caption{A non-monotone smoothing Newton algorithm (NSNA) for GAVE~\eqref{eq:gave}}
\label{alg:NSNA}
\begin{algorithmic}[1]
\STATE Choose $\theta,\,\delta \in (0,1)$ and $z^{(0)}:= (\mu^{(0)},x^{(0)})\in \mathbb{R}_{++}\times \mathbb{R}^n$. Let $\mathcal{C}^{(0)}:= \mathcal{M}(z^{(0)})$. Choose $\gamma \in (0,1)$ such that $\beta^{(0)} = \gamma\mathcal{C}^{(0)}< \mu^{(0)}$ and $\gamma\mu^{(0)}<1$. Set $k:= 0$.

\STATE If $\|H(z^{(k)})\|=0$, then stop. Else, compute the search direction $\Delta z^{(k)} = (\Delta \mu^{(k)}, \Delta x^{(k)})\in \mathbb{R}\times \mathbb{R}^n$ by solving the perturbed Newton system:
\begin{equation}\label{eq:ns}
H'(z^{(k)}) \Delta z^{(k)} = - H(z^{(k)}) + \beta^{(k)} e^{(1)},
\end{equation}
where $e^{(1)} = [1,0]^\top\in \mathbb{R}\times \mathbb{R}^n$. If $\Delta z^{(k)}$ satisfies
\begin{equation}\label{ie:hh}
\|H(z^{(k)} + \Delta z^{(k)}) \| \le \theta \|H(z^{(k)})\|,
\end{equation}
then set $z^{(k+1)} := z^{(k)} + \Delta z^{(k)}$ and go to step~$4$. Otherwise, go to step~$3$.

\STATE Let $\alpha^{(k)}$ be the maximum of the values $1,\delta,\delta^2,\cdots$ such that
\begin{equation}\label{ie:nml}
\mathcal{M}(z^{(k)} + \alpha^{(k)} \Delta z^{(k)}) \le \mathcal{C}^{(k)} - \gamma \|\alpha^{(k)} \Delta z^{(k)}\|^2.
\end{equation}
Set $z^{(k+1)} := z^{(k)} + \alpha^{(k)} \Delta z^{(k)}$ and go to step~$4$.

\STATE Compute $\mathcal{M}(z^{(k+1)}) = \|H(z^{(k+1)})\|^2$ and set
\begin{equation}\label{eq:ck}
\mathcal{C}^{(k+1)} := \frac{(\mathcal{C}^{(k)} + 1)\mathcal{M}(z^{(k+1)})}{\mathcal{M}(z^{(k+1)}) + 1},\quad \beta^{(k+1)} := \gamma \mathcal{C}^{(k+1)}.
\end{equation}

\STATE Set $k:= k+1$ and go to step $2$.
\end{algorithmic}
\end{algorithm}

\begin{rem}
The development of Algorithm~\ref{alg:NSNA} is inspired by the non-monotone smoothing Newton algorithm for the weighted complementarity problem \cite{tang2021} and the non-monotone Levenberg-Marquardt type method for the weighted nonlinear complementarity problem \cite{tang2021q}.
\end{rem}

Before ending this section, we will show that Algorithm~\ref{alg:NSNA} is well-defined. To this end, we need the following lemma.

\begin{lem}\label{thm:ns}
Let $H'(z)$ be defined by \eqref{eq:jh} and \eqref{eq:vd}. If Assumption~\ref{assu:as} holds, then $H'(z)$ is nonsingular at any $z=(\mu,x)\in \mathbb{R}_{++}\times \mathbb{R}^n$.
\end{lem}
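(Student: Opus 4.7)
The plan is to exploit the block lower-triangular structure of $H'(z)$ displayed in \eqref{eq:jh}. Since the $(1,1)$ block is the scalar $1$ and the $(1,2)$ block is zero, a cofactor/Schur-complement expansion gives $\det H'(z) = \det(A + B V_2)$, so the problem reduces to showing that $A + B V_2$ is nonsingular whenever $\mu > 0$.

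Next I would examine $V_2$ from \eqref{eq:vd}. It is diagonal with entries $d_i := x_i / \sqrt{\mu^2 + x_i^2}$. For $\mu > 0$ we have $x_i^2 < \mu^2 + x_i^2$ strictly, hence $|d_i| < 1$, so in particular $d_i \in [-1,1]$ for every $i$. Thus $V_2$ belongs to the family of diagonal matrices $D = \diag(d_i)$ with $d_i \in [-1,1]$ considered in Lemma~\ref{lem:ns}.

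Invoking Lemma~\ref{lem:ns} under Assumption~\ref{assu:as}, we conclude that $A + B V_2$ is nonsingular, and therefore $\det H'(z) = \det(A + B V_2) \neq 0$, proving nonsingularity of $H'(z)$ at every $z = (\mu,x) \in \mathbb{R}_{++} \times \mathbb{R}^n$.

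There is really no hard step here: the proof is a two-line reduction once one notices the block-triangular structure and applies Lemma~\ref{lem:ns}. The only thing to be careful about is that the restriction $\mu > 0$ is used exactly to guarantee $|d_i| < 1$ (so that $V_2$ lies inside the hypothesis range of Lemma~\ref{lem:ns}); at $\mu = 0$ with some $x_i = 0$ the entry $d_i$ is undefined, which is why the statement is restricted to $\mathbb{R}_{++} \times \mathbb{R}^n$.
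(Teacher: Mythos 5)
Your proof is correct and follows exactly the paper's own argument: reduce to the nonsingularity of $A+BV_2$ via the block lower-triangular structure of $H'(z)$, observe that the diagonal entries of $V_2$ satisfy $\bigl|x_i/\sqrt{\mu^2+x_i^2}\bigr|<1$ when $\mu>0$, and invoke Lemma~\ref{lem:ns}. No differences worth noting.
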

\begin{proof}
From \eqref{eq:jh}, we need only to show that $A + BV_2$ is nonsingular. Since Assumption~\ref{assu:as} holds and $\left|\frac{x_i}{\sqrt{x_i^2 + \mu^2}}\right|<1 (i = 1,2,\cdots,n)$, the result immediately follows from Lemma~\ref{lem:ns}.
\end{proof}

Then we have the following theorem.

\begin{thm}\label{thm:wd}
If Assumption~\ref{assu:as} holds, Algorithm~\ref{alg:NSNA} is well defined and either terminates in finitely many steps or generates an infinite sequence $\{z^{(k)}\}$ satisfying $\mathcal{M}(z^{(k)}) \le \mathcal{C}^{(k)}$, $\mu^{(k)} >0$ and $\beta^{(k)} <\mu^{(k)}$ for all $k\ge 0$.
\end{thm}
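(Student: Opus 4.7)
The plan is to prove the theorem by induction on $k$, with the three invariants $\mu^{(k)}>0$, $\beta^{(k)}<\mu^{(k)}$, and $\mathcal{M}(z^{(k)})\le \mathcal{C}^{(k)}$ as the induction hypothesis, and to view well-definedness as a consequence at each step. The base case $k=0$ is immediate from the initialization: $\mu^{(0)}>0$ and $\gamma$ is chosen so that $\beta^{(0)}=\gamma\mathcal{C}^{(0)}<\mu^{(0)}$, while $\mathcal{C}^{(0)}=\mathcal{M}(z^{(0)})$ gives equality in the third invariant.

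For the inductive step, assume the invariants hold at iteration $k$ and the algorithm has not terminated. First, since $\mu^{(k)}>0$, Lemma~\ref{thm:ns} guarantees that $H'(z^{(k)})$ is nonsingular, so the perturbed Newton system \eqref{eq:ns} has a unique solution $\Delta z^{(k)}$. Second, I need to show the backtracking in step~$3$ halts after finitely many reductions whenever it is invoked. The key calculation is to read off the first component of \eqref{eq:ns}, which yields $\Delta\mu^{(k)}=\beta^{(k)}-\mu^{(k)}$, and then use $\nabla \mathcal{M}(z^{(k)}) = 2H'(z^{(k)})^{\top}H(z^{(k)})$ together with \eqref{eq:ns} to obtain
\begin{equation*}
\nabla \mathcal{M}(z^{(k)})^{\top}\Delta z^{(k)} = 2H(z^{(k)})^{\top}\bigl[-H(z^{(k)})+\beta^{(k)}e^{(1)}\bigr] = -2\mathcal{M}(z^{(k)})+2\beta^{(k)}\mu^{(k)}.
\end{equation*}
The induction hypothesis $\beta^{(k)}<\mu^{(k)}$ together with $\mathcal{M}(z^{(k)})\ge (\mu^{(k)})^{2}$ forces $\beta^{(k)}\mu^{(k)}<\mathcal{M}(z^{(k)})$, so $\Delta z^{(k)}$ is a strict descent direction for $\mathcal{M}$ at $z^{(k)}$. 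Combined with $\mathcal{M}(z^{(k)})\le \mathcal{C}^{(k)}$, a standard Taylor expansion around $\alpha=0$ shows that \eqref{ie:nml} holds for all sufficiently small $\alpha>0$, so the Armijo loop terminates with some $\alpha^{(k)}=\delta^{m}$.

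Third, I will propagate the three invariants to $k+1$. From $\Delta\mu^{(k)}=\beta^{(k)}-\mu^{(k)}$, the updated iterate satisfies $\mu^{(k+1)}=(1-\alpha^{(k)})\mu^{(k)}+\alpha^{(k)}\beta^{(k)}$ (with $\alpha^{(k)}=1$ in the full-step case), a convex combination of two positive numbers lying in $[\beta^{(k)},\mu^{(k)}]$; in particular $\mu^{(k+1)}\ge \beta^{(k)}>0$. Next I show the sharper fact $\mathcal{M}(z^{(k+1)})<\mathcal{C}^{(k)}$: in the full-step branch it comes from \eqref{ie:hh}, which gives $\mathcal{M}(z^{(k+1)})\le\theta^{2}\mathcal{M}(z^{(k)})<\mathcal{M}(z^{(k)})\le\mathcal{C}^{(k)}$; in the line-search branch it comes from \eqref{ie:nml} together with $\Delta z^{(k)}\ne 0$ (otherwise \eqref{eq:ns} would force $\mu^{(k)}=\beta^{(k)}$, contradicting the hypothesis). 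An elementary manipulation of the definition \eqref{eq:ck} then shows that $\mathcal{M}(z^{(k+1)})<\mathcal{C}^{(k)}$ is equivalent to $\mathcal{C}^{(k+1)}<\mathcal{C}^{(k)}$, and also implies $\mathcal{M}(z^{(k+1)})\le \mathcal{C}^{(k+1)}$. Consequently $\beta^{(k+1)}=\gamma\mathcal{C}^{(k+1)}<\gamma\mathcal{C}^{(k)}=\beta^{(k)}\le\mu^{(k+1)}$, closing the induction.

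The main obstacle is getting the three invariants to form a self-reinforcing chain: the descent identity above needs $\beta^{(k)}<\mu^{(k)}$ in order to terminate the line search, yet $\beta^{(k+1)}<\mu^{(k+1)}$ has to be re-established from scratch at the next step using the monotonicity of $\mathcal{C}^{(k)}$, which in turn depends on the strict inequality $\mathcal{M}(z^{(k+1)})<\mathcal{C}^{(k)}$. The crucial observation that makes this chain close is that the first row of the Newton system pins down $\mu^{(k+1)}$ as a convex combination of $\mu^{(k)}$ and $\beta^{(k)}$, bounding it below by $\beta^{(k)}$ while the update \eqref{eq:ck} strictly decreases $\beta^{(k)}$.
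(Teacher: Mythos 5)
Your proposal is correct and follows essentially the same route as the paper: induction on the three invariants, nonsingularity of $H'(z^{(k)})$ from Lemma~\ref{thm:ns}, the identity $\mathcal{M}'(z^{(k)})\Delta z^{(k)}=-2\mathcal{M}(z^{(k)})+2\mu^{(k)}\beta^{(k)}<0$ to terminate the line search, and the chain $\mathcal{M}(z^{(k+1)})<\mathcal{C}^{(k)}\Rightarrow\mathcal{C}^{(k+1)}<\mathcal{C}^{(k)}\Rightarrow\beta^{(k+1)}<\beta^{(k)}\le\mu^{(k+1)}$. The only (cosmetic) difference is that you argue the finite termination of the backtracking directly via a Taylor expansion rather than by the paper's contradiction argument, and you should note explicitly that $\beta^{(k)}=\gamma\mathcal{C}^{(k)}>0$ (because $\mathcal{C}^{(k)}\ge\mathcal{M}(z^{(k)})>0$ when the algorithm has not terminated), which you use implicitly when calling $\mu^{(k+1)}$ a convex combination of positive numbers.
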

\begin{proof}
We will prove it by mathematical induction. Suppose that $\mathcal{M}(z^{(k)}) \le \mathcal{C}^{(k)}$, $\mu^{(k)} >0$ and $\beta^{(k)} <\mu^{(k)}$ for some $k$. Since $\mu^{(k)} >0$, it follows from Lemma~\ref{thm:ns} that $H'(z^{(k)})$ is nonsingular. Hence, $\Delta z^{(k)}$ can be uniquely determined by \eqref{eq:ns}.
If $\|H(z^{(k)})\|=0$, then Algorithm~\ref{alg:NSNA} terminates. Otherwise, $\|H(z^{(k)})\|^2 = \mathcal{M}(z^{(k)}) \le \mathcal{C}^{(k)}$ implies that $\mathcal{C}^{(k)}>0$, from which and the second equation in \eqref{eq:ck} we have $\beta^{(k)} = \gamma \mathcal{C}^{(k)}>0$. In the following, we divide our proof in three parts.

Firstly, we will show that $\mu^{(k+1)}>0$. On one hand, if $z^{(k+1)}$ is generated by step $2$, it follows from \eqref{eq:ns} that $\mu^{(k+1)} = \mu^{(k)} + \Delta \mu^{(k)} = \mu^{(k)} + (-\mu^{(k)} + \beta^{(k)}) = \beta^{(k)} >0$. On the other hand, if $z^{(k+1)}$ is generated by step $3$, we first show that there exists at least a nonnegative integer $l$ satisfying~\eqref{ie:nml}. On the contrary, for any nonnegative integer $l$, we have
\begin{equation}\label{ie:cnml}
\mathcal{M}(z^{(k)} + \delta^l \Delta z^{(k)}) > \mathcal{C}^{(k)} - \gamma \|\delta^l \Delta z^{(k)}\|^2,
\end{equation}
which together with $\mathcal{M}(z^{(k)}) \le \mathcal{C}^{(k)}$ gives
\begin{equation*}
\frac{\mathcal{M}(z^{(k)} + \delta^l \Delta z^{(k)})-\mathcal{M}(z^{(k)} )}{\delta^l} + \gamma \delta^l\| \Delta z^{(k)}\|^2>0.
\end{equation*}
Since $\mathcal{M}$ is differentiable at $z^{(k)}$ and $\delta\in (0,1)$, by letting $l\rightarrow +\infty$ in the above inequality, we have
\begin{equation}\label{ie:mdz}
\mathcal{M}'(z^{(k)} )\Delta z^{(k)} \ge 0.
\end{equation}
In addition, from \eqref{eq:ns} we have
\begin{align}\nonumber
\mathcal{M}'(z^{(k)} )\Delta z^{(k)} &= 2 H(z^{(k)})^\top H'(z^{(k)}) \Delta z^{(k)} \\\nonumber
&= -2\|H(z^{(k)})\|^2 + 2\mu^{(k)} \beta^{(k)}  \\\label{eq:ms}
&= 2\mu^{(k)}(\beta^{(k)}-\mu^{(k)}) - 2 \|Ax^{(k)} + B\Phi(\mu^{(k)},x^{(k)})-b\|^2.
\end{align}
Since $\mu^{(k)} >0$ and $\beta^{(k)} <\mu^{(k)}$,  \eqref{eq:ms} implies that $\mathcal{M}'(z^{(k)} )\Delta z^{(k)}<0$, which contradicts to~\eqref{ie:mdz}. Therefore, there exists $\alpha^{(k)}\in (0,1]$ such that $z^{(k+1)} = z^{(k)} + \alpha^{(k)} \Delta z^{(k)}$ in step $3$. In this case, it follows from~\eqref{eq:ns} that $\mu^{(k+1)} = (1-\alpha^{(k)})\mu^{(k)} + \alpha^{(k)} \beta^{(k)} >0$.

Secondly, we will show that $\mathcal{M}(z^{(k+1)}) < \mathcal{C}^{(k+1)}$. Indeed, if $z^{(k+1)}$ is generated by step~$2$, then it follows from $\theta \in (0,1)$ and \eqref{ie:hh} that $\mathcal{M}(z^{(k+1)}) \le \theta^2 \mathcal{M}(z^{(k)}) < \mathcal{M}(z^{(k)}) \le \mathcal{C}^{(k)}$. Otherwise, by step $3$, we can also obtain $\mathcal{M}(z^{(k+1)})< \mathcal{C}^{(k)}$. In fact, $\beta^{(k)} < \mu^{(k)}$ implies that $\Delta z^{(k)} \neq 0$. Thereby, \eqref{ie:nml} implies that $\mathcal{M}(z^{(k+1)})< \mathcal{C}^{(k)}$. Consequently, $\mathcal{M}(z^{(k+1)})< \mathcal{C}^{(k)}$ and the first equation in \eqref{eq:ck} imply
$$
\mathcal{C}^{(k+1)} = \frac{(\mathcal{C}^{(k)} + 1)\mathcal{M}(z^{(k+1)})}{\mathcal{M}(z^{(k+1)}) + 1}> \frac{(\mathcal{M}^{(k+1)} + 1)\mathcal{M}(z^{(k+1)})}{\mathcal{M}(z^{(k+1)}) + 1} = \mathcal{M}(z^{(k+1)}).
$$

Finally, we will show  that $\mu^{(k+1)} > \beta^{(k+1)}$. As mentioned earlier, we have $\mu^{(k+1)} = \beta^{(k)}$ by step~$2$ and $\mu^{(k+1)} = (1-\alpha^{(k)})\mu^{(k)} + \alpha^{(k)} \beta^{(k)}$ by step~$3$, respectively. For the latter, since $\alpha^{(k)}\in (0,1]$ and $\mu^{(k)} >\beta^{(k)} >0$, $\mu^{(k+1)} = (1-\alpha^{(k)})\mu^{(k)} + \alpha^{(k)} \beta^{(k)} \ge (1-\alpha^{(k)})\beta^{(k)} + \alpha^{(k)} \beta^{(k)} = \beta^{(k)}$. In a word, $\mu^{(k+1)} \ge \beta^{(k)}$. In addition, it follows from $\mathcal{M}(z^{(k+1)}) < \mathcal{C}^{(k)}$ and the first equation in \eqref{eq:ck} that $$0\le \mathcal{C}^{(k+1)} =\frac{\mathcal{C}^{(k)}\mathcal{M}(z^{(k+1)}) + \mathcal{M}(z^{(k+1)})}{\mathcal{M}(z^{(k+1)}) + 1}< \frac{\mathcal{C}^{(k)}\mathcal{M}(z^{(k+1)}) + \mathcal{C}^{(k)}}{\mathcal{M}(z^{(k+1)}) + 1} = \mathcal{C}^{(k)},$$
from which and $\gamma>0$ we obtain $\mu^{(k+1)}\ge \beta^{(k)} = \gamma\mathcal{C}^{(k)} > \gamma\mathcal{C}^{(k+1)} = \beta^{(k+1)}$.

The proof is completed by letting $\mathcal{M}(z^{(0)}) \le \mathcal{C}^{(0)}$, $\mu^{(0)} >0$ and $\beta^{(0)} <\mu^{(0)}$.
\end{proof}

\begin{rem}
We should mention that the equation \eqref{eq:ms} plays the key role in the proof of Theorem~\ref{thm:wd}. This equation motivates us to develop the algorithm with the property $\beta^{(k)} <\mu^{(k)}$, which is slightly different from that given in \cite{tang2021} ($\beta^{(k)} \le \mu^{(k)}$ was proved there).
\end{rem}

\section{Convergence analysis}\label{sec:convergence}
In this section, we will analyze the convergence of Algorithm~\ref{alg:NSNA}. In what follows, we assume that $\|H(z^{(k)})\|\neq 0$ for all $k\ge 0$. To establish the global convergence of Algorithm~\ref{alg:NSNA}, we need the following lemmas.

\begin{lem}\label{lem:cc}
Suppose that Assumption~\ref{assu:as} holds. Let $\{z^{(k)} =(\mu^{(k)},x^{(k)})\}$ be the iteration sequence generated by Algorithm~\ref{alg:NSNA}. Then $\mathcal{C}^{(k)} >\mathcal{C}^{(k+1)}$ and $\mu^{(k)} >\mu^{(k+1)}$ for all $k\ge 0$.
\end{lem}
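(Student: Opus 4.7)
The plan is to assemble this lemma almost entirely from ingredients already forged in the proof of Theorem~\ref{thm:wd}; the argument is a clean consequence of the update rule for $\mathcal{C}^{(k)}$ together with the two step-dependent formulas for $\mu^{(k+1)}$. I would use induction on $k$, with the inductive hypothesis being the conclusion of Theorem~\ref{thm:wd}, namely $\mathcal{M}(z^{(k)})\le \mathcal{C}^{(k)}$, $\mu^{(k)}>0$, and $\beta^{(k)}<\mu^{(k)}$; since Algorithm~\ref{alg:NSNA} is assumed not to terminate ($\|H(z^{(k)})\|\neq 0$), all quantities are strictly positive.

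First I would handle $\mathcal{C}^{(k)}>\mathcal{C}^{(k+1)}$. The proof of Theorem~\ref{thm:wd} already established the strict inequality $\mathcal{M}(z^{(k+1)})<\mathcal{C}^{(k)}$ in both branches (step~2 via \eqref{ie:hh} with $\theta\in(0,1)$, and step~3 via the non-monotone Armijo-type condition \eqref{ie:nml} together with $\Delta z^{(k)}\neq 0$). Plugging this into the update \eqref{eq:ck} gives
\begin{equation*}
\mathcal{C}^{(k+1)}=\frac{\mathcal{C}^{(k)}\mathcal{M}(z^{(k+1)})+\mathcal{M}(z^{(k+1)})}{\mathcal{M}(z^{(k+1)})+1}<\frac{\mathcal{C}^{(k)}\mathcal{M}(z^{(k+1)})+\mathcal{C}^{(k)}}{\mathcal{M}(z^{(k+1)})+1}=\mathcal{C}^{(k)},
\end{equation*}
which is exactly the second displayed inequality in that earlier proof and yields $\mathcal{C}^{(k)}>\mathcal{C}^{(k+1)}$.

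Next I would dispatch $\mu^{(k)}>\mu^{(k+1)}$ by considering the two branches separately, exactly as in Theorem~\ref{thm:wd}. If $z^{(k+1)}$ comes from step~2, then \eqref{eq:ns} forces $\mu^{(k+1)}=\mu^{(k)}+\Delta\mu^{(k)}=\beta^{(k)}$, and the inductive hypothesis $\beta^{(k)}<\mu^{(k)}$ immediately gives $\mu^{(k+1)}<\mu^{(k)}$. If $z^{(k+1)}$ comes from step~3, then $\mu^{(k+1)}=(1-\alpha^{(k)})\mu^{(k)}+\alpha^{(k)}\beta^{(k)}=\mu^{(k)}-\alpha^{(k)}(\mu^{(k)}-\beta^{(k)})$, and since $\alpha^{(k)}\in(0,1]$ and $\mu^{(k)}-\beta^{(k)}>0$, the same strict inequality follows.

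I do not foresee a genuine obstacle here: every piece needed is already recorded in, or is an immediate consequence of, the proof of Theorem~\ref{thm:wd}. The only minor point to be careful about is to invoke $\Delta z^{(k)}\neq 0$ in the step~3 case so that the inequality \eqref{ie:nml} is strict (not merely weak), thereby giving strict monotonicity $\mathcal{M}(z^{(k+1)})<\mathcal{C}^{(k)}$ rather than $\le$; this is guaranteed by $\beta^{(k)}<\mu^{(k)}$, which implies that the right-hand side of \eqref{eq:ns} is nonzero.
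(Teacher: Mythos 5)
Your proof is correct and follows essentially the same route as the paper's: the paper likewise obtains $\mathcal{C}^{(k)}>\mathcal{C}^{(k+1)}$ by pointing back to the displayed inequality in the proof of Theorem~\ref{thm:wd} (which rests on $\mathcal{M}(z^{(k+1)})<\mathcal{C}^{(k)}$ in both branches), and proves $\mu^{(k)}>\mu^{(k+1)}$ by the same two-case analysis using $\mu^{(k)}>\beta^{(k)}$. Your extra remark about $\Delta z^{(k)}\neq 0$ guaranteeing strictness in the step~3 case is exactly the justification the paper records inside the proof of Theorem~\ref{thm:wd}.
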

\begin{proof}
The proof of $\mathcal{C}^{(k)} >\mathcal{C}^{(k+1)}$ for all $k\ge 0$ can be found in the proof of Theorem~\ref{thm:wd}. It follows from Theorem~\ref{thm:wd} that $\mu^{(k)} > \beta^{(k)}$ for all $k\ge 0$. Then, by step~$2$, we have $\mu^{(k+1)} = \beta^{(k)} < \mu^{(k)}$. By step~$3$, $\mu^{(k+1)} = (1-\alpha^{(k)})\mu^{(k)} + \alpha^{(k)} \beta^{(k)} < (1-\alpha^{(k)})\mu^{(k)} + \alpha^{(k)} \mu^{(k)} = \mu^{(k)}$.
The proof is complete.
\end{proof}

\begin{lem}\label{lem:levelbounded}
If Assumption~\ref{assu:as} holds, then $\{z^{(k)}\}$ generated by Algorithm~\ref{alg:NSNA} is bounded.
\end{lem}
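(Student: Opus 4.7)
The plan is a proof by contradiction, combining the monotonicity results established in Lemma~\ref{lem:cc} and Theorem~\ref{thm:wd} with the uniqueness characterization in Lemma~\ref{lem:ns}.

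\textbf{Step 1 (boundedness of the scalar components).} By Lemma~\ref{lem:cc}, $\{\mu^{(k)}\}$ is strictly decreasing and by Theorem~\ref{thm:wd} it stays positive, so $0<\mu^{(k)}\le \mu^{(0)}$ for every $k$. Analogously, $\{\mathcal{C}^{(k)}\}$ is strictly decreasing and nonnegative, and Theorem~\ref{thm:wd} gives $\mathcal{M}(z^{(k)})\le \mathcal{C}^{(k)}\le \mathcal{C}^{(0)}$. From the definition of $H$ in \eqref{eq:sf4ave}, this yields a uniform bound on $\|Ax^{(k)}+B\Phi(\mu^{(k)},x^{(k)})-b\|$.

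\textbf{Step 2 (contradiction hypothesis and normalization).} Suppose, for contradiction, that $\{x^{(k)}\}$ is unbounded. Pass to a subsequence (still indexed by $k$) such that $\|x^{(k)}\|\to\infty$, and set $y^{(k)}:=x^{(k)}/\|x^{(k)}\|$. Since $\|y^{(k)}\|=1$, a further subsequence converges to some $y^*\in \mathbb{R}^n$ with $\|y^*\|=1$. Dividing the uniformly bounded quantity from Step~1 by $\|x^{(k)}\|$, which tends to infinity, produces
\begin{equation*}
A y^{(k)} + B\,\frac{\Phi(\mu^{(k)},x^{(k)})}{\|x^{(k)}\|} - \frac{b}{\|x^{(k)}\|}\longrightarrow 0.
\end{equation*}

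\textbf{Step 3 (limit of the smoothed absolute value).} Using $\mu^{(k)}\le \mu^{(0)}$ (hence $\mu^{(k)}/\|x^{(k)}\|\to 0$) and the explicit form of $\phi$,
\begin{equation*}
\frac{\Phi(\mu^{(k)},x^{(k)})_i}{\|x^{(k)}\|} = \sqrt{\bigl(\mu^{(k)}/\|x^{(k)}\|\bigr)^2 + (y^{(k)}_i)^2} - \frac{\mu^{(k)}}{\|x^{(k)}\|}\ \longrightarrow\ |y^*_i|.
\end{equation*}
Passing to the limit in the displayed equation of Step~2 gives $A y^* + B|y^*|=0$. Writing $|y^*_i| = d_i y^*_i$, where $d_i := \sgn(y^*_i)$ if $y^*_i\neq 0$ and $d_i:=0$ otherwise, we get $d_i\in[-1,1]$ and
\begin{equation*}
(A + B D)\,y^* = 0, \qquad D = \diag(d_i),\ d_i\in[-1,1].
\end{equation*}
By Lemma~\ref{lem:ns}, Assumption~\ref{assu:as} forces $A+BD$ to be nonsingular, so $y^*=0$, contradicting $\|y^*\|=1$. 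Hence $\{x^{(k)}\}$ is bounded, and together with Step~1 we conclude that $\{z^{(k)}\}$ is bounded.

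\textbf{Expected main obstacle.} The subtle point is the asymptotic identification of $\Phi(\mu^{(k)},x^{(k)})/\|x^{(k)}\|$ with $|y^*|$: we must simultaneously control the effect of $\mu^{(k)}$ staying bounded away from $0$ only along subsequences and handle indices $i$ with $y^*_i=0$, where the diagonal entry $d_i$ must be chosen in $[-1,1]$ so that the identity $|y^*_i|=d_i y^*_i$ holds trivially. Once this is done, the rest is a direct appeal to Lemma~\ref{lem:ns}.
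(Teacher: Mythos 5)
Your proof is correct and follows essentially the same route as the paper: a normalization-and-contradiction argument showing that an unbounded iterate sequence would yield a unit vector $y^*$ with $(A+BD)y^*=0$ for some $D=\diag(d_i)$, $d_i\in[-1,1]$, contradicting Lemma~\ref{lem:ns}. The only cosmetic difference is that the paper first proves boundedness of the level sets $\{z:\|H(z)\|\le\Lambda\}$ and then applies this with $\Lambda=\sqrt{\mathcal{C}^{(0)}}$, whereas you run the same argument directly on the iterates using $\mathcal{M}(z^{(k)})\le\mathcal{C}^{(k)}\le\mathcal{C}^{(0)}$.
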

\begin{proof}
We first prove that the level set
$$
\mathcal{L}(\Lambda) := \{z = (\mu,x)\in \mathbb{R}^{n+1}: \|H(z)\|\le \Lambda\}
$$
is bounded for any $\Lambda >0$. On the contrary, there exists a sequence $\{\bar{z}^{(k)} = (\bar{\mu}^{(k)},\bar{x}^{(k)})\}$ such that $\lim\limits_{k\rightarrow \infty}\|\bar{z}^{(k)}\| = \infty$ and $\|H(\bar{z}^{(k)})\|\le \Lambda$, where $\Lambda >0$ is some constant. Since
\begin{equation}\label{ie:sh}
\|H(\bar{z}^{(k)})\|^2 = (\bar{\mu}^{(k)})^2 + \|A\bar{x}^{(k)} + B\Phi(\bar{\mu}^{(k)},\bar{x}^{(k)}) - b\|^2,
\end{equation}
we can conclude that $\{\bar{\mu}^{(k)}\}$ is bounded. It follows from this and the unboundedness of $\{(\bar{\mu}^{(k)},\bar{x}^{(k)})\}$ that $\lim\limits_{k\rightarrow \infty}\|\bar{x}^{(k)}\| = \infty$. Since the sequence $\left\{\frac{\bar{x}^{(k)}}{\|\bar{x}^{(k)}\|} \right\}$ is bounded, it has at least one accumulation point $\hat{x}$. Then, there exists a subsequence $\{\bar{x}^{(k)}\}_{k\in K}$ such that $\lim\limits_{k\in K,k\rightarrow +\infty}\frac{\bar{x}^{(k)}}{\|\bar{x}^{(k)}\|} = \hat{x}$ with $K\subset \{0,1,2,\cdots\}$. It follows from the continuity of the $2$-norm that $\|\hat{x}\| = 1$. In the following, we remain $k\in K$. From \eqref{ie:sh}, we have
\begin{align}\nonumber
\frac{\Lambda^2}{\|\bar{x}^{(k)}\|^2} &\ge \frac{\|H(\bar{z}^{(k)})\|^2}{\|\bar{x}^{(k)}\|^2}\\\label{ie:up}
&= \frac{(\bar{\mu}^{(k)})^2}{\|\bar{x}^{(k)}\|^2} + \left\| A\frac{\bar{x}^{(k)}}{\|\bar{x}^{(k)}\|} + B \frac{\Phi(\bar{\mu}^{(k)},\bar{x}^{(k)})}{\|\bar{x}^{(k)}\|} - \frac{b}{\|\bar{x}^{(k)}\|}\right\|^2.
\end{align}
Since
$$\frac{\sqrt{(\bar{\mu}^{(k)})^2 + (\bar{x}^{(k)}_i)^2} - \bar{\mu}^{(k)}}{\|\bar{x}^{(k)}\|} = \sqrt{\left(\frac{\bar{\mu}^{(k)}}{\|\bar{x}^{(k)}\|}\right)^2 + \left(\frac{\bar{x}^{(k)}_i}{\|\bar{x}^{(k)}\|}\right)^2} - \frac{\bar{\mu}^{k}}{\|\bar{x}^{(k)}\|} (i=1,2,\cdots,n),$$
from the boundedness of $\{\bar{\mu}^{(k)}\}$, we have
$$
\lim_{k \rightarrow \infty} \frac{\sqrt{(\bar{\mu}^{(k)})^2 + (\bar{x}^{(k)}_i)^2} - \bar{\mu}^{(k)}}{\|\bar{x}^{(k)}\|} = \sqrt{\hat{x}_i^2} = |\hat{x}_i|.
$$
Hence, by letting $k \rightarrow \infty$ in \eqref{ie:up}, we have $A\hat{x} + B|\hat{x}| = 0$, i.e., $[A + BD(\hat{x})]\hat{x} = 0$. Since $D(\hat{x})\in [-I,I]$, it follows from Lemma~\ref{lem:nonsingular} that $A + BD(\hat{x})$ is nonsingular. Thus, we have $\hat{x} = 0$, which contradicts to the fact that $\|\hat{x}\| = 1$.

If $\{z^{(k)}\}$ is generated by Algorithm~\ref{alg:NSNA}, then $\|H(z^{(k)})\| \le \sqrt{\mathcal{C}^{(0)}}$ for all $k\ge 0$. Hence, $\{z^{(k)}\}$ is bounded based on the aforementioned disscussion.
\end{proof}

\begin{rem}
The proof of Lemma~\ref{lem:levelbounded} is inspired by that of \cite[Theorem~2.3]{tang2019}, which was considered in the case that the interval matrix $[A-|B|, A+|B|]$ is regular. In addition, similar to the proof of \cite[Lemma~4.1]{jiang2013}, the boundedness of $\{z^{(k)}\}$ can be derived under the assumption that $\sigma_{\min}(A)>\sigma_{\max}(B)$. Our result here seems more general than those in \cite[Lemma~4.1]{jiang2013}.
\end{rem}

Now we show the global convergence of Algorithm~\ref{alg:NSNA}.
\begin{thm}\label{thm:gc}
Assume that Assumption~\ref{assu:as} holds. Let $\{z^{(k)} =(\mu^{(k)},x^{(k)})\}$ be the iteration sequence generated by Algorithm~\ref{alg:NSNA}. Then any accumulation point $z^*$ of $\{z^{(k)}\}$ satisfies
$$
H(z^*) = 0,
$$
i.e., $z^* =(0,x^*)$ and $x^*$ is a solution of GAVE~\eqref{eq:gave}.
\end{thm}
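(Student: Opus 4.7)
The plan is a contradiction argument built on the observation that the reference sequence $\{\mathcal{C}^{(k)}\}$ must pin $\mathcal{M}(z^{(k)})$ down to its own limit. First, by Lemma~\ref{lem:cc} the sequences $\{\mathcal{C}^{(k)}\}$ and $\{\mu^{(k)}\}$ are strictly decreasing and nonnegative, so they converge to limits $\mathcal{C}^*\ge 0$ and $\mu^*\ge 0$. Solving the recurrence~\eqref{eq:ck} for $\mathcal{M}(z^{(k+1)})$ yields
\[
\mathcal{M}(z^{(k+1)}) \;=\; \frac{\mathcal{C}^{(k+1)}}{\mathcal{C}^{(k)} - \mathcal{C}^{(k+1)} + 1},
\]
and letting $k\to\infty$ gives $\mathcal{M}(z^{(k)})\to \mathcal{C}^*$. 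Because $\mu^{(k+1)}\ge \beta^{(k)}=\gamma\mathcal{C}^{(k)}$ (as derived inside the proof of Theorem~\ref{thm:wd}), we also obtain $\mu^*\ge \gamma\mathcal{C}^*$. By Lemma~\ref{lem:levelbounded}, $\{z^{(k)}\}$ has an accumulation point $z^*=(\mu^*,x^*)$, and continuity of $\mathcal{M}$ forces $\mathcal{M}(z^*)=\mathcal{C}^*$.

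Now assume for contradiction that $H(z^*)\neq 0$, i.e.\ $\mathcal{C}^*>0$. Then $\mu^*\ge \gamma\mathcal{C}^*>0$, so $H$ is continuously differentiable at $z^*$ and, by Lemma~\ref{thm:ns}, $H'(z^*)$ is nonsingular. Along a subsequence $z^{(k_j)}\to z^*$, continuity of $(H')^{-1}$ together with $\beta^{(k)}\to\gamma\mathcal{C}^*$ implies that the Newton directions $\Delta z^{(k_j)}$ converge to the unique $\Delta z^*$ solving $H'(z^*)\Delta z^* = -H(z^*) + \gamma\mathcal{C}^* e^{(1)}$. Mimicking the computation in~\eqref{eq:ms},
\[
\mathcal{M}'(z^*)\Delta z^* \;=\; -2\|H(z^*)\|^2 + 2\gamma\mathcal{C}^*\mu^* \;=\; 2\mathcal{C}^*(\gamma\mu^* - 1),
\]
which is strictly negative because $\mu^*\le \mu^{(0)}$ and the initialization enforces $\gamma\mu^{(0)}<1$.

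To close the argument I split on whether the Newton-type acceptance test~\eqref{ie:hh} in step~$2$ succeeds infinitely often. If it does, then for those indices $\|H(z^{(k+1)})\|\le \theta\|H(z^{(k)})\|$ with $\theta\in(0,1)$; since $\|H(z^{(k)})\|\to\sqrt{\mathcal{C}^*}$, passing to the limit forces $\sqrt{\mathcal{C}^*}\le \theta\sqrt{\mathcal{C}^*}$ and hence $\mathcal{C}^*=0$. Otherwise step~$3$ is used for all sufficiently large $k$, and \eqref{ie:nml} combined with $\mathcal{M}(z^{(k)})\to\mathcal{C}^*$ and $\mathcal{C}^{(k)}\to\mathcal{C}^*$ yields $\alpha^{(k)}\Delta z^{(k)}\to 0$. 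I would then verify $\Delta z^*\neq 0$: if $\Delta z^*=0$, the defining equation forces $H(z^*)=\gamma\mathcal{C}^* e^{(1)}$, so $\mu^*=\gamma\mathcal{C}^*$ and $\mathcal{M}(z^*)=(\mu^*)^2=\mathcal{C}^*$, giving $\mu^*=1/\gamma$, which contradicts $\mu^*\le \mu^{(0)}<1/\gamma$. Hence $\alpha^{(k_j)}\to 0$, so the maximality of $\alpha^{(k)}$ implies that $\alpha^{(k)}/\delta$ violates~\eqref{ie:nml}; dividing by $\alpha^{(k)}/\delta$ and taking limits along $k_j$ in the resulting inequality produces $\mathcal{M}'(z^*)\Delta z^*\ge 0$, contradicting the strict negativity established above. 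The main obstacle is this last case, particularly the algebraic check that $\Delta z^*\neq 0$, which is what lets the Armijo-style limiting argument be invoked; everything else is monotone-sequence bookkeeping together with standard continuity of the Newton map at a nonsingular point.
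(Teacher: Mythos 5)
Your proof is correct and follows essentially the same route as the paper: solve the recurrence \eqref{eq:ck} to get $\mathcal{M}(z^{(k)})\to\mathcal{C}^*$, assume $\mathcal{C}^*>0$ to obtain $\mu^*\ge\gamma\mathcal{C}^*>0$ and a nonsingular $H'(z^*)$, show the fast acceptance test \eqref{ie:hh} can succeed only finitely often, and contradict the Armijo-type limit $\mathcal{M}'(z^*)\Delta z^*\ge 0$ with the explicit computation $\mathcal{M}'(z^*)\Delta z^*=2\mathcal{C}^*(\gamma\mu^*-1)<0$. The only (harmless) deviation is in the subcase where $\alpha^{(k)}$ stays bounded away from zero: the paper simply observes that then $\Delta z^*=0$ and hence $\mathcal{M}'(z^*)\Delta z^*=0$, which already contradicts the strict negativity, whereas you rule out $\Delta z^*=0$ by the algebraic check $H(z^*)=\gamma\mathcal{C}^*e^{(1)}\Rightarrow\mu^*=1/\gamma>\mu^{(0)}$ and thereby force $\alpha^{(k_j)}\to 0$; both arguments are valid.
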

\begin{proof}
Lemma~\ref{lem:levelbounded} implies the existence of the accumulation point of $\{z^{(k)}\}$ generated by Algorithm~\ref{alg:NSNA}. Let $z^*$ be any accumulation point of $\{z^{(k)}\}$, then there exists a subsequence of $\{z^{(k)}\}$ converging to $z^*$. For convenience, we still denote the subsequence by $\{z^{(k)}\}$.

By Lemma~\ref{lem:cc}, $\{\mathcal{C}^{(k)}\}$ is convergent because it is monotonically decreasing. Thus, there exists a constant $\mathcal{C}^*\ge 0$ such that $ \lim\limits_{k\rightarrow +\infty}\mathcal{C}^{(k)} = \mathcal{C}^*$. As $\mathcal{M}(z^{(k)}) = \|H(z^{(k)})\|^2 \le \mathcal{C}^{(k)}$ for all $k\ge 0$, $\lim\limits_{k\rightarrow +\infty} \|H(z^{(k)})\| =0$ provided that  $\mathcal{C}^*=0$. Then, from the continuity of $H(z)$ we have $H(z^*) = 0$. In the following, we assume that $\mathcal{C}^*>0$ and derive a contradiction.

According to the first equation in \eqref{eq:ck}, we have
\begin{equation}\label{eq:lm}
\lim_{k\rightarrow +\infty}\mathcal{M}(z^{(k+1)}) = \lim_{k\rightarrow +\infty}\left(\frac{\mathcal{C}^{(k+1)}}{1+\mathcal{C}^{(k)} -\mathcal{C}^{(k+1)}}\right)= \mathcal{C}^*>0.
\end{equation}
By the fact that $\beta^{(k)} = \gamma \mathcal{C}^{(k)}$, we have $\beta^* = \lim\limits_{k\rightarrow +\infty}\beta^{(k)} = \gamma \mathcal{C}^* >0$. Based on Theorem~\ref{thm:wd} and Lemma~\ref{lem:cc}, we have $\mu^* = \lim\limits_{k\rightarrow +\infty}\mu^{(k)} \ge \lim\limits_{k\rightarrow +\infty}\beta^{(k)} = \beta^* >0$. Since $\mu^*>0$, $H'(z^*)$ is nonsingular and $\mathcal{M}$ is continuously differentiable at $z^*$.

Let $\mathcal{N}:= \{k:\|H(z^{(k)} + \Delta z^{(k)})\|\le \theta \|H(z^{(k)})\|\}$. We claim that $\mathcal{N}$ must be a finite set. In fact, if $\mathcal{N}$ is an infinite set, then $\|H(z^{(k)} + \Delta z^{(k)})\|\le \theta \|H(z^{(k)})\|$, i.e., $\mathcal{M}(z^{(k+1)})\le \theta^2 \mathcal{M}(z^{(k)})$ holds for infinitely many $k$. By letting $k\rightarrow +\infty$ with $k\in \mathcal{N}$, we have $\mathcal{C}^* \le \theta^2 \mathcal{C}^*$. This leads to a contradiction due to $\theta\in (0,1)$ and $\mathcal{C}^*>0$. Hence, we can suppose that there exists an index $\bar{k}>0$ such that $\|H(z^{(k)} + \Delta z^{(k)})\|> \theta \|H(z^{(k)})\|$ for all $k\ge \bar{k}$. Then, for all $k\ge \bar{k}$, $z^{(k+1)} = z^{(k)} + \alpha^{(k)} \Delta z^{(k)}$ (generated by step~$3$) satisfies $\mathcal{M}(z^{(k+1)}) \le \mathcal{C}^{(k)} - \gamma \|\alpha^{(k)} \Delta z^{(k)}\|^2$, i.e.,
$$
\gamma \|\alpha^{(k)} \Delta z^{(k)}\|^2 \le \mathcal{C}^{(k)} - \mathcal{M}(z^{(k+1)}),
$$
from which and \eqref{eq:lm} we have $\lim\limits_{k\rightarrow +\infty} \alpha^{(k)}\|\Delta z^{(k)}\| = 0$.

On one hand, if $1\ge \alpha^{(k)} = \delta^{l_k} \ge \varrho >0$ for all $k\ge \bar{k}$ with $\varrho$ being a fixed constant, then $\Delta z^*= \lim\limits_{\bar{k}\le k\rightarrow +\infty} \Delta z^{(k)} = 0$, which implies that
\begin{equation}\label{ie:me}
\mathcal{M}'(z^*)\Delta z^* = 0.
\end{equation}
Here and in the sequel, $\Delta z^*$ is the unique solution of $H'(z^*)\Delta z^* = -H(z^*) + \beta^*e^{(1)}$.

On the other hand, $\{\alpha^{(k)}\}_{k\ge \bar{k}}$ has a subsequence converging to $0$. Without loss of generality, we may assume that $\lim\limits_{\bar{k}\le k\rightarrow +\infty} \alpha^{(k)} =0$. Let $\hat{\alpha}^{(k)} := \delta^{l_k}/\delta$, then $\lim\limits_{\bar{k}\le k\rightarrow +\infty} \hat{\alpha}^{(k)} =0$. Moreover, for all $k\ge \bar{k}$, it follows from the definition of $\alpha^{(k)}$ and Theorem~\ref{thm:wd} that
$$
\mathcal{M}(z^{(k)} + \hat{\alpha}^{(k)} \Delta z^{(k)}) > \mathcal{C}^{(k)} - \gamma \|\hat{\alpha}^{(k)}\Delta z^{(k)}\|^2 \ge \mathcal{M}(z^{(k)}) - \gamma \|\hat{\alpha}^{(k)} \Delta z^{(k)}\|^2.
$$
Thus,
$$
\frac{\mathcal{M}(z^{(k)} + \hat{\alpha}^{(k)} \Delta z^{(k)}) - \mathcal{M}(z^{(k)})}{\hat{\alpha}^{(k)}} + \gamma \hat{\alpha}^{(k)} \|\Delta z^{(k)}\|^2>0.
$$
By letting $k\rightarrow +\infty$ in the above inequality, we have
\begin{equation}\label{ie:mg}
\mathcal{M}'(z^*)\Delta z^* \ge 0.
\end{equation}

Since $\mu^*\gamma\le \gamma \mu^{(0)}<1$, it follows from \eqref{eq:ns} and \eqref{eq:lm}  that
\begin{equation*}%\label{ie:ml}
\frac{1}{2} \mathcal{M}'(z^*)\Delta z^* = H(z^*)^\top H'(z^*)\Delta z^* = - \mathcal{M}(z^*) + \mu^*\beta^* = -\mathcal{C}^* + \mu^*\gamma \mathcal{C}^* = (\mu^*\gamma- 1)\mathcal{C}^*<0,
\end{equation*}
which is contrary to \eqref{ie:me} and \eqref{ie:mg}. The proof of the theorem is now complete.
\end{proof}

Under Assumption~\ref{assu:as}, GAVE~\eqref{eq:gave} has a unique solution and thus Lemma~\ref{lem:levelbounded} and Theorem~\ref{thm:gc} imply that the sequence generated by Algorithm~\ref{alg:NSNA} has a unique accumulation $z^*$ and $\lim\limits_{k\rightarrow +\infty}z^{(k)} = z^*$. In the following, we will discuss the local quadratic convergence of Algorithm~\ref{alg:NSNA}.

\begin{lem}\label{lem:nonsingular}
Assume that Assumption~\ref{assu:as} holds and $z^* = (0,x^*)$ is the accumulation point of the sequence $\{z^{(k)}\}$ generated by Algorithm~\ref{alg:NSNA}. We have:
\begin{itemize}
  \item [(i)] $
  \partial H(z^*) \subseteq \left\{V: V = \left[\begin{array}{cc} 1 &0 \\ B\vecv(\kappa_i-1) &  A + B \diag(\chi_i)
  \end{array}\right],\, \kappa_i, \chi_i \in [-1,1]\,(i = 1,2,\cdots,n)\right\};
  $

  \item [(ii)] All $V\in \partial H(z^*)$ are nonsingular;

  \item [(iii)] There exists a neighborhood $\mathscr{N}(z^*)$ of $z^*$ and a constant $C>0$ such that for any $z:=(\mu,x)\in \mathscr{N}(z^*)$ with $\mu >0$, $H'(z)$ is nonsingular and $\|H'(z)^{-1}\|\le C$.
\end{itemize}
\end{lem}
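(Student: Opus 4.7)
The plan is to handle the three items in sequence, with the explicit block form of $H'$ from Proposition~\ref{prop:prop2}(i) and the nonsingularity result Lemma~\ref{lem:ns} doing most of the work.

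For part (i), I would start from the characterization of the differentiable set of $H$: $H$ fails to be differentiable at $z=(\mu,x)$ exactly when $(\mu,x_i)=(0,0)$ for some $i$, and at every differentiable point $H'(z)$ has the block form of Proposition~\ref{prop:prop2}(i) with $\kappa_i = \mu/\sqrt{\mu^2+x_i^2}\in[-1,1]$ and $\chi_i = x_i/\sqrt{\mu^2+x_i^2}\in[-1,1]$. To describe $\partial H(z^*)$, I pick sequences $z^{(k)}\to z^*$ of differentiable points and compute the limits of $H'(z^{(k)})$. For indices with $x_i^*\neq 0$ the limits are forced to $\kappa_i=0$, $\chi_i=\sgn(x_i^*)$, while for indices with $x_i^*=0$ every point of the unit circle in the $(\kappa_i,\chi_i)$-plane is realizable along an appropriate ray. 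Taking the convex hull (Clarke's definition) preserves the constant blocks $1,0,A,B$ and keeps each $\kappa_i,\chi_i$ inside $[-1,1]$, giving the claimed inclusion.

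For part (ii), the block-lower-triangular structure of any $V$ in the displayed set gives $\det V = 1\cdot\det(A+B\diag(\chi_i))$. Since $\chi_i\in[-1,1]$, Assumption~\ref{assu:as} combined with Lemma~\ref{lem:ns} forces this determinant to be nonzero, so every such $V$ is invertible.

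For part (iii), I would argue by contradiction using compactness. If no neighborhood $\mathscr{N}(z^*)$ and bound $C$ exist, one can extract a sequence $z^{(k)}=(\mu^{(k)},x^{(k)})\to z^*$ with $\mu^{(k)}>0$ (so $H$ is differentiable at $z^{(k)}$) and $\|H'(z^{(k)})^{-1}\|\to\infty$. The corresponding $\kappa_i^{(k)},\chi_i^{(k)}$ lie in the compact interval $[-1,1]$, so after passing to a subsequence they converge to some $\kappa_i^*,\chi_i^*\in[-1,1]$, and hence $H'(z^{(k)})$ converges to a matrix $V^*$ of exactly the form in (i). By (ii), $V^*$ is nonsingular, so $H'(z^{(k)})^{-1}\to (V^*)^{-1}$ is bounded, contradicting the blow-up.

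The only genuinely delicate step is part (i): I need to be careful that the convex-hull step in Clarke's definition does not create entries outside $[-1,1]$ or destroy the constant blocks, and that the characterization is stated as an inclusion (not an equality), since the unit-circle coupling $\kappa_i^2+\chi_i^2=1$ at individual limit points is lost after convex combination but only weakens the description, which is exactly what the lemma claims. Parts (ii) and (iii) are then essentially bookkeeping on top of Lemma~\ref{lem:ns} and a standard compactness argument.
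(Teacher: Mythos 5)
Your argument is correct and follows essentially the same route as the paper: part (i) by direct computation of the limiting Jacobians and the convex hull, part (ii) via the block-triangular determinant and Lemma~\ref{lem:ns}, and part (iii) by the standard perturbation/compactness argument (which the paper simply cites as \cite[Lemma~2.6]{qi1993} rather than reproving). The details you flag as delicate — that convex combinations preserve the constant blocks and keep $\kappa_i,\chi_i\in[-1,1]$, and that only the inclusion (not equality) is claimed — are handled correctly.
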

\begin{proof}
A direct computation yields the result (i). The result (ii) follows from (i) and Lemma~\ref{lem:ns}, and the result (iii) follows from \cite[Lemma~2.6]{qi1993}.
\end{proof}

Owing to Proposition~\ref{prop:prop2}~(ii) and Lemma~\ref{lem:nonsingular}, we can obtain the following local quadratic convergence theorem of Algorithm~\ref{alg:NSNA}. The theorem was well known in the application of smoothing-type Newton methods. The theorem as a whole can be implied by \cite[Theorem~8]{tang2021} and thus we omit the proof here.

\begin{thm}
Assume that Assumption~\ref{assu:as} holds and $z^*$ is the accumulation point of the sequence $\{z^{(k)}\}$ generated by Algorithm~\ref{alg:NSNA}. Then the whole sequence $\{z^{(k)}\}$ converges to $z^*$ with
$$
\|z^{(k+1)}-z^*\| = O(\|z^{(k)} -z^*\|^2).
$$
\end{thm}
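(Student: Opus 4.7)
The plan is to run the standard local-convergence argument for smoothing Newton methods on the perturbed Newton system~\eqref{eq:ns}. By Assumption~\ref{assu:as} and Theorem~\ref{thm:unique} the GAVE has a unique solution, so the bounded sequence $\{z^{(k)}\}$ (Lemma~\ref{lem:levelbounded}) combined with Theorem~\ref{thm:gc} has exactly one accumulation point and therefore converges as a whole to $z^* = (0,x^*)$. Since Theorem~\ref{thm:wd} guarantees $\mu^{(k)} > 0$ for every $k$, $H$ is continuously differentiable at each iterate, and Lemma~\ref{lem:nonsingular}(iii) supplies a constant $C > 0$ with $\|H'(z^{(k)})^{-1}\| \le C$ for all $k$ sufficiently large.

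The quadratic estimate will come from the identity
\begin{equation*}
z^{(k+1)} - z^* \;=\; H'(z^{(k)})^{-1}\bigl[H'(z^{(k)})(z^{(k)} - z^*) - H(z^{(k)}) + \beta^{(k)} e^{(1)}\bigr],
\end{equation*}
obtained from \eqref{eq:ns} and $H(z^*)=0$. I would bound each term in the bracket quadratically. First, strong semismoothness of $H$ (Proposition~\ref{prop:prop2}(ii)) yields the standard estimate $\|H(z^{(k)}) - H'(z^{(k)})(z^{(k)} - z^*)\| = O(\|z^{(k)} - z^*\|^2)$. Second, the update~\eqref{eq:ck} together with the monotonicity $\mathcal{C}^{(k)} \le \mathcal{C}^{(0)}$ from Lemma~\ref{lem:cc} yields $\mathcal{C}^{(k+1)} \le (\mathcal{C}^{(0)} + 1)\mathcal{M}(z^{(k+1)})$, while the Lipschitz continuity of $H$ inherited from Proposition~\ref{prop:prop1}(iv) gives $\mathcal{M}(z^{(k)}) = O(\|z^{(k)} - z^*\|^2)$. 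Hence $\beta^{(k)} = \gamma \mathcal{C}^{(k)} = O(\|z^{(k)} - z^*\|^2)$ for $k \ge 1$, and combined with the semismooth estimate this produces $\|z^{(k+1)} - z^*\| = O(\|z^{(k)} - z^*\|^2)$ whenever $z^{(k+1)}$ is generated by the full Newton step.

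The step I expect to be the main obstacle is verifying that step~2 of Algorithm~\ref{alg:NSNA}, rather than the backtracking in step~3, is the branch actually executed once the iterate is close to $z^*$. To settle this I would derive a local lower bound of the form $\|H(z^{(k)})\| \ge (2C)^{-1}\|z^{(k)} - z^*\|$ for $k$ large: strong semismoothness gives $\|H(z^{(k)})\| \ge \|H'(z^{(k)})(z^{(k)} - z^*)\| - O(\|z^{(k)} - z^*\|^2)$, and the first term is at least $C^{-1}\|z^{(k)}-z^*\|$, so the quadratic remainder is eventually absorbed. Combined with the upper bound $\|H(z^{(k+1)})\| = O(\|z^{(k+1)} - z^*\|) = O(\|z^{(k)} - z^*\|^2)$ obtained from Lipschitz continuity together with the preliminary error recursion, this forces $\|H(z^{(k+1)})\| \le \theta\|H(z^{(k)})\|$ for $k$ large. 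Hence the acceptance test~\eqref{ie:hh} is satisfied, step~2 is taken, and the recursion above is indeed what the algorithm realizes, yielding $\|z^{(k+1)} - z^*\| = O(\|z^{(k)} - z^*\|^2)$ for all sufficiently large $k$.
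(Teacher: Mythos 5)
Your proposal is correct, and it is precisely the standard local-convergence argument for smoothing-type Newton methods that the paper itself invokes: the paper omits the proof entirely, remarking only that the result ``can be implied by [Theorem~8, tang2021]'', and the chain you give (whole-sequence convergence from uniqueness, the error identity from \eqref{eq:ns}, the strong-semismoothness remainder bound, the estimate $\beta^{(k)}=\gamma\,\mathcal{C}^{(k)}=O(\mathcal{M}(z^{(k)}))=O(\|z^{(k)}-z^*\|^2)$ via \eqref{eq:ck}, and the lower bound $\|H(z^{(k)})\|\ge (2C)^{-1}\|z^{(k)}-z^*\|$ forcing acceptance of the full Newton step in \eqref{ie:hh}) is exactly what that citation encapsulates. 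In effect you have supplied the proof the authors chose to omit, and I see no gap in it.
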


\section{Numerical results}\label{sec:numer}
In this section, we will present two numerical examples to illustrate the performance of Algorithm~\ref{alg:NSNA}. Three algorithms will be tested, i.e., Algorithm~\ref{alg:NSNA} (denoted by ``NSNA''), the monotone smoothing Newton algorithm proposed by Jiang and Zhang \cite{jiang2013} (denoted by ``JZ-MSNA'') and the monotone smoothing Newton algorithm proposed by Tang and Zhou \cite{tang2019} (denoted by ``TZ-MSNA''). All experiments are implemented in MATLAB R2018b with a machine precision $2.22\times 10^{-16}$ on a PC Windows 10 operating system with an Intel i7-9700 CPU and 8GB RAM.

We will apply the aforementioned algorithms to solve GAVE~\eqref{eq:gave} arising from HLCP. Given $M,N\in \mathbb{R}^{n\times n}$ and $q\in \mathbb{R}^n$, HLCP is to find a pair $(z,w)\in \mathbb{R}^{n}\times \mathbb{R}^n$ such that
\begin{equation}\label{eq:hlcp}
Mz - Nw = q, \quad z\ge 0, \quad w\ge 0, \quad z^\top w = 0.
\end{equation}
The equivalent relationship between GAVE~\eqref{eq:gave} and HLCP~\eqref{eq:hlcp} can be found in \cite[Proposition~1]{mezzadri2020}.

\begin{exam}\label{exam:exam1}
Consider HLCP~~\eqref{eq:hlcp} with $M = \hat{A} + \xi I$ and $N = \hat{B} + \zeta I$ \cite{mezzadri2020m}, where
$$
\hat{A} = \left[\begin{array}{cccccc}
S & -I & & & & \\
-I & S & -I & & & \\
 & -I & S & -I &&\\
 && \ddots & \ddots &\ddots &\\
 &&& -I & S &-I\\
 &&&& -I & S
 \end{array}\right],\quad \hat{B} = \left[\begin{array}{cccccc}
S &  & & & & \\
 & S &  & & & \\
 &  & S &  &&\\
 && & \ddots & &\\
 &&&  & S &\\
 &&&&  & S
 \end{array}\right]
$$
and
$$
S = \left[\begin{array}{cccccc}
4 & -1 & & & & \\
-1 & 4 & -1 & & & \\
 & -1 & 4 & -1 &&\\
 && \ddots & \ddots &\ddots &\\
 &&& -1 & 4 &-1\\
 &&&& -1 & 4
 \end{array}\right].
$$
\end{exam}

\begin{exam}\label{exam:exam2}
Consider HLCP~~\eqref{eq:hlcp} with $M = \hat{A} + \xi I$ and $N = \hat{B} + \zeta I$ \cite{mezzadri2020m}, where
$$
\hat{A} = \left[\begin{array}{cccccc}
S & -0.5 I & & & & \\
-1.5I & S & -0.5I & & & \\
 & -1.5I & S & -0.5I &&\\
 && \ddots & \ddots &\ddots &\\
 &&& -1.5I & S &-0.5I\\
 &&&& -1.5I & S
 \end{array}\right],\quad \hat{B} = \left[\begin{array}{cccccc}
S &  & & & & \\
 & S &  & & & \\
 &  & S &  &&\\
 && & \ddots & &\\
 &&&  & S &\\
 &&&&  & S
 \end{array}\right]
$$
and
$$
S = \left[\begin{array}{cccccc}
4 & -0.5 & & & & \\
-1.5 & 4 & -0.5 & & & \\
 & -1.5 & 4 & -0.5 &&\\
 && \ddots & \ddots &\ddots &\\
 &&& -1.5 & 4 &-0.5\\
 &&&& -1.5 & 4
 \end{array}\right].
$$
\end{exam}

Obviously, when $\xi,\zeta\ge 0$, matrices $M$ and $N$ in Example~\ref{exam:exam1} are symmetric positive definite while the corresponding matrices in Example~\ref{exam:exam2} are nonsymmetric positive definite. Moreover, it is easy to verify that $\{M,N\}$ has the column $\mathcal{W}$-property \cite{mezzadri2020m}, and thus HLCP~\eqref{eq:hlcp} has a unique solution for any $q\in \mathbb{R}^n$ \cite[Theorem~2]{sznajder1995}. Correspondingly, GAVE~\eqref{eq:gave} with $A = M + N$ and $B = M-N$ satisfies Assumption~\ref{assu:as} and has a unique solution for any $b= q \in \mathbb{R}^n$.

In both examples, we define $q = Mz^* - Nw^*$ with
$$
z^* = [0,1,0,1,\cdots,0,1]^\top,\quad w^* = [1,0,1,0\cdots,1,0]^\top.
$$
In addition, three sets of values of $\xi$ and $\zeta$ are used, i.e., $(\xi,\zeta) =(0,0)$,  $(\xi,\zeta) = (0,4)$ and $(\xi,\zeta) = (4,0)$.

For NSNA, we set $\theta = 0.2, \delta = 0.8, \mu^{(0)} = 0.01$ and choose $\gamma := \min\{\frac{\mu^{(0)}}{\mathcal{C}^{(0)} + 1}, \frac{1}{\mu^{(0)} + 1}, 10^{-12}\}$ such that $\gamma \mathcal{C}^{(0)}<\mu^{(0)}$, $\gamma\mu^{(0)} <1$ and $\gamma\in (0,1)$. For JZ-MSNA, we set $\delta = 0.8, \sigma = 0.2, \mu^{(0)} = 0.01, p = 2$ and choose $\beta := \max\{100, 1.01*(\tau^{(0)})^2/\mu_0\}$ to satisfy the conditions needed for this algorithm \cite{jiang2013} (we refer to \cite{jiang2013} for the definition of $\tau^{(0)}$). For TZ-MSNA, as in \cite{tang2019}, we set $\sigma = 0.2, \delta = 0.8, \gamma = 0.001$ and $\mu^{(0)} = 0.01$. For all methods, $x^{(0)} = [2,2,\cdots,2]^\top$ and methods are stopped if $\Res = \|Ax^{(k)} + B|x^{(k)}| - b\|\le 10^{-7}$ or the maximum number of iteration step $it\_\max = 100$ is exceeded.

For Example~\ref{exam:exam1}, numerical results are shown in Tables~\ref{table1}-\ref{table3}, from which we can find that NSNA is better than JZ-MSNA and TZ-MSNA in terms of $\Iter$ (the number of iterations) and $\Cpu$ (the elapsed CPU time in seconds). Figure~\ref{Fig:Curves-for-P1} plots the convergence curves of the tested methods, from which the monotone convergence properties of all methods are shown\footnote{For JZ-MSNA and TZ-MSNA, $\|H(z^{(k)})\|$ is defined as in \eqref{eq:sf4ave} with $\phi(a,b) = (|a|^p + |b|^p)^{\frac{1}{p}}$.}. For Example~\ref{exam:exam2}, numerical results are shown in Tables~\ref{table4}-\ref{table6}, from which we can also find that NSNA is superior to JZ-MSNA and TZ-MSNA in terms of $\Iter$ and $\Cpu$. Figure~\ref{Fig:Curves-for-P2} plots the convergence curves of the tested methods, from which the monotone convergence properties of JZ-MSNA and TZ-MSNA are shown and the nonmonotone convergence property of NSNA occurs. In conclusion, under our setting, NSNA is a competitive method for solving GAVE~\eqref{eq:gave}.

%%%%%%%%%%%%%%  Tables %%%%%%%%%%%%%%%%%%%%%%%%%%%%
\setlength{\tabcolsep}{1.5pt}
\begin{table}[!h]\small
	\centering
	\caption{Numerical results for Example~\ref{exam:exam1} with $\xi = \zeta = 0$.}\label{table1}
	\begin{tabular}{|c|c|cccc|}\hline
		\multirow{2}{*}{Method}  &  & \multicolumn{4}{|c|}{$n$}  \\ \cline{3-6}
		&     &  $256$ & $1024$ & $2304$ & $4096$ \\\cline{2-6} \hline\hline
		\multirow{3}{*}{NSNA} & $\Iter$  & $\textbf{5}$ & $\textbf{5}$ & $\textbf{6}$ & $\textbf{6}$ \\
		& $\Cpu$ & $ \textbf{0.0044}$ & $\textbf{0.0619}$ & $\textbf{0.3364}$ & $\textbf{0.9985}$ \\
		& $\Res$ & $1.1392\times 10^{-14}$ & $5.7894\times 10^{-14}$  & $2.8847\times 10^{-13}$ &  $1.5603\times 10^{-12}$\\\hline
		\multirow{3}{*}{JZ-MSNA} & $\Iter$  & $6$ & $6$ & $8$ & $7$\\
		& $\Cpu$ & $0.0052$ & $0.0724$ & $0.4497$ & $1.2020$\\
		& $\Res$ &$2.3488\times 10^{-11}$ & $4.9355\times 10^{-10}$ & $1.4026\times 10^{-11}$ & $9.4826\times 10^{-11}$\\\hline
\multirow{3}{*}{TZ-MSNA} & $\Iter$  & $6$ & $6$ & $7$ & $7$\\
		& $\Cpu$ & $0.0049$ & $0.0664$ & $0.3860$ & $1.1631$\\
		& $\Res$ &$1.1147\times 10^{-14}$ & $2.2960\times 10^{-14}$ & $3.4131\times 10^{-14}$ & $4.6847\times 10^{-14}$\\\hline
	\end{tabular}
\end{table}

\setlength{\tabcolsep}{1.5pt}
\begin{table}[!h]\small
	\centering
	\caption{Numerical results for Example~\ref{exam:exam1} with $\xi =0$ and $\zeta = 4$.}\label{table2}
	\begin{tabular}{|c|c|cccc|}\hline
		\multirow{2}{*}{Method}  &  & \multicolumn{4}{|c|}{$n$}  \\ \cline{3-6}
		&     &  $256$ & $1024$ & $2304$ & $4096$ \\\cline{2-6} \hline\hline
		\multirow{3}{*}{NSNA} & $\Iter$  & $\textbf{5}$ & $\textbf{6}$ & $\textbf{7}$ & $\textbf{7}$ \\
		& $\Cpu$ & $ \textbf{0.0035}$ & $\textbf{0.0745}$ & $\textbf{0.3975}$ & $\textbf{1.3254}$ \\
		& $\Res$ & $2.9913\times 10^{-14}$ & $7.8455\times 10^{-13}$  & $4.8343\times 10^{-12}$ &  $2.3106\times 10^{-11}$\\\hline
		\multirow{3}{*}{JZ-MSNA} & $\Iter$  & $7$ & $8$ & $8$ & $9$\\
		& $\Cpu$ & $0.0055$ & $0.1102$ & $0.4676$ & $1.6890$\\
		& $\Res$ &$6.4061\times 10^{-11}$ & $1.7803\times 10^{-9}$ & $4.4746\times 10^{-8}$ & $7.1534\times 10^{-8}$\\\hline
\multirow{3}{*}{TZ-MSNA} & $\Iter$  & $6$ & $7$ & $8$ & $8$\\
		& $\Cpu$ & $0.0039$ & $0.0862$ & $0.4599$ & $1.5145$\\
		& $\Res$ &$1.7297\times 10^{-14}$ & $3.4071\times 10^{-14}$ & $5.3415\times 10^{-14}$ & $6.7748\times 10^{-14}$\\\hline
	\end{tabular}
\end{table}

\setlength{\tabcolsep}{1.5pt}
\begin{table}[!h]\small
	\centering
	\caption{Numerical results for Example~\ref{exam:exam1} with $\xi =4$ and $\zeta = 0$.}\label{table3}
	\begin{tabular}{|c|c|cccc|}\hline
		\multirow{2}{*}{Method}  &  & \multicolumn{4}{|c|}{$n$}  \\ \cline{3-6}
		&     &  $256$ & $1024$ & $2304$ & $4096$ \\\cline{2-6} \hline\hline
		\multirow{3}{*}{NSNA} &$\Iter$  & $\textbf{3}$ & $\textbf{3}$ & $\textbf{3}$ & $\textbf{3}$ \\
		& $\Cpu$ & $ \textbf{0.0021}$ & $\textbf{0.0361}$ & $\textbf{0.1536}$ & $\textbf{0.5444}$ \\
		&$\Res$ & $2.0696\times 10^{-13}$ & $5.8026\times 10^{-12}$  & $4.2044\times 10^{-11}$ &  $1.7292\times 10^{-10}$\\\hline
		\multirow{3}{*}{JZ-MSNA} &$\Iter$  & $4$ & $4$ & $5$ & $5$\\
		& $\Cpu$ & $0.0035$ & $0.0477$ & $0.2846$ & $0.9388$\\
		& $\Res$ &$2.5682\times 10^{-11}$ & $1.1439\times 10^{-11}$ & $2.1336\times 10^{-11}$ & $3.0094\times 10^{-11}$\\\hline
\multirow{3}{*}{TZ-MSNA} & $\Iter$  & $4$ & $4$ & $4$ & $4$\\
		& $\Cpu$ & $0.0024$ & $0.0458$ & $0.2293$ & $0.7017$\\
		& $\Res$ &$1.7786\times 10^{-14}$ & $3.4123\times 10^{-14}$ & $5.2927\times 10^{-14}$ & $6.8855\times 10^{-14}$\\\hline
	\end{tabular}
\end{table}

%%%%%%%%%%%%%%%%%%%%%%%%%%%%%%%%%%%%%%%%%%%%%%%%%%%%%%%%%

\setlength{\tabcolsep}{1.5pt}
\begin{table}[!h]\small
	\centering
	\caption{Numerical results for Example~\ref{exam:exam2} with $\xi = \zeta = 0$.}\label{table4}
	\begin{tabular}{|c|c|cccc|}\hline
		\multirow{2}{*}{Method}  &  & \multicolumn{4}{|c|}{$n$}  \\ \cline{3-6}
		&     &  $256$ & $1024$ & $2304$ & $4096$ \\\cline{2-6} \hline\hline
		\multirow{3}{*}{NSNA} & $\Iter$ & $\textbf{4}$ & $\textbf{5}$ & $\textbf{6}$ & $\textbf{6}$ \\
		& $\Cpu$ & $ \textbf{0.0031}$ & $\textbf{0.0642}$ & $\textbf{0.3332}$ & $\textbf{1.0440}$ \\
		& $\Res$ & $1.0500\times 10^{-14}$ & $5.9914\times 10^{-14}$  & $2.9611\times 10^{-13}$ &  $1.6331\times 10^{-12}$\\\hline
		\multirow{3}{*}{JZ-MSNA} & $\Iter$  & $5$ & $6$ & $8$ & $7$\\
		& $\Cpu$ & $0.0036$ & $0.0774$ & $0.4503$ & $1.2214$\\
		& $\Res$ &$4.1569\times 10^{-11}$ & $2.7899\times 10^{-10}$ & $1.4829\times 10^{-11}$ & $6.0501\times 10^{-9}$\\\hline
\multirow{3}{*}{TZ-MSNA} & $\Iter$  & $5$ & $6$ & $7$ & $7$\\
		& $\Cpu$ & $0.0035$ & $0.0679$ & $0.3776$ & $1.2130$\\
		& $\Res$ &$1.1200\times 10^{-14}$ & $2.0773\times 10^{-14}$ & $3.1480\times 10^{-14}$ & $4.2238\times 10^{-14}$\\\hline
	\end{tabular}
\end{table}

\setlength{\tabcolsep}{1.5pt}
\begin{table}[!h]\small
	\centering
	\caption{Numerical results for Example~\ref{exam:exam2} with $\xi =0$ and $\zeta = 4$.}\label{table5}
	\begin{tabular}{|c|c|cccc|}\hline
		\multirow{2}{*}{Method}  &  & \multicolumn{4}{|c|}{$n$}  \\ \cline{3-6}
		&     &  $256$ & $1024$ & $2304$ & $4096$ \\\cline{2-6} \hline\hline
		\multirow{3}{*}{NSNA} & $\Iter$  & $\textbf{6}$ & $\textbf{7}$ & $\textbf{7}$ & $\textbf{8}$ \\
		& $\Cpu$ & $ \textbf{0.0047}$ & $\textbf{0.0869}$ & $\textbf{0.4074}$ & $\textbf{1.4394}$ \\
		& $\Res$ & $3.0087\times 10^{-14}$ & $7.9704\times 10^{-13}$  & $5.8680\times 10^{-12}$ &  $2.4058\times 10^{-11}$\\\hline
		\multirow{3}{*}{JZ-MSNA} & $\Iter$  & $8$ & $9$ & $10$ & $11$\\
		& $\Cpu$ & $0.0075$ & $0.1225$ & $0.5793$ & $1.9709$\\
		& $\Res$ &$1.4866\times 10^{-11}$ & $2.0476\times 10^{-9}$ & $1.3748\times 10^{-8}$ & $7.0290\times 10^{-8}$\\\hline
\multirow{3}{*}{TZ-MSNA} & $\Iter$  & $7$ & $8$ & $9$ & $9$\\
		& $\Cpu$ & $0.0050$ & $0.0945$ & $0.5193$ & $1.6262$\\
		& $\Res$ &$1.6717\times 10^{-14}$ & $3.3155\times 10^{-14}$ & $5.1814\times 10^{-14}$ & $9.2842\times 10^{-14}$\\\hline
	\end{tabular}
\end{table}

\setlength{\tabcolsep}{1.5pt}
\begin{table}[!h]\small
	\centering
	\caption{Numerical results for Example~\ref{exam:exam2} with $\xi =4$ and $\zeta = 0$.}\label{table6}
	\begin{tabular}{|c|c|cccc|}\hline
		\multirow{2}{*}{Method}  &  & \multicolumn{4}{|c|}{$n$}  \\ \cline{3-6}
		&     &  $256$ & $1024$ & $2304$ & $4096$ \\\cline{2-6} \hline\hline
		\multirow{3}{*}{NSNA} & $\Iter$  & $\textbf{3}$ & $\textbf{3}$ & $\textbf{3}$ & $\textbf{3}$ \\
		& $\Cpu$ & $ \textbf{0.0024}$ & $\textbf{0.0388}$ & $\textbf{0.1623}$ & $\textbf{0.5437}$ \\
		& $\Res$ & $2.1175\times 10^{-13}$ & $5.8378\times 10^{-12}$  & $4.2243\times 10^{-11}$ &  $1.7356\times 10^{-10}$\\\hline
		\multirow{3}{*}{JZ-MSNA} & $\Iter$  & $4$ & $4$ & $5$ & $5$\\
		& $\Cpu$ & $0.0030$ & $0.0500$ & $0.2979$ & $0.8752$\\
		& $\Res$ &$2.5874\times 10^{-11}$ & $1.1674\times 10^{-11}$ & $2.1543\times 10^{-11}$ & $3.0534\times 10^{-11}$\\\hline
\multirow{3}{*}{TZ-MSNA} & $\Iter$  & $4$ & $4$ & $4$ & $4$\\
		&$\Cpu$ & $0.0029$ & $0.0493$ & $0.2448$ & $0.7257$\\
		& $\Res$ &$1.7719\times 10^{-14}$ & $3.3493\times 10^{-14}$ & $4.6185\times 10^{-14}$ & $6.7869\times 10^{-14}$\\\hline
	\end{tabular}
\end{table}

%%%%%%%%%%%%%%%%%%%%% figures %%%%%%%%%%%%%%%%%%%%%%%
\begin{figure}[t]
{\centering
\begin{tabular}{ccc}
\hspace{-0.3 cm}
\resizebox*{0.3\textwidth}{0.26\textheight}{\includegraphics{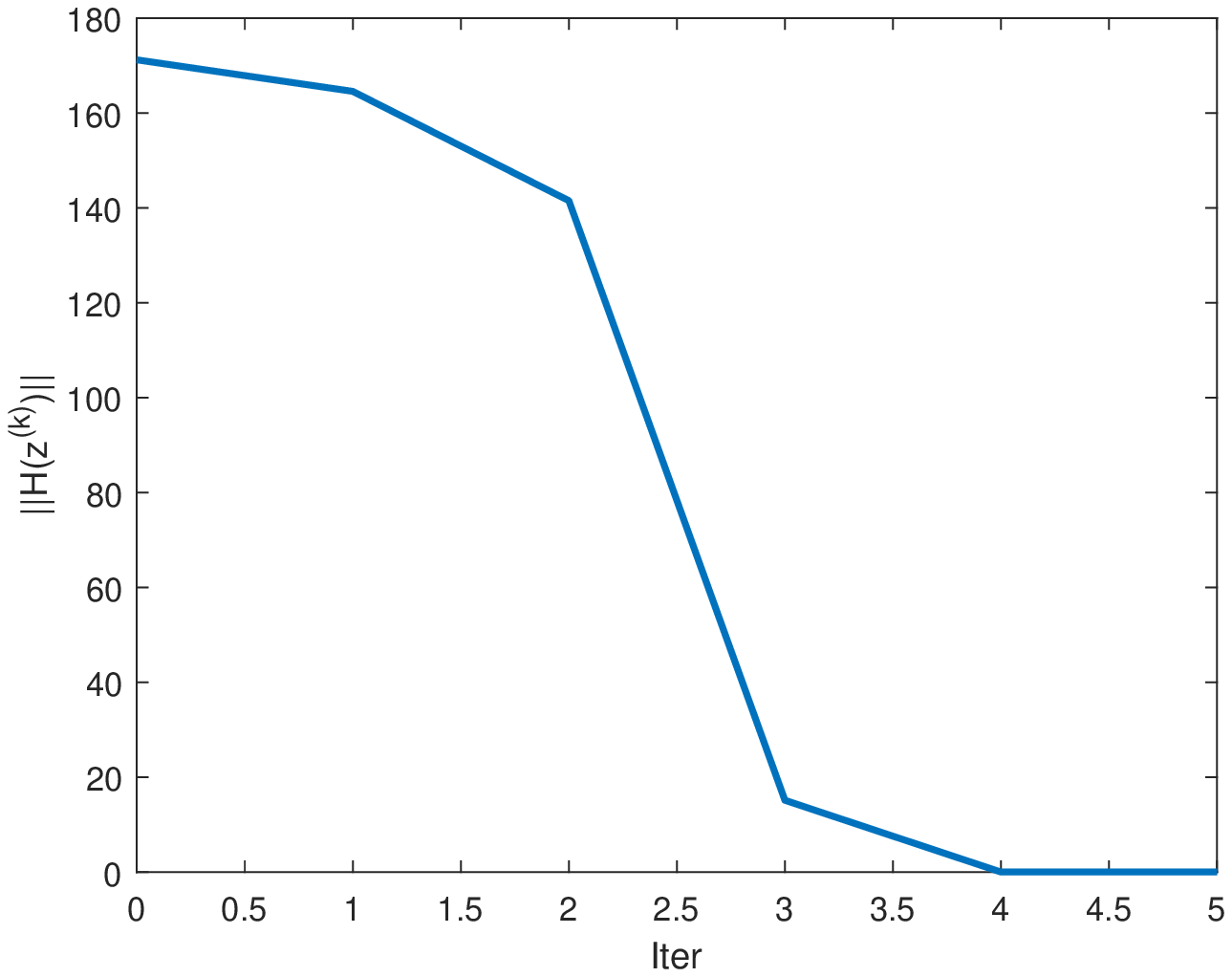}}
&\hspace{-0.4 cm}\resizebox*{0.4\textwidth}{0.26\textheight}{\includegraphics{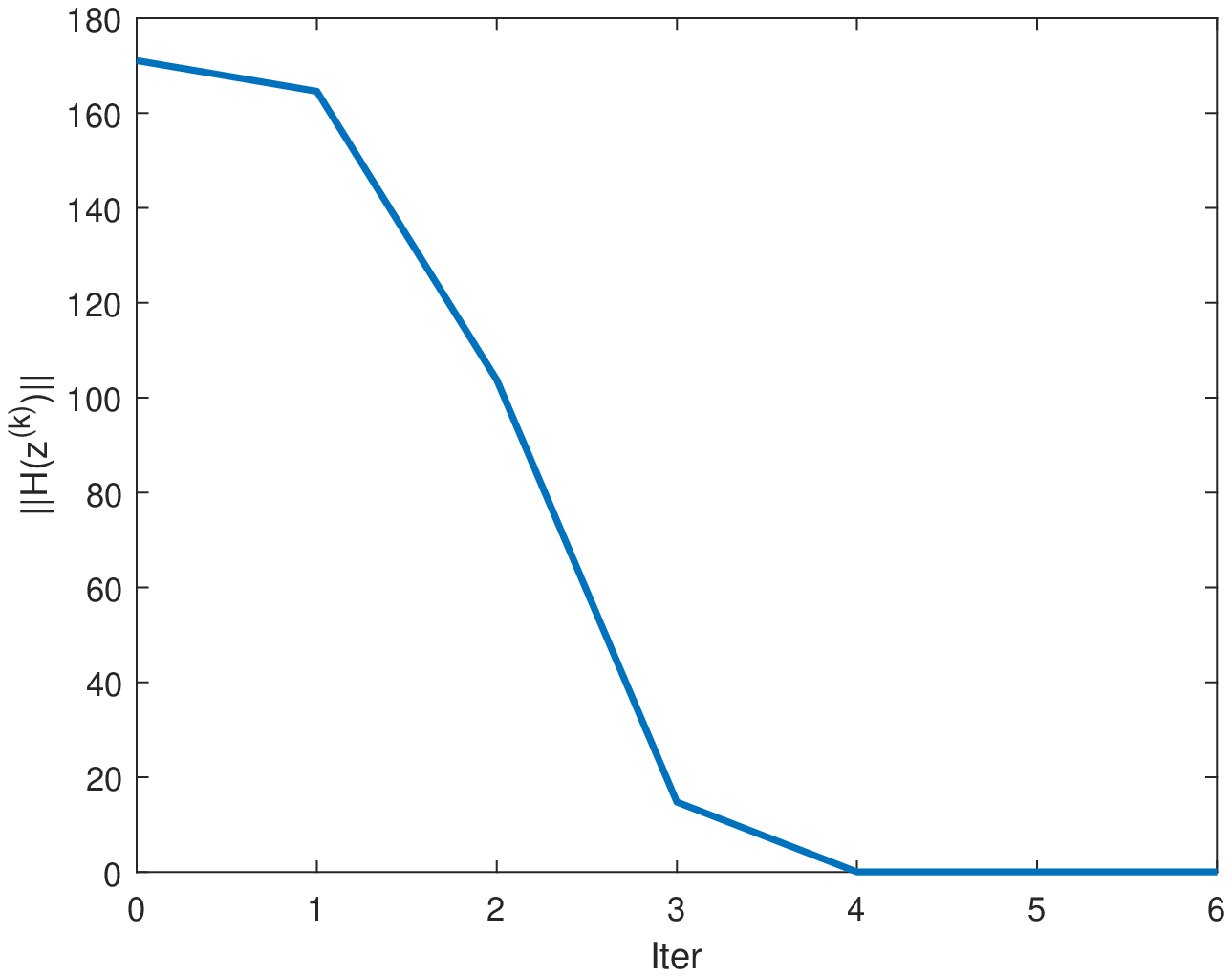}} & \hspace{-0.4 cm}
\resizebox*{0.3\textwidth}{0.26\textheight}{\includegraphics{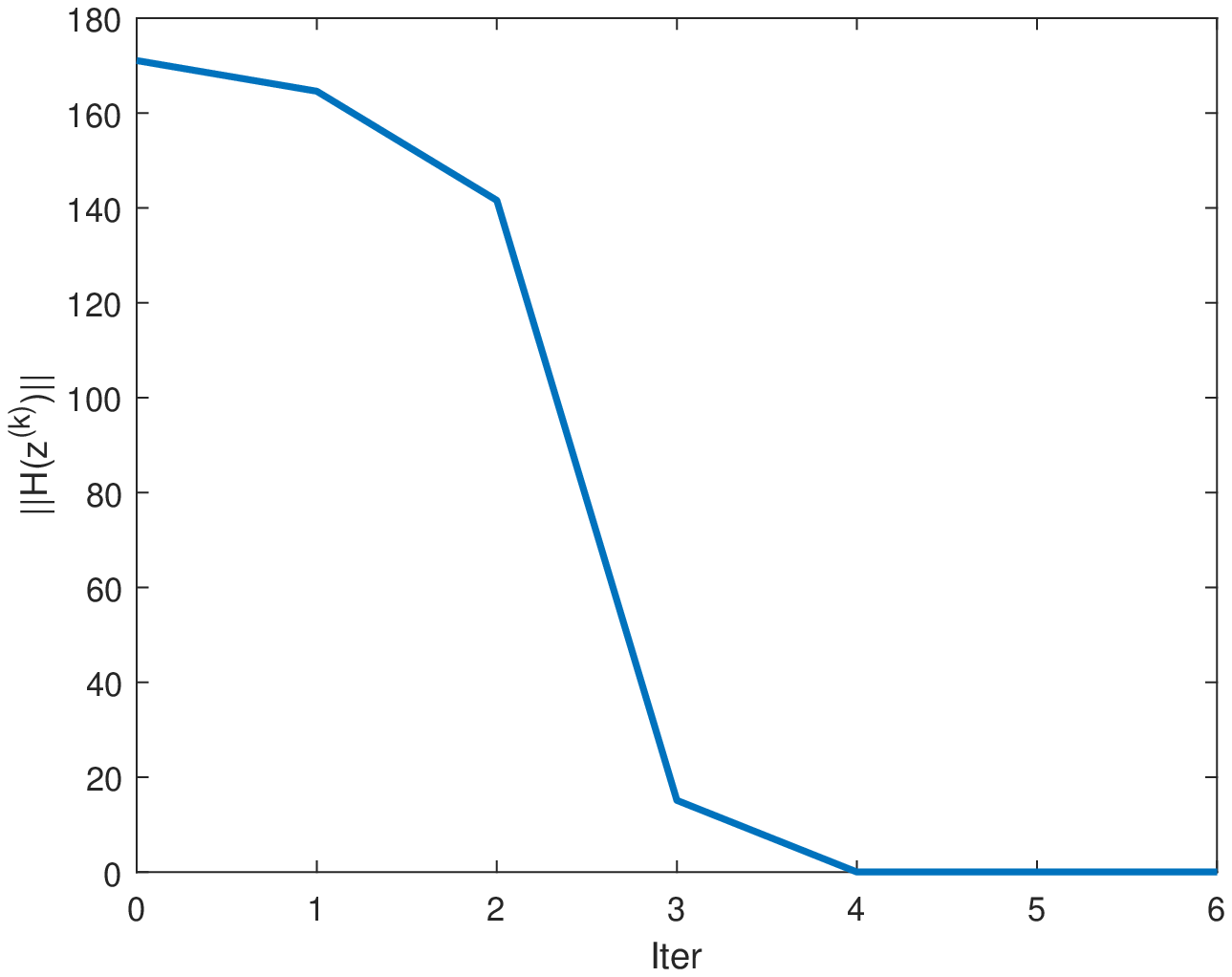}} \vspace{2ex}\\
\hspace{-0.3 cm}
\resizebox*{0.3\textwidth}{0.26\textheight}{\includegraphics{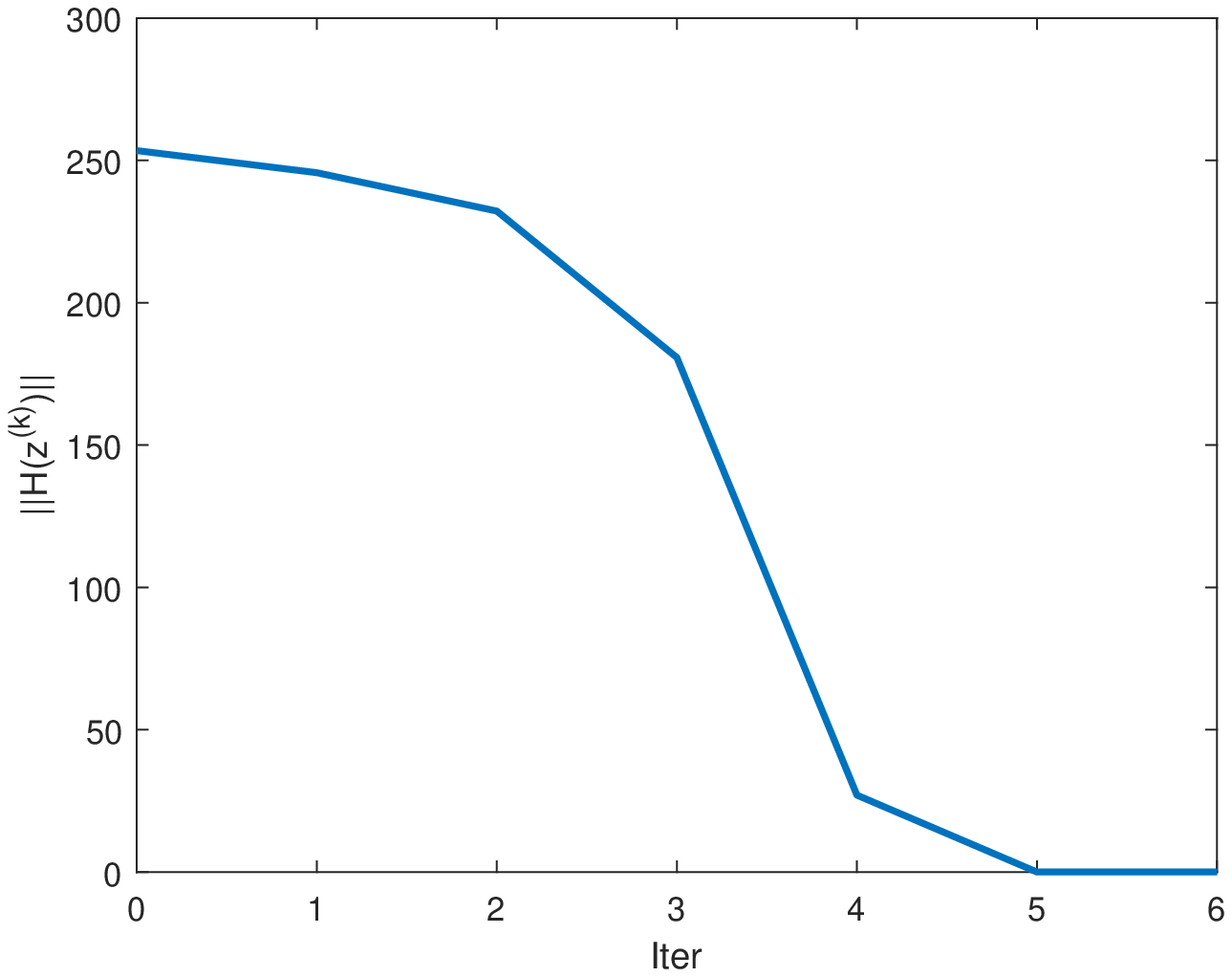}}
&\hspace{-0.4 cm}\resizebox*{0.4\textwidth}{0.26\textheight}{\includegraphics{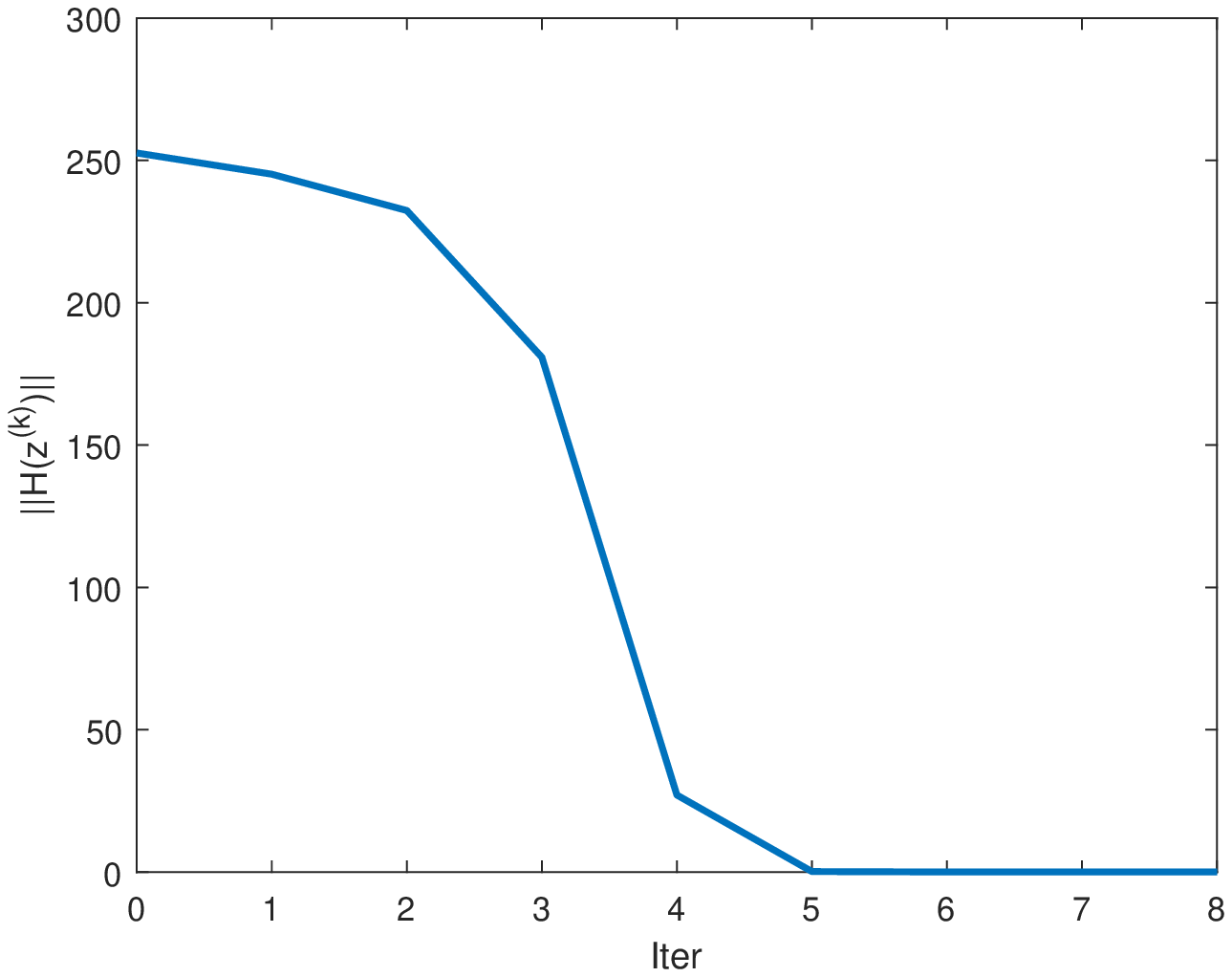}} & \hspace{-0.4 cm}
\resizebox*{0.3\textwidth}{0.26\textheight}{\includegraphics{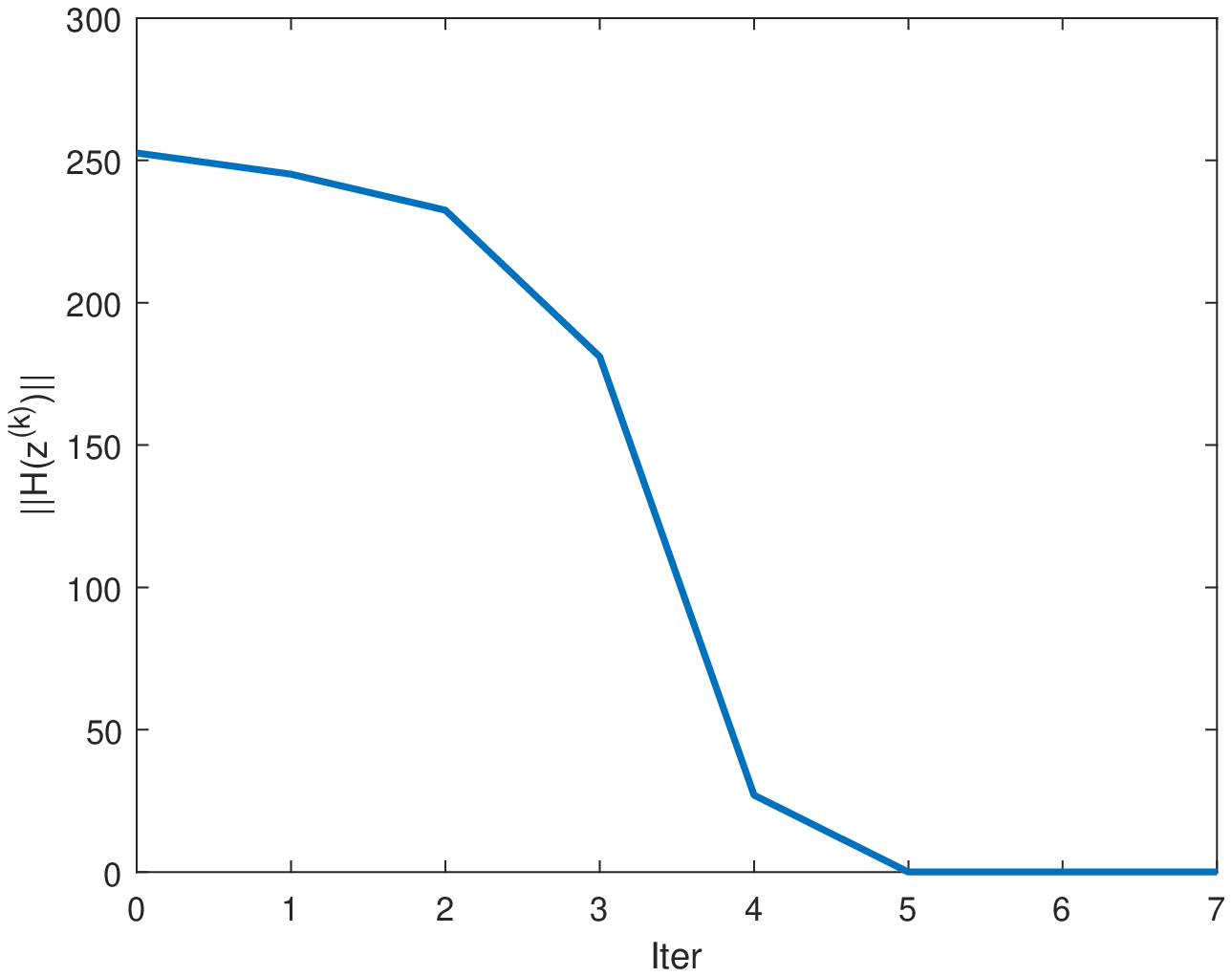}} \vspace{2ex}\\
\hspace{-0.3 cm}
\resizebox*{0.3\textwidth}{0.26\textheight}{\includegraphics{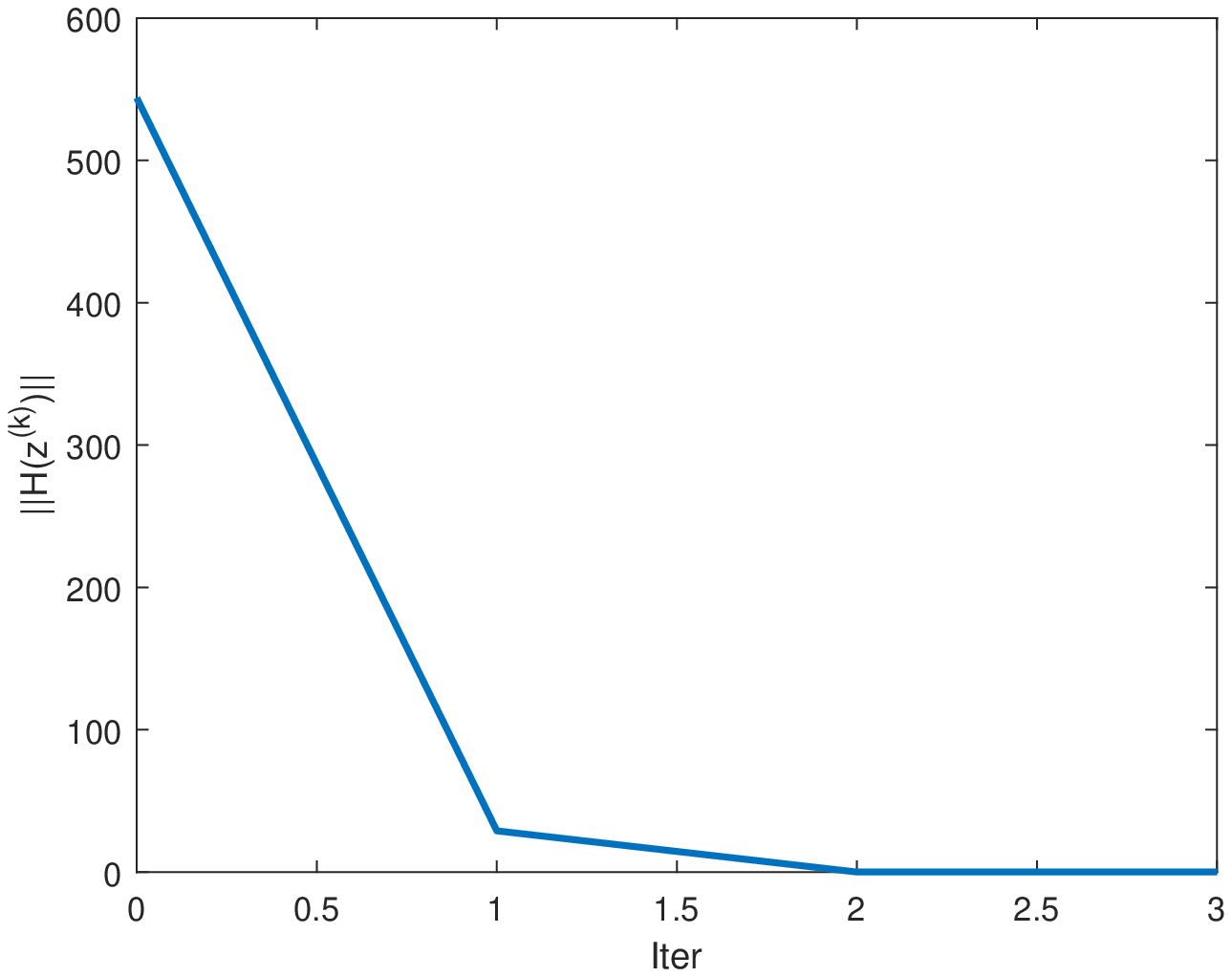}}
&\hspace{-0.4 cm}\resizebox*{0.4\textwidth}{0.26\textheight}{\includegraphics{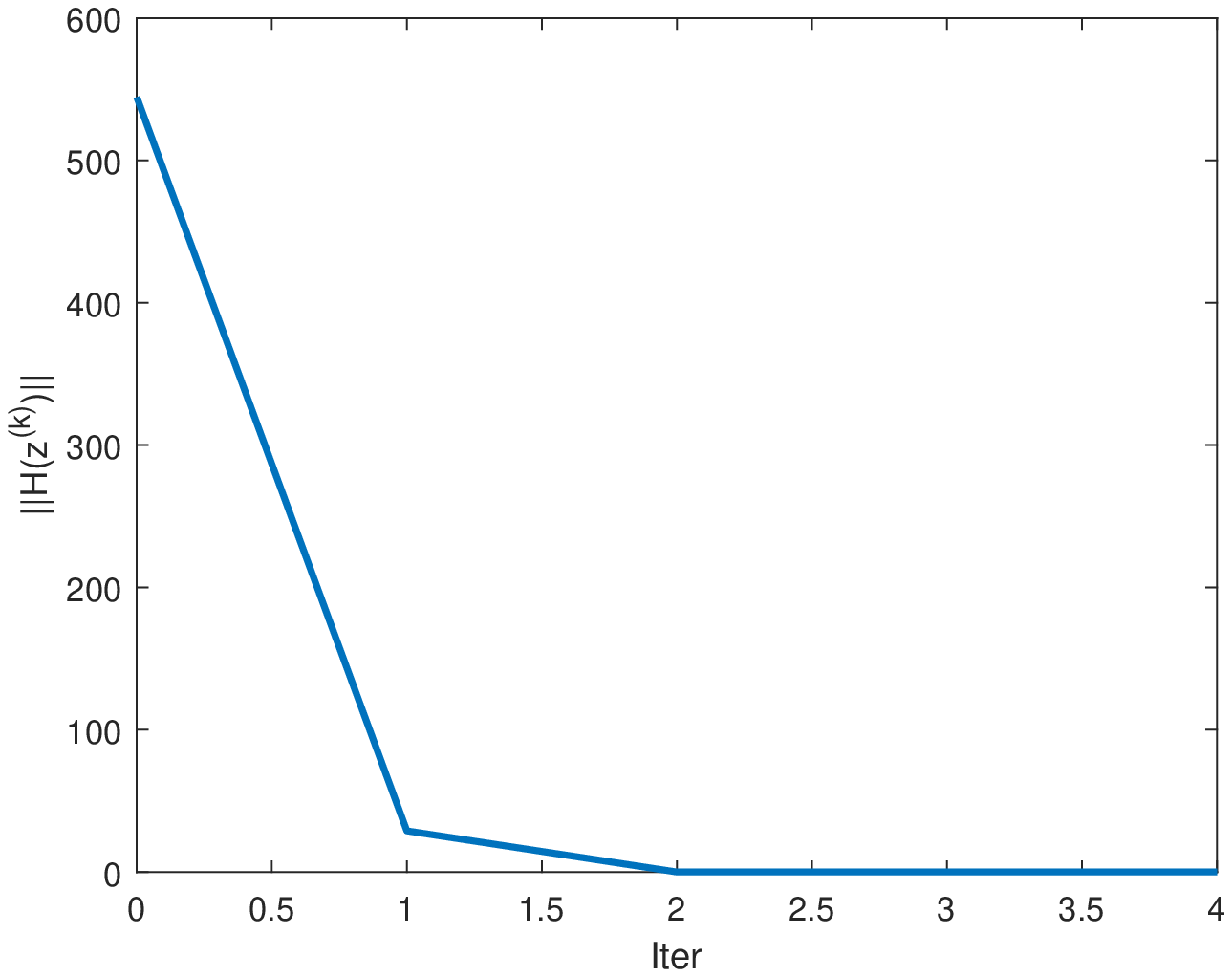}} & \hspace{-0.4 cm}
\resizebox*{0.3\textwidth}{0.26\textheight}{\includegraphics{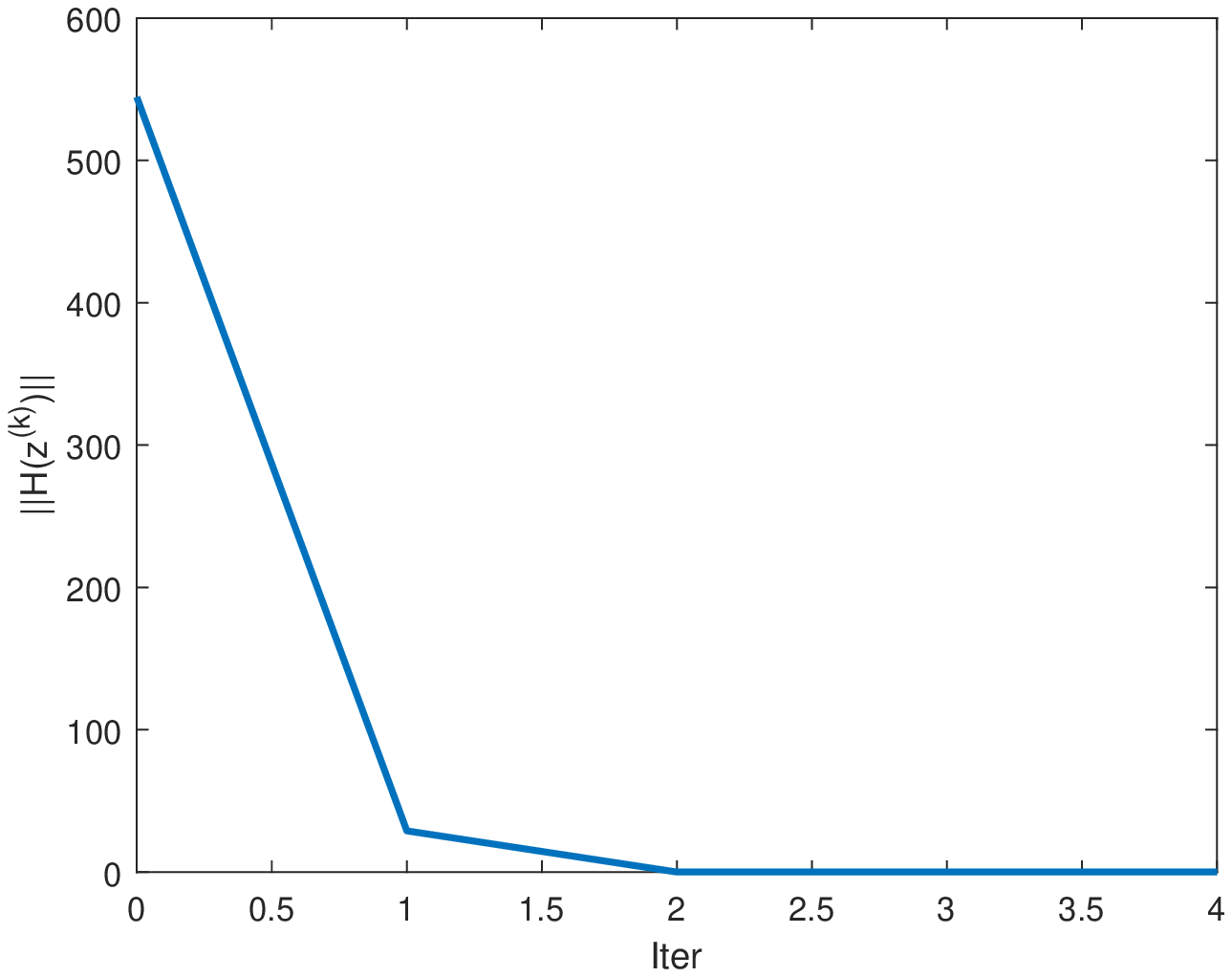}} \vspace{2ex}\\
\end{tabular}\par
}\vspace{-0.15 cm}
\caption{Convergence history curves for Example~\ref{exam:exam1} with $n = 32^2$. The plots in the first column are for NSNA, the plots in the second column are for JZ-MSNA and the plots in the third column are for TZ-MSNA, respectively. The plots in the first row are for $\xi = \zeta =0$, the plots in the second row are for $\xi = 0$ and $\zeta = 4$ and the plots in the third row are for $\xi = 4$ and $\zeta = 0$, respectively.
}
\label{Fig:Curves-for-P1}
\end{figure}

\begin{figure}[t]
{\centering
\begin{tabular}{ccc}
\hspace{-0.3 cm}
\resizebox*{0.3\textwidth}{0.26\textheight}{\includegraphics{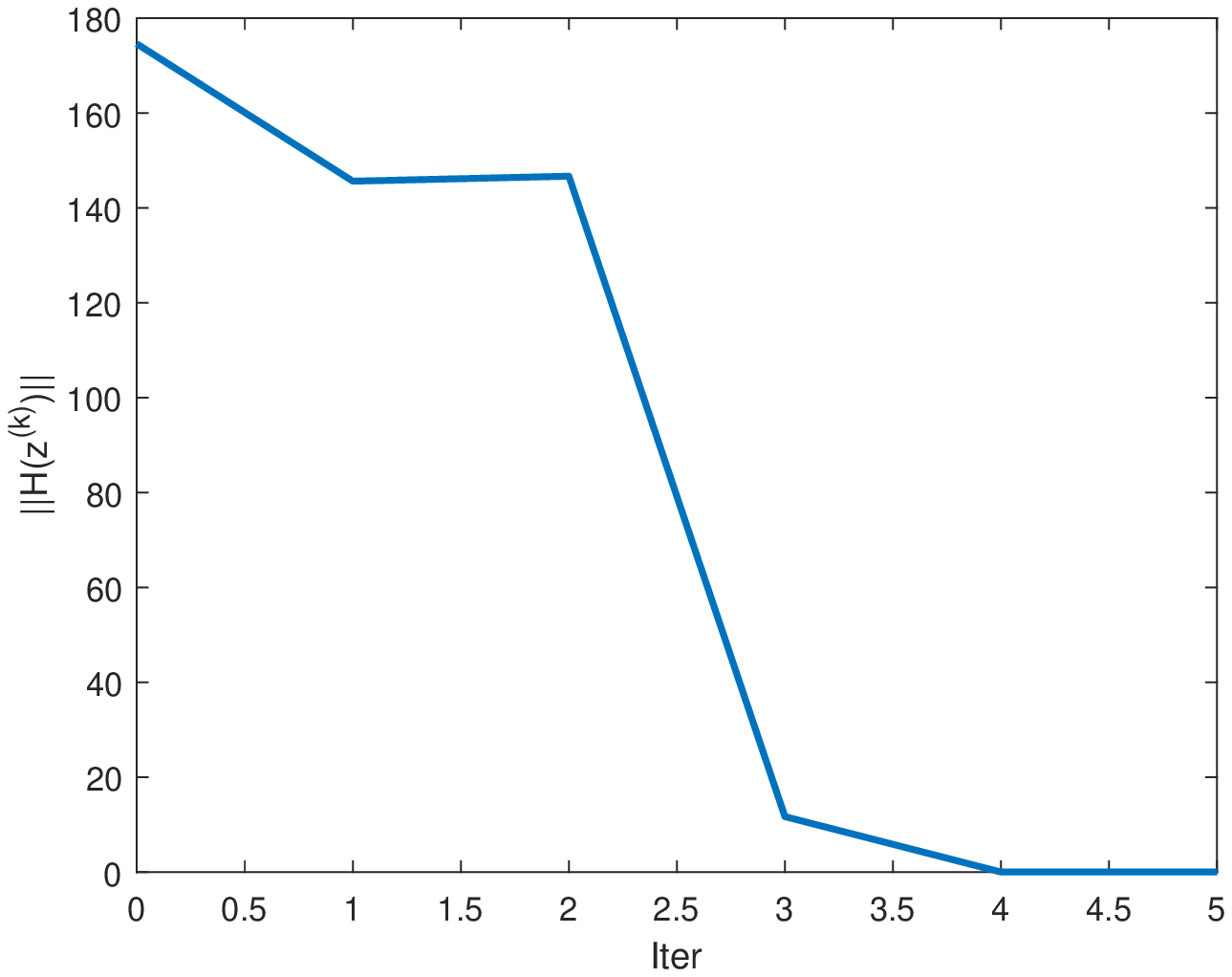}}
&\hspace{-0.4 cm}\resizebox*{0.4\textwidth}{0.26\textheight}{\includegraphics{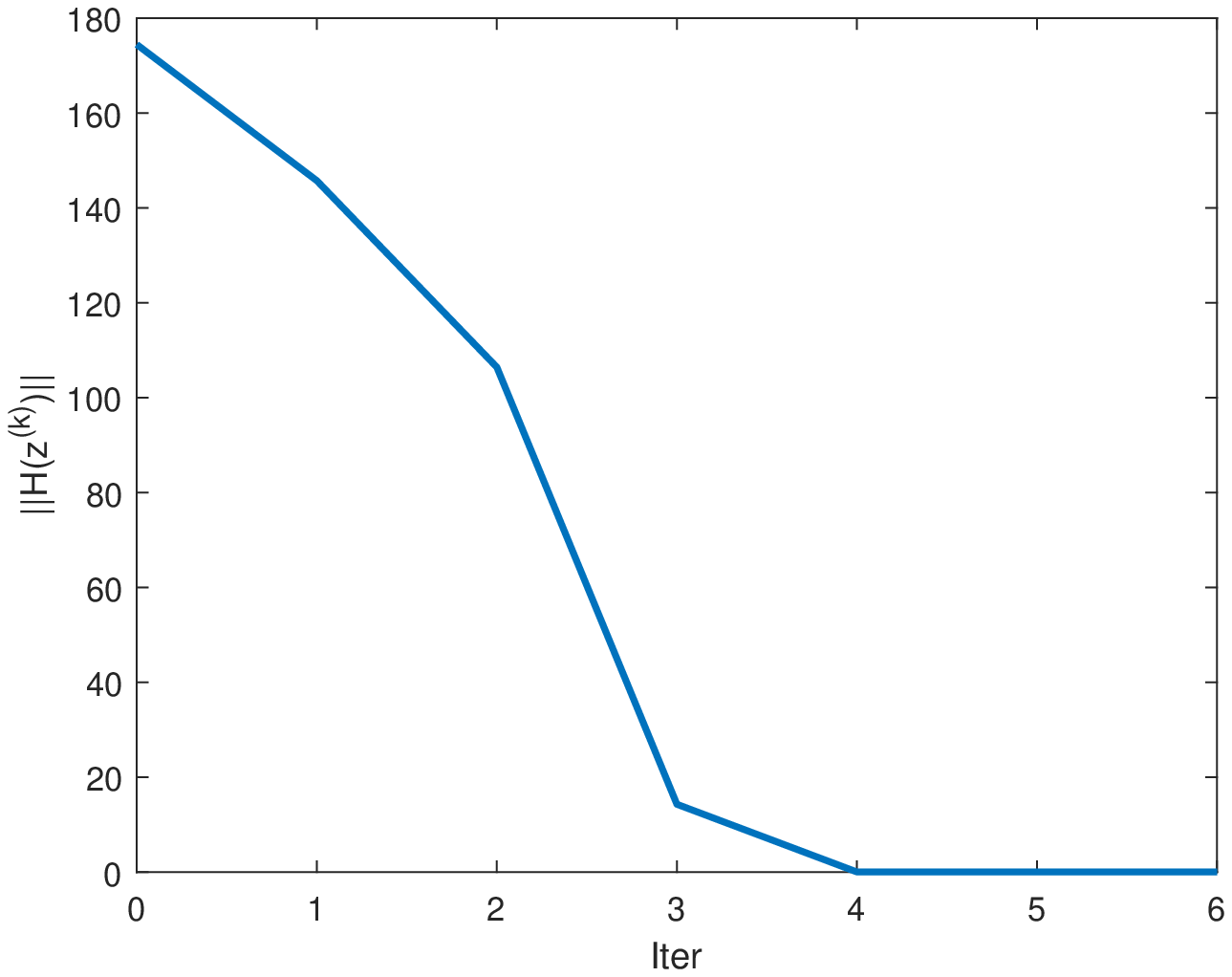}} & \hspace{-0.4 cm}
\resizebox*{0.3\textwidth}{0.26\textheight}{\includegraphics{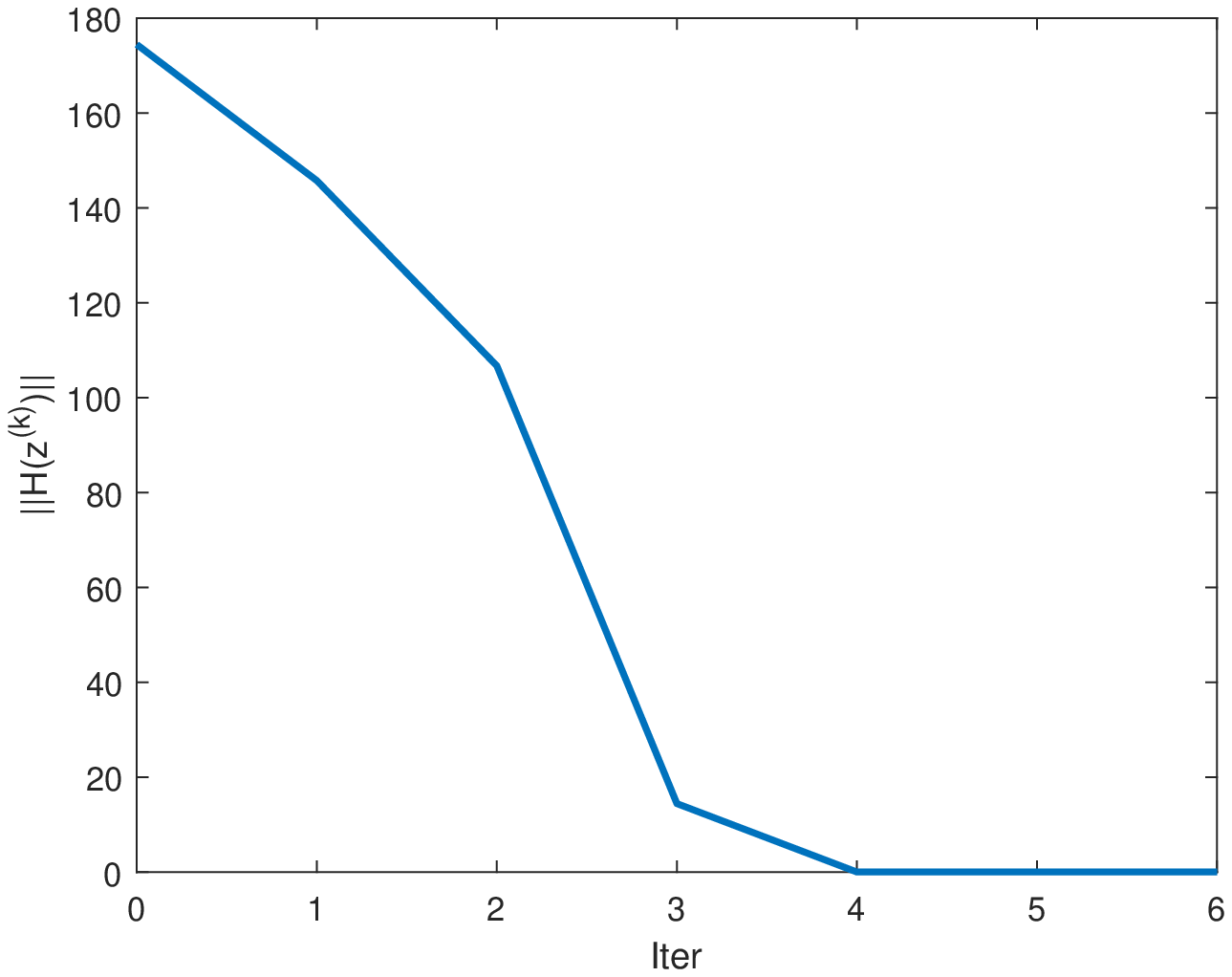}} \vspace{2ex}\\
\hspace{-0.3 cm}
\resizebox*{0.3\textwidth}{0.26\textheight}{\includegraphics{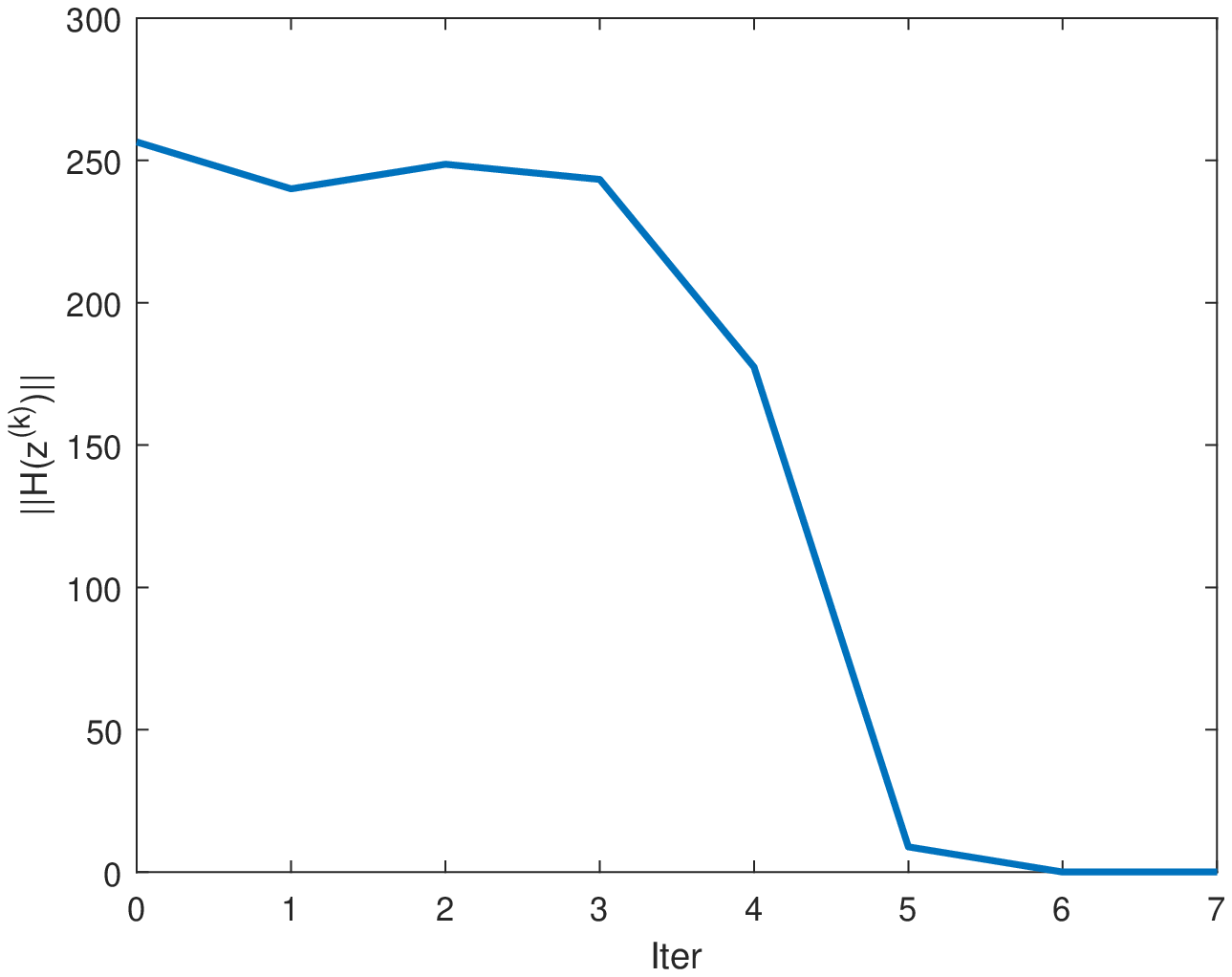}}
&\hspace{-0.4 cm}\resizebox*{0.4\textwidth}{0.26\textheight}{\includegraphics{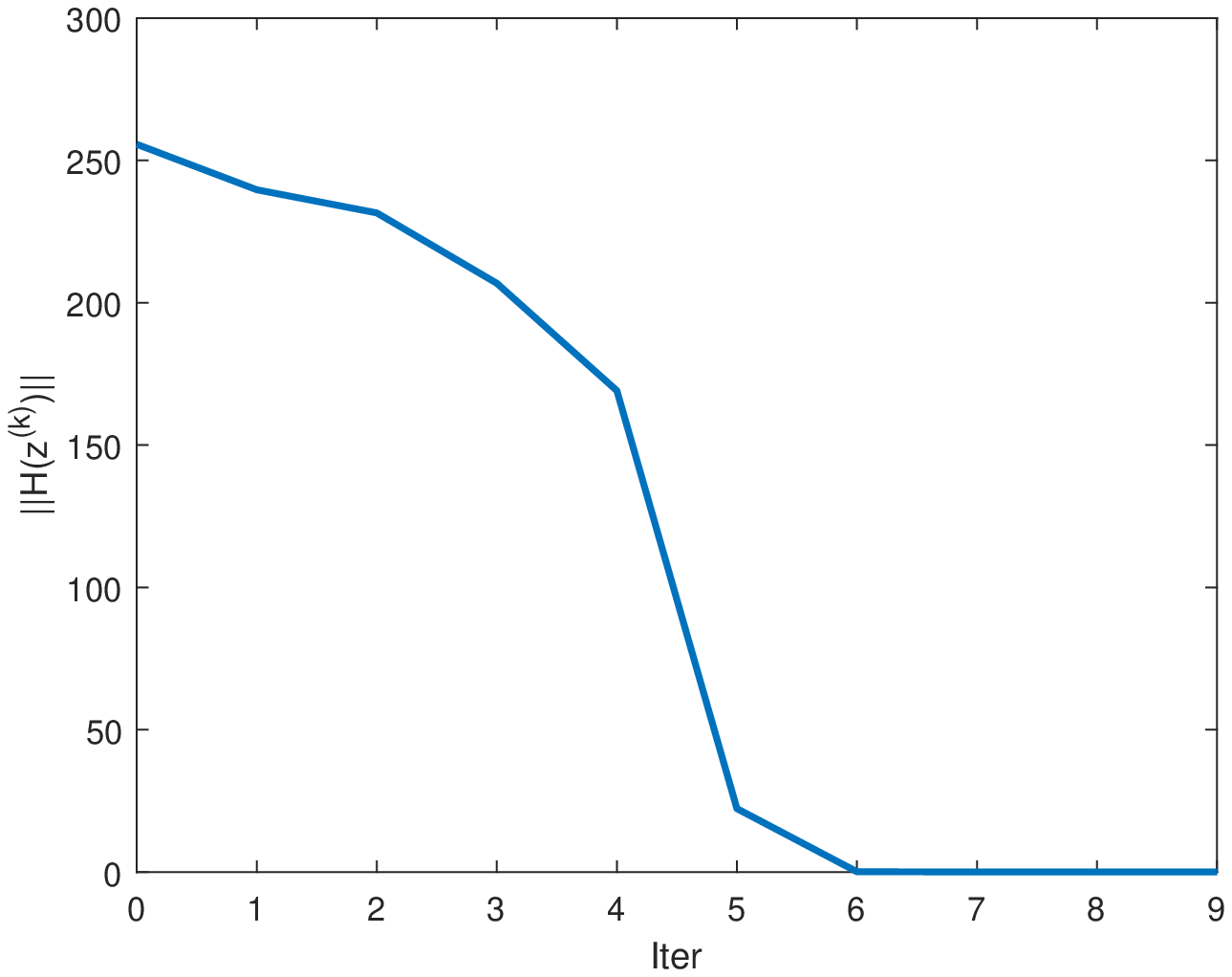}} & \hspace{-0.4 cm}
\resizebox*{0.3\textwidth}{0.26\textheight}{\includegraphics{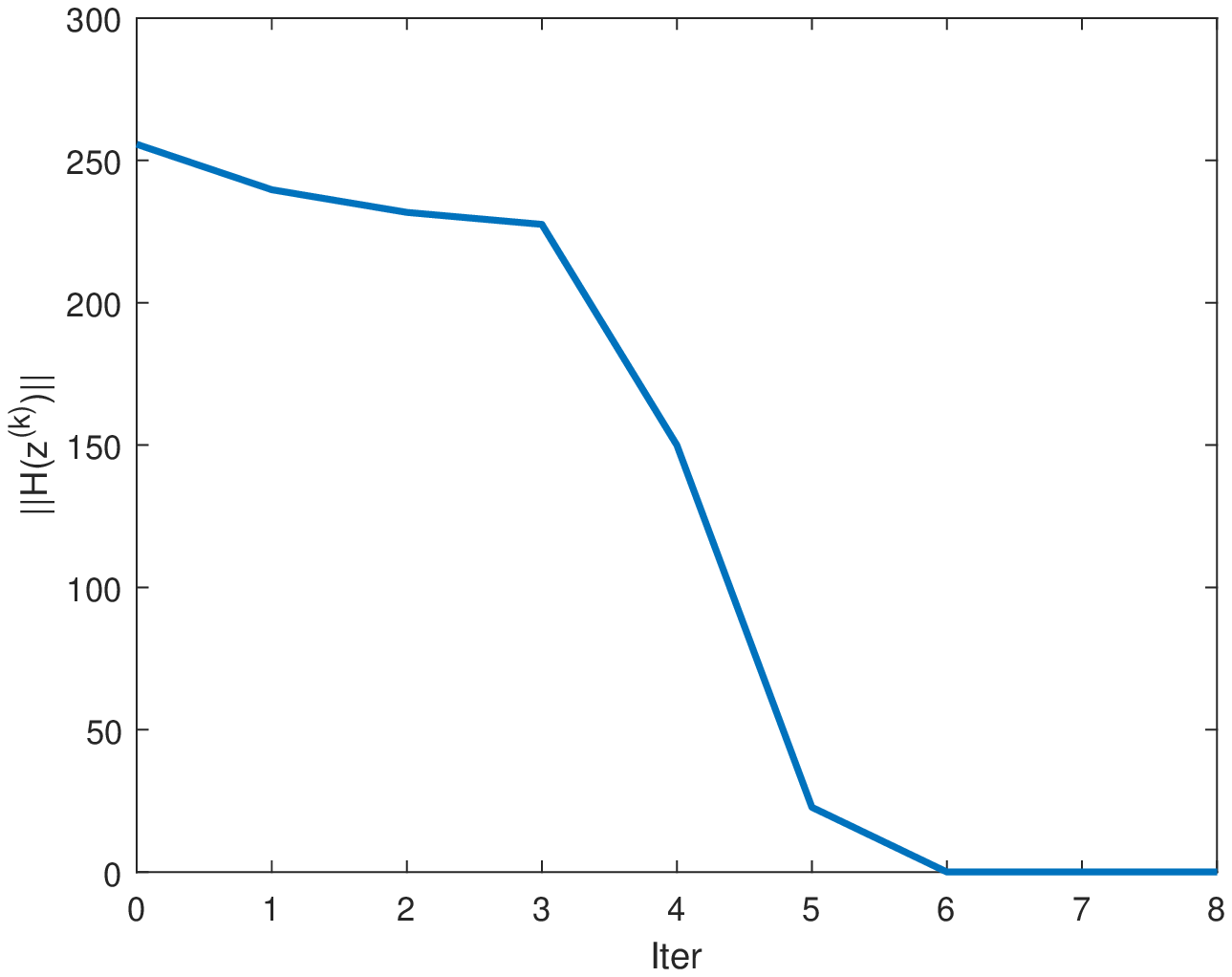}} \vspace{2ex}\\
\hspace{-0.3 cm}
\resizebox*{0.3\textwidth}{0.26\textheight}{\includegraphics{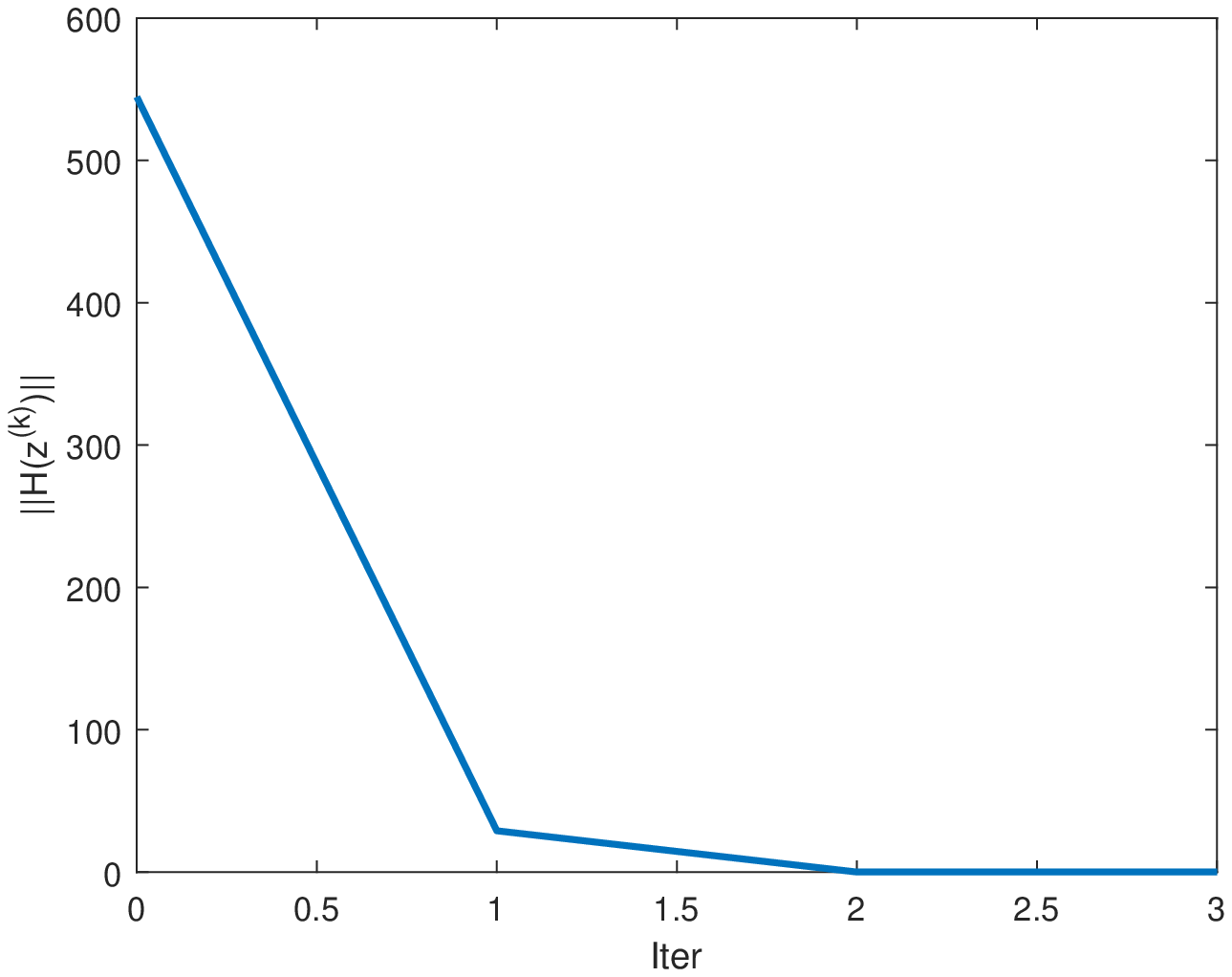}}
&\hspace{-0.4 cm}\resizebox*{0.4\textwidth}{0.26\textheight}{\includegraphics{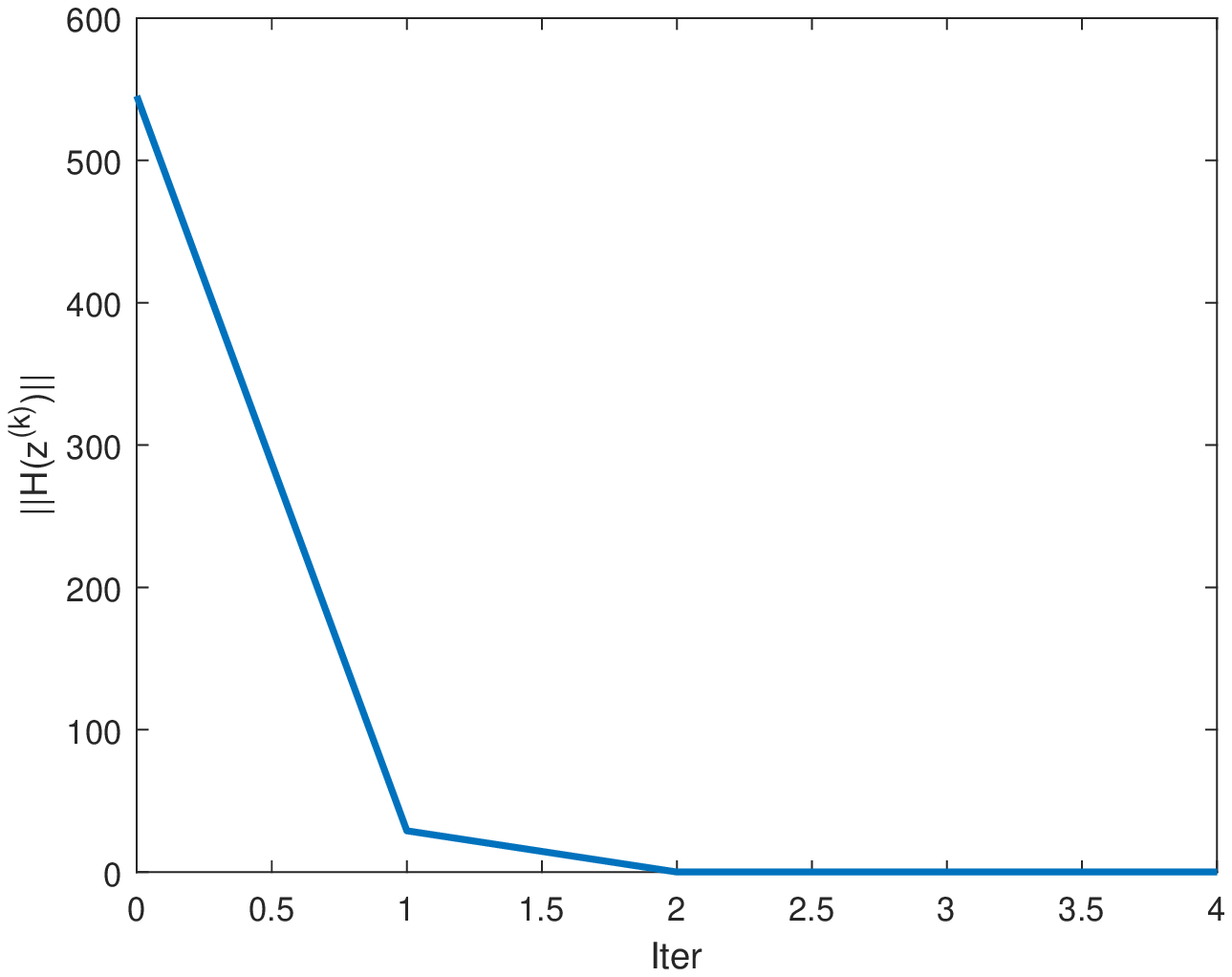}} & \hspace{-0.4 cm}
\resizebox*{0.3\textwidth}{0.26\textheight}{\includegraphics{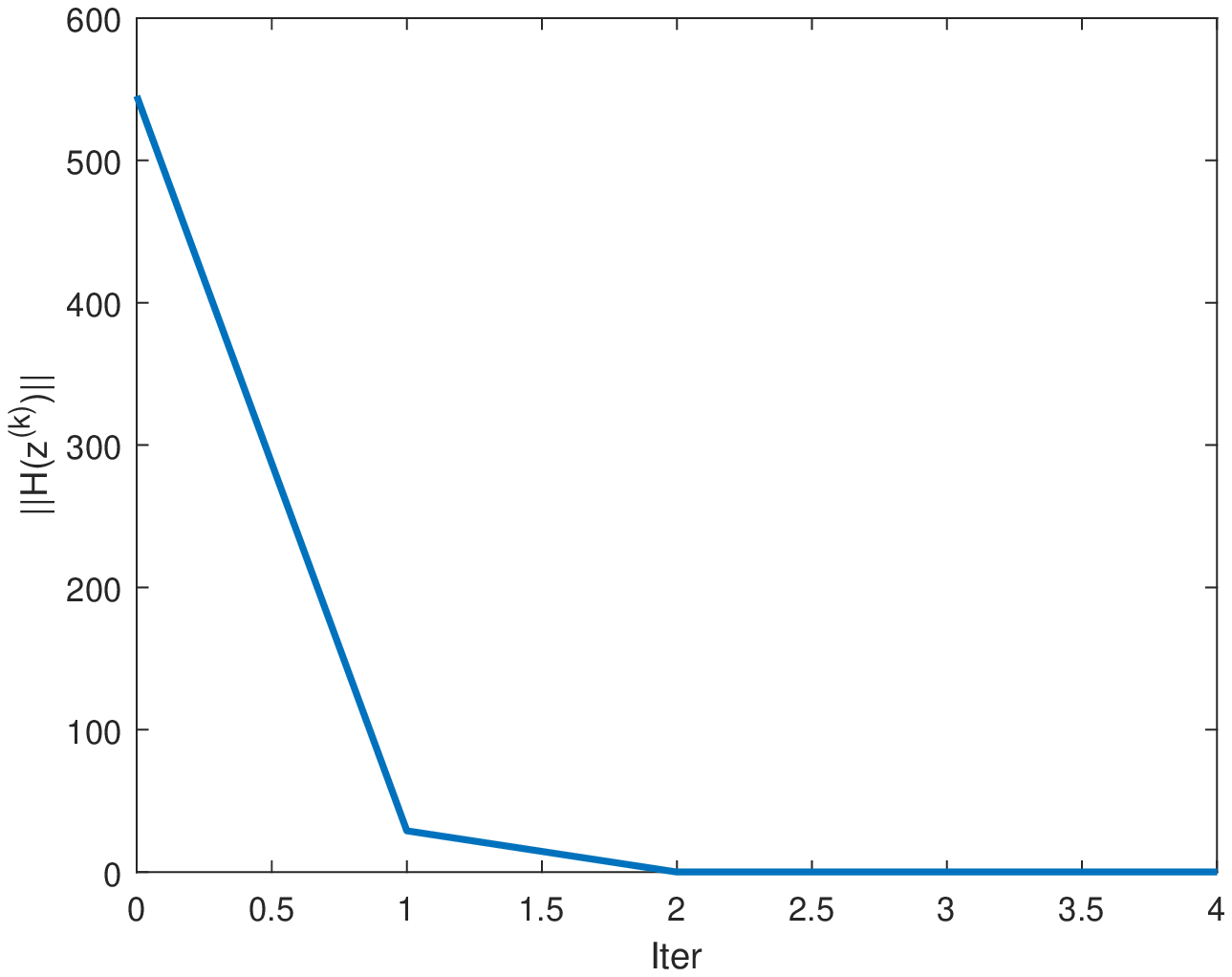}} \vspace{2ex}\\
\end{tabular}\par
}\vspace{-0.15 cm}
\caption{Convergence history curves for Example~\ref{exam:exam2} with $n = 32^2$. The plots in the first column are for NSNA, the plots in the second column are for JZ-MSNA and the plots in the third column are for TZ-MSNA, respectively. The plots in the first row are for $\xi = \zeta =0$, the plots in the second row are for $\xi = 0$ and $\zeta = 4$ and the plots in the third row are for $\xi = 4$ and $\zeta = 0$, respectively.
}
\label{Fig:Curves-for-P2}
\end{figure}

%%%%%%%%%%%%%%%%%%%%%%%%%%%%%%%%%%%%%%%%%%
\section{Conclusions}\label{sec:conclusion}
In this paper, a non-monotone smoothing Newton method is proposed to solve the system of generalized absolute value equations. Under a weaker assumption, we prove the global and the local quadratic convergence of our method. Numerical results demonstrate that our method can be superior to two existing methods in some situations.

%\section*{Acknowledgements}

%%%%%%%%%%%%%%%%%%%%%%%%%%%%%%%%%%%%%%%%%%%%

\end{document}